\definecolor{lightblue}{RGB}{0,190,255}
\newtheorem{theorem}{Theorem}[section]
\newtheorem{thm}[theorem]{Theorem}
\newtheorem{fact}[theorem]{Fact}
\newtheorem{prop}[theorem]{Proposition}
\newtheorem{lemma}[theorem]{Lemma}
\newtheorem{cor}[theorem]{Corollary}
\theoremstyle{definition}
\newtheorem{definition}[theorem]{Definition}
\newtheorem{claim}[theorem]{Claim}
\newtheorem{remark}[theorem]{Remark}
\newtheorem{example}[theorem]{Example}
\newcommand{\NN}{{\mathbb{N}}}
\newcommand{\RR}{{\mathbb{R}}}
\newcommand{\sub}{\subseteq}
\newcommand{\sN}[1]{_{#1\in \omega}}
\newcommand{\bi}{\begin{itemize}}
\newcommand{\ei}{\end{itemize}}
\newcommand{\bc}{\begin{center}}
\newcommand{\ec}{\end{center}}
\newcommand{\ES}{\emptyset}
\newcommand{\ex}{\exists}
\newcommand{\fa}{\forall}
\newcommand{\la}{\langle}
\newcommand{\ra}{\rangle}
\newcommand{\strcantor}{2^{ < \omega}}
\newcommand{\n}{\noindent}
\newcommand{\sss}{\sigma}
\newcommand{\omc}{L_{\omega_1, \omega}}
\newcommand{\lland}{\, \land \, }
\newcommand \seq[1]{{\left\langle{#1}\right\rangle}}
\newcommand\+[1]{\mathcal{#1}}
\newcommand{\wt}{\widetilde}
\newcommand{\ol}{\overline}
\newcommand{\lra}{\leftrightarrow}
\newcommand{\LR}{\Leftrightarrow}
\newcommand{\RA}{\Rightarrow}
\newcommand{\LA}{\Leftarrow}
\newcommand{\rapf}{\n $\RA:$\ }
\newcommand{\lapf}{\n $\LA:$\ }
\newcommand{\sssl}{\ensuremath{|\sigma|}}
\def \sq {\sqsubseteq}
\DeclareMathOperator \Th{Th}
  \DeclareMathOperator{\Aut}{Aut}
   \DeclareMathOperator{\GI}{GI}
\renewcommand{\S}{S_\infty}
\renewcommand{\hat}{\widehat}
\newtheorem{axiom}{Axiom}
\newcommand{\F}{\mathcal{F}} 
\newcommand{\G}{\mathcal{G}} 
\newcommand{\ran}{\mathrm{ran}}
\newcommand{\U}{\mathcal{U}}
\newenvironment{enumerate-(a)}{\begin{enumerate}[label={\upshape (\alph*)}, leftmargin=2pc]}{\end{enumerate}}
\newenvironment{enumerate-(a)-r}{\begin{enumerate}[label={\upshape (\alph*)}, leftmargin=2pc,resume]}{\end{enumerate}}
\newenvironment{enumerate-(A)}{\begin{enumerate}[label={\upshape (\Alph*)}, leftmargin=2pc]}{\end{enumerate}}
\newenvironment{enumerate-(A)-r}{\begin{enumerate}[label={\upshape (\Alph*)}, leftmargin=2pc,resume]}{\end{enumerate}}
\newenvironment{enumerate-(i)}{\begin{enumerate}[label={\upshape (\roman*)}, leftmargin=2pc]}{\end{enumerate}}
\newenvironment{enumerate-(i)-r}{\begin{enumerate}[label={\upshape (\roman*)}, leftmargin=2pc,resume]}{\end{enumerate}}
\newenvironment{enumerate-(I)}{\begin{enumerate}[label={\upshape (\Roman*)}, leftmargin=2pc]}{\end{enumerate}}
\newenvironment{enumerate-(I)-r}{\begin{enumerate}[label={\upshape (\Roman*)}, leftmargin=2pc,resume]}{\end{enumerate}}
\newenvironment{enumerate-(1)}{\begin{enumerate}[label={\upshape (\arabic*)}, leftmargin=2pc]}{\end{enumerate}}
\newenvironment{enumerate-(1)-r}{\begin{enumerate}[label={\upshape (\arabic*)}, leftmargin=2pc,resume]}{\end{enumerate}}
\subjclass{20A15,  03E15}
\begin{document}

%
 


 \title[Coarse groups and their applications]{Coarse groups, and the isomorphism problem for oligomorphic groups}
\author{Andr\'e Nies}
\author{Philipp Schlicht}
\author{Katrin Tent}

\thanks{The first and second author were partially supported by the Marsden fund of New Zealand, 13-UOA-184 and 19-UOA-346. 
This project has received funding from the European Union's Horizon 2020 research and innovation programme under the Marie Sk\l odowska-Curie grant agreement No 794020 (IMIC) of the second author. 
The second author was partially supported by FWF grant number I4039 during the revision of this paper. 
The third author was supported by the DFG, under CRC 878  and under Germany's Excellence Strategy -EXC 2044-, Mathematics M\"unster: Dynamics-Geometry-Structure. The authors gratefully acknowledge the input of  the referee who has made very insightful comments that improved the paper.
}

\noindent 
\address{A.\  Nies, School of Computer Science,  The University of Auckland, Private Bag 92019, Auckland 1142. \newline  \texttt{andre@cs.auckland.ac.nz}}

\address{P.\ Schlicht,  School of Mathematics, University of Bristol, Fry Building, Woodland Road, Bristol, BS8 1UG, and 
Mathematisches Institut, Universit\"at Bonn, Endenicher Allee 60, 53155 Bonn. 
\newline \texttt{schlicht@math.uni-bonn.de}}

\address{K.\ Tent, Mathematisches Institut, Einsteinstrasse 62, Universit\"at M\"unster, 48149 M\"unster. \newline \texttt{tent@wwu.de}}
\maketitle

\begin{abstract} Let $S_\infty$ denote  the topological group of permutations of the natural numbers.    A closed subgroup $G$ of $S_\infty$ is called \emph{oligomorphic}   if  for each $n$,  its natural action on  $n$-tuples of natural numbers has only finitely many orbits. 
 We study the complexity of the topological  isomorphism relation on the oligomorphic subgroups of $S_\infty$ in the setting of Borel reducibility between equivalence relations on Polish spaces.

 Given a closed subgroup $G$ of $S_\infty$, the  \emph{coarse group} $\mathcal M(G)$      is the    structure with domain the  cosets  of   open subgroups of $G$, and a     ternary relation $AB \sub C$. 
 This structure derived from $G$ was introduced   in \cite[Section 3.3]{Kechris.Nies.etal:18}.
    If $G$ has   only  countably many open subgroups, then  $\mathcal M(G)$ is a  countable structure.   Coarse groups form   our main   tool in studying such closed subgroups of $S_\infty$. We axiomatise them abstractly as structures with a ternary relation. For the oligomorphic groups,  and also the     profinite groups,     we set up a  Stone-type duality between the groups and the corresponding coarse groups. In particular we can recover an isomorphic copy of~$G$ from its coarse group in a Borel fashion.
 
 We use this duality to  show that the isomorphism relation for oligomorphic  subgroups of $S_\infty$   is Borel reducible to a  Borel equivalence relation with all classes countable.  We show that  the same upper bound applies to the larger class of  closed subgroups of $S_\infty$ that are  topologically isomorphic to oligomorphic  groups.
  \end{abstract}

%
%

\section{Introduction}
 \n  Let $\S$ denote the Polish  group of permutations of the natural numbers with the usual  topology of pointwise convergence.     The closed subgroups of~$\S$ (also called non-Archimedean groups) form a standard Borel space. All the   classes  of groups we consider will   be  Borel sets in this space that are invariant under conjugation by elements of $\S$.     (Details will be provided in Section~\ref{s:prelim}.) 
  
    Kechris and two of the authors~\cite{Kechris.Nies.etal:18}  determined the complexity of the topological isomorphism relation on certain classes  closed subgroups of $\S$. They  used the setting of Borel reducibility between equivalence relations $E$ and $F$ on Borel  spaces $X$ and $Y$, respectively: $E$ is Borel reducible to $F$, written   $E\le_B F$,  if there is  $f\colon X \to Y$ such that the preimage of any Borel set in $Y$ is Borel  in $X$, and $x_0 E x_1 \LR f(x_0) F f(x_1)$ for each $x_0, x_1 \in X$. See e.g.\ \cite{Gao:09} for background on Borel reducibility.

    In this paper, all topological groups will be separable and all  isomorphisms between them will be topological (that is, both the isomorphism and its inverse are continuous). One    result in Kechris et al.~\cite{Kechris.Nies.etal:18}  addresses the compact subgroups of $\S$; note that 
    these  are the    separable profinite groups. Their result 
states that the  isomorphism relation for compact  subgroups  of $\S$  is Borel equivalent to the isomorphism relation between  countable graphs. In particular, it is properly analytic.  

A closed subgroup $G$ of $\S$ is called \emph{oligomorphic} (see~\cite{Cameron:90}) if  for each positive natural number~$n$,  its canonical  action on $\omega^n$, the set of $n$-tuples of natural numbers,  has only finitely many orbits (these will be  called $n$-orbits). 
Note that this is not a group theoretic property; rather, it  depends on the group 
action  and hence on the embedding of the group into $S_\infty$. 
The oligomorphic groups  are precisely the automorphism groups of $\omega$-categorical structures with domain the set of natural numbers.   They are, in a sense, opposite to compact subgroups of~$\S$, which are characterised by the condition that for each $n$, each $n$-orbit is finite. For background on oligomorphic 
groups we refer the reader to~\cite{Cameron:90},  and also to Tsankov~\cite{Tsankov:12}.

We  show that the isomorphism relation between   oligomorphic groups is far below graph isomorphism: it is Borel reducible to a Borel equivalence relation with all classes countable. This property of an equivalence relation on a Polish space  is called ``essentially countable".

 Closed subgroups of $\S$ that are isomorphic to oligomorphic groups will be called  \emph{quasi-oligomorphic}. Near the end of the paper we will show that this class is Borel, and   that  the  same upper bound on the isomorphism relation also applies to  this   class. 

While oligomorphic and compact subgroups of $\S$ are at opposite ends of the spectrum, they have   a common superclass.
 A  Polish group $G$ is called \emph{Roelcke precompact} if for every neighborhood of the identity $U$, there exists a finite set $F\subseteq G$ such that $G = UFU$. In other words, the  equivalence relation $ \sim_U =  \{\la x,y \ra \colon \exists u,v \in U \, uxv= y \} $ has only  finitely many equivalence classes. Roelcke precompactness of   closed subgroups of $\S$ is a Borel property as noted in~\cite{Kechris.Nies.etal:18}. 
It is  well-known that \emph{every  Roelcke precompact   group $G$ has only countably many open subgroups.} (To see this, let $U_n$ denote the pointwise stabiliser of $\{0, \ldots, n\}$ in   $G$. Each open subgroup $U$ of $G$ contains a group  $U_n$, and hence is a finite union of $\sim_{U_n}$ classes.  So there are only countably many possibilities for $U$.)

  Figure~\ref{fig:diagram} summarises the Borel reductions between isomorphism relations obtained in the earlier reference~\cite{Kechris.Nies.etal:18}  and  the present paper. The wavy arrows indicate known Borel reductions; unreferenced arrows are trivial ``identity'' reductions given by the inclusion of Borel classes.

It is well known that there are uncountably many non-isomorphic oligomorphic groups. For instance, Evans and Hewitt \cite[Lemma 3.1 and its proof]{Evans.Hewitt:90} show that each profinite group $K$  is isomorphic to a group of the form $\Sigma/\Phi$, where $\Sigma$ is an oligomorphic group and $\Phi$ is the intersection of all its  open subgroups of finite index. (In  their construction, one can see how $\Sigma $ depends  on the way $K$ is presented as a subgroup of $\S$.  So, one does not obtain  $\Sigma$ from $K$ through   a Borel function that preserves isomorphism; note that this would contradict our   result that isomorphism of oligomorphic groups is essentially countable.) 
Alternatively, there are uncountably many pairwise non-isomorphic 
automorphism groups of Henson digraphs~\cite{Henson:72}: 
if $\Aut(G)$, $\Aut(H)$ are isomorphic automorphism groups of Henson digraphs, then  
they are conjugate by \cite[Example 1 in Section 3, Theorem 2.2 \& Theorem 3.2]{Rubin:94}, 
thus inducing an isomorphism $G\cong H$ or $G \cong H^{-1}=\{(x,y)\mid (y,x)\in G\}$ by ultrahomogeneity of the digraphs. 

 We leave open the question whether there is a lower bound for $\cong_{\text{oligomorphic}}$ that is higher  than   the identity on $\RR$. 
 This question may have a negative answer when we require in addition that the signature of the corresponding canonical structures is finite up to interdefinability (see Subsection 1.3).

\begin{figure} \label{fig:diagram}

\[ \xymatrix{    & { \GI} & \\   
E_\infty\ar@{~>}[ur]^{<_B} & \cong_\text{Roelcke precompact}\ar@{~>}[u]^{\equiv_B}_{\text{\cite[Thm.\  3.1(ii)]{Kechris.Nies.etal:18}}} & \\    
\cong_\text{quasi-oligomorphic}\ar@{~>}[ur]^{\le_B}\ar@{~>}[u]_{\le_B}^{\text{present paper}}    &   & \cong_\text{compact}\ar@{~>}[ul]_{\equiv_B}   \\ 
   \cong_\text{oligomorphic}\ar@{<~>}[u]_{\equiv_B}^{\text{present paper}}   & &  {\GI}\ar@{~>}[u]^{\equiv_B}_{\text{\cite[Thm.\  4.3]{Kechris.Nies.etal:18}}}} \]

%
   
\caption{\small Borel reductions between isomorphism relations. $E_\infty$~denotes a  $\le_B$-complete countable Borel equivalence relation.
$\GI$~denotes isomorphism of countable graphs, which  is $\le_B$-complete for orbit equivalence relations given by continuous actions of $\S$.}
\end{figure}
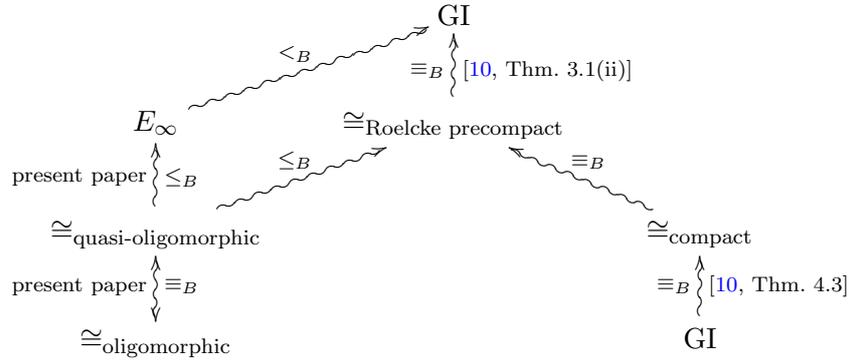

 \subsection{The coarse group $\+ M(G)$ associated with $G$} \label{s:CG}
  Coarse groups were  introduced by Kechris, Nies and Tent \cite[Section 3.3]{Kechris.Nies.etal:18}, in order to provide an alternative proof of their main result, that there are Borel reductions of  the  isomorphism relation for  Roelcke precompact groups, and  also for totally disconnected locally compact (t.d.l.c.)   groups,  to the isomorphism relation between    graphs with domain $\omega$. 
  
  First we recall  a few  preliminaries. A Polish group  is isomorphic to a   closed subgroup of $\S$ if and only if its  neutral element has a neighbourhood basis consisting of open subgroups; see e.g.~\cite[Thm.\ 1.5.1]{Becker.Kechris:96}. Note that each   left coset $aU$ of an open subgroup $U$ is also a right coset of the open subgroup $aUa^{-1}$. We will use the term \emph{open coset} for some coset, left or right, of an open subgroup. $G$ will usually denote a closed subgroup of~$\S$. The open cosets in $G$  form a left,  and also right,  translation invariant base for the subspace  topology on $G$.   We will use letters $A,B,C,D$ to denote open cosets.

 The domain of the  coarse group $\+ M(G)$ associated with $G$ consists of  the open cosets. Instead of the binary group operation,   it  has a  ternary relation $AB \sub C$. If $xy = z$ in $G$ then by continuity, for each $C \ni z$ there are $A \ni x $ and $B \ni y$ such that $AB \sub C$. So this ternary relation  approximates the group operation. 
       Kechris, Nies and Tent \cite[Section 3.3]{Kechris.Nies.etal:18} assigned   to a Roelcke precompact group $G$   in a canonical, Borel   way  an isomorphic  copy   of the structure $\+ M(G)$ with domain the natural numbers, and  showed that  
 for Roelcke precompact closed subgroups $G,H$ of $\S$, one has
\begin{equation} \label{eqn:KNS} G \cong H \LR \+ M(G) \cong \+ M(H). \end{equation}
Since the coarse groups can be assumed  to  have domain $\omega$, by standard coding techniques this implies that isomorphism of Roelcke precompact  groups is Borel reducible to isomorphism of countable graphs.

 In Section~\ref{s:coarse} we will axiomatise the basic properties of an abstract coarse group $M$. We provide  axioms that govern  subgroups, inclusion, and allow us to  define an operation $B= A^\diamond$ approximating the inverse operation in a group. We introduce  the filter group $\+ F(M)$, which consists of the filters that contain a (unique) coset of each subgroup. Our main interest is  in the case that $M$ is countable, in which case we show that $\+ F(M)$ is a Polish totally disconnected  group. If $G$ has  only countably many open subgroups (such as when $G$ is Roelcke precompact), then we can recover $G$ from its coarse group in the sense that $G \cong \+ F (\+ M(G))$.

%

\subsection{Borel duality of  oligomorphic groups with their coarse groups} \label{rem:plan} 
Let $\+ B$ be  the closure  under isomorphism of the  range of the operator $\+ M$ on 
the class of oligomorphic groups. 
Theorem~\ref{thm:main} will show that  $\+ B$  is Borel, and that   on $\+ B$ one can define  a Borel   operator $\+ G$       that is an   ``inverse up to isomorphism"  of   $\+ M$, in the sense that \bi \item  $\+ G( \+ M(G)) \cong G$ for each oligomorphic $G$, and \item  $\+ M( \+ G(M)) \cong M$ for each $M \in \+ B$.    \ei
 Using (\ref{eqn:KNS}) it follows that \bc  $ M \cong N \LR \+ G(M) \cong \+ G(N)$. \ec So,  in a Borel  fashion  we can ``interchange'' oligomorphic groups   with  their coarse groups, which are  countable structures. 
 
Theorem~\ref{thm:main} will  be stated via a  notion introduced in \cite{Friedman.MottoRos:11} as \emph{classwise Borel isomorphism}; also see   \cite[Def. 2.1]{MottoRos:12}. (We have slightly adapted the terminology here to avoid using ``isomorphism" in two different ways in the same sentence.)  Recall that a standard Borel space \cite[Section 12B]{Kechris:95} consists of an uncountable  set $Y$ together with the Borel sets given by a Polish topology on $Y$.  Given an  uncountable Borel subset $B$ of a Polish space, its Borel subsets  induce a standard Borel space on $B$; in particular, the set $\+ B$ above carries the structure of such a space.

\begin{definition}  \label{def:CWB}
Equivalence relations $E$ and $F$ on standard Borel   spaces $X$ and $Y$, respectively, are called \emph{classwise Borel bireducible} if there are Borel reductions $F\colon X\rightarrow Y$ of $E$ to $F$ and $G\colon Y \rightarrow X$ of $F$ to $E$ such that their factorings $\hat{F}\colon X/E \rightarrow Y/F$ and $\hat{G}\colon Y/F \rightarrow X/E$ to the quotient spaces are bijections and satisfy $\hat{F}=\hat{G}^{-1}$. 
\end{definition}

A main motivation for  this notion comes from duality theorems, such as Stone's. By a Stone space we mean a compact,  totally disconnected  topological  space. Stone duality sets up an correspondence between such spaces and    Boolean algebras. This restricts to a duality between separable Stone spaces and countable Boolean algebras.
 
 While a duality theorem merely requires that  the objects are  given up to isomorphism, here we are interested in  Borel versions.  We   assume that the objects  are concretely given as points in standard Borel spaces.  For instance, a  Borel version of   Stone duality is as follows:   we   assume that an  infinite Stone spaces  is    given as the set of paths $[T]$ through a subtree $T$ of $\strcantor$ with infinitely many paths and without dead ends, and that the Boolean algebras have domain $\omega$.
 Abstractly, for an infinite Stone space $G$, one  lets $\+M(G)$ be the Boolean algebra of clopen sets in $G$; for each countably infinite  Boolean algebra $M$, one lets $\+ G(M)$ be the  Stone space  of ultrafilters of $M$.  If the objects are concretely represented, these maps become Borel: 
  given  a tree~$T$ such that $G= [T]$, one  can in a Borel fashion produce a listing without repetition of the clopen sets of  the space~$[T]$, and hence determine $\+ M(T)$. Given a Boolean algebra~$M$ with domain $\omega$, one  lets $\+G(M)$ be the tree  of strings $\sss \in \strcantor$ describing a nonzero conjunction $r_\sss$ of literals (for $i<\sssl$ let $l_i = i $ if $\sss(i)=1$ and $l_i= \lnot i$ otherwise; let  $r_\sss= \bigwedge_{i<n} l_i$). Clearly    the conditions above for classwise Borel bireducibility hold.
  
  Our Borel version of duality between classes $\+ A, \+ B$ of (possibly topological) structures requires that  both $\+ A$ and $\+ B$ are isomorphism invariant Borel sets of the canonical Polish spaces for such structures. In Section~\ref{ss:SBB compact} we   obtain a duality between  the class $\+ A$ of profinite groups and an appropriate class $\+ B$ coarse groups.  The result of Kechris, Nies and Tent \cite[Section 3.3]{Kechris.Nies.etal:18} (also see Prop.\ \ref{fact:standard}) on Roelcke precompact groups mentioned above is not yet such a duality result, because   one   needs to show that the closure under isomorphism of the range of the operator $\+ M$ defined on such groups is Borel, and also needs to define a Borel inverse $\+ G$ up to isomorphism. This is indeed possible for the Roelcke precompact groups, and will be carried out in a forthcoming paper of Melnikov and the first author. It is not known whether  the larger class of closed subgroups of $\S$ that have only countable many open subgroups is Borel, so at this stage one  can't expect a Borel duality result here.

 We note that  a common way of formalizing abstract duality theorems is via the  notion of  equivalence of categories, where the operators (now functors) also turn isomorphisms into isomorphisms. A Borel version of category equivalence can be obtained for the profinite, and even the Roelcke precompact, groups, but is   unknown for the oligomorphic groups. The construction in Section~\ref{s: turning} of the inverse map  $\+ G$ is not uniform in that sense, because one has to pick an appropriate element $W$ of the coarse group in a Borel fashion (see Section~\ref{ss:upperbound} below). This  choice   is not necessarily unique. If one also wanted to  turn  any isomorphism between coarse groups into an isomorphism of the corresponding oligomorphic groups,   one would need to make choices of elements in the coarse groups that are matched by the given isomorphism, which is only possible if the elements are uniquely determined.   In contrast, the weaker formalization of Borel duality given by Definition~\ref{def:CWB} works.
 \subsection{The upper bound on the complexity of isomorphism} 
  \label{ss:upperbound}  \label{s:bi}
 Once Theorem~\ref{thm:main} is established,  we will  show that isomorphism of oligomorphic groups is Borel reducible to a countable Borel equivalence relation. We apply   a result of Hjorth and Kechris \cite[Theorem 4.3]{Hjorth.Kechris:95} about  Borel invariant classes~$\+ C$ of countable structures that will be explained in more detail at the beginning of Subsection~\ref{ss:EC}.  
 An important further ingredient (just alluded to above) is that  each   oligomorphic group  $G$ has an open subgroup $W$ such that the left translation action of  $G$ on the left cosets of $W$ is  oligomorphic, and yields a topological embedding of $G$ into $\S$. ($W$ is simply the intersection of the stabilisers of  finitely many numbers chosen to represent the 1-orbits; see Lemma \ref{canonical oligomorphic action}.)
   We   thank Todor Tsankov for communicating this fact to us.  
%

We mention here that there is  an  alternative way  to obtain  the upper bound on isomorphism of oligomorphic groups  from Theorem~\ref{thm:main}: via  bi-interpretability of $\omega$-cate\-gorical structures. 
 To an oligomorphic group  $G$ one can in a Borel  way assign a structure $N_G$ with domain $\omega$ such that $G= \Aut(N_G)$: the language has $k_n$ many $n$-ary relation symbols $P^n_i$, where $k_n$ for $n\ge 1$ is the number of $n$-orbits of $G$, and $P^n_i$ denotes in $N_G$  the $i$-th $n$-orbit.  Coquand (unpublished), see Ahlbrandt and Ziegler \cite{Ahlbrandt.Ziegler:86},   showed  that oligomorphic groups $G$,  $H$ are topologically isomorphic if and only if $N_G$ and $N_H$ are bi-interpretable in the sense of model theory (e.g.\ Hodges \cite[Section 5.3]{Hodges:93}); also see    \href{http://wwwf.imperial.ac.uk/~dmevans/Bonn2013_DE.pdf}{David Evans' 2013  notes}.
 
 One can show that bi-interpretability of $\omega$-categorical structures   is a  $\mathbf \Sigma^0_2$ relation. Now one   applies  a related result of Hjorth and Kechris in the same paper \cite[Theorem 3.8]{Hjorth.Kechris:95},  by which the existence of a Borel reduction of $\cong_\+ B$  to a $\mathbf \Sigma^0_2$ equivalence relation implies that $\cong_\+ B $ is essentially countable. 

 We don't follow this pathway because the formal details   would be very  tedious, while after our proof of  Theorem~\ref{thm:main}   not too much extra effort  is required to  satisfy  the hypothesis of \cite[Theorem 4.3]{Hjorth.Kechris:95}. For some details on the alternative  approach see our Logic Blog post~\cite[Section 8.5]{LogicBlog:18}.
 
%

\subsection{Preliminary: the Effros space}
  \label{s:prelim}


Given a Polish space $X$, let $\+ E(X)$ denote  the set of closed subsets of $X$.   The \emph{Effros Borel space} on $X$  is the standard Borel space consisting of  $\+ E(X)$  together with  the $\sigma$-algebra   generated by the sets \bc $\+ C_U = \{ D \in \+ E(X) \colon D \cap U \neq \ES\}$,  \ec for open $U \sub X$.  For details, see e.g.\ \cite[Definition 1.4.5]{Gao:09}.

It is not hard to see that in $\+ E(\S)$, the properties of being a  (closed) subgroup of $\S$, and of being an oligomorphic group, are  Borel. For the former see \cite[Lemma 2.5]{Kechris.Nies.etal:18}. For the latter, note that a closed subgroup $G$ is oligomorphic if and only if for each $n$, there is $k$  such that 

\bc $\ex x_1, \ldots, x_k \in T_n \fa y \in T_n   \bigvee_{1 \le i \le n } G \cap U_{x_i, y }   \neq \ES$, \ec
where $T_n$ is the set of $n$-tuples of natural numbers without repetitions, and $U_{x,y}$ for $x, y \in T^n$ is the open set of permutations $f$ such that $f(x(r) ) = y(r) $ for each $r < n$.





\section{Coarse groups}
  
\label{s:coarse}
In this section we study coarse groups, defined in Section~\ref{s:CG} above, and  in particular isolate some of their  properties by formulating  axioms. This 
leads to 
an abstract axiomatisation 
of coarse groups of interesting classes of groups. 
We let $M$ always denote a coarse group. 
In our applications   we will only consider the case that $M$ is  countable.   We introduce the filter group $\+ F(M)$ as a step towards recovering closed subgroups of $\S$ from their approximative countable coarse groups.

\subsection{Basic definitions and axioms}  
 
 Throughout, $M$ will denote a structure for the signature with a ternary relation symbol $R$ which will describe a coarse group. We now give the axioms for such a   structure,  and we will always assume that $M$ satisfies them.

For $A, B, C \in M$, the ternary relation $R(A,B,C)$ will more suggestively be written   as ``$A B\sqsubseteq C$". Since we think of elements of $M$ as cosets of open subgroups, 
we use group theoretic terms marked with an asterisk.   Thus, for instance, we will refer to the elements of $M$ as $^*$cosets or $^*$subgroups. Letters $A, B, C, \ldots $ denote general elements of $M$.  

\begin{definition}[Some definable relations]  
\label{df:main}  \mbox{}
 \begin{enumerate} \item[(a)] $A \in M$ is a $^*$subgroup  if $AA \sqsubseteq A$. Letters $U,V,W $ denote $^*$subgroups in $M$. 
We write $U \sqsubseteq V$ for $UV \sub V$. 

\item[(b)]   
 $A$ is a left $^*$coset of a $^*$subgroup $U$ if and only if $U$ is the 
largest subgroup under $\sqsubseteq$ with $AU \sqsubseteq A$. ($U$ is 
unique by definition.) 
We define right $^*$cosets  of $U$ analogously. We write $LC(U)$ for the set of left $^*$cosets and $RC(U)$ for the set of right $^*$cosets of a $^*$subgroup $U$. 

\item[(c)] We write $A\sqsubseteq  B$ for arbitrary   $A,B$ if  $AU \sqsubseteq B$ for some $^*$subgroup $U$ such that  $A$ is a left $^*$coset for $U$. ($U$ exists, and is unique, by  Axiom \ref{axiom basics}(b).) 
\end{enumerate}
\end{definition} 

\setcounter{axiom}{-1} 
\begin{axiom} (Basic axioms)  \label{axiom basics}
\label{basic axioms} 
 \begin{enumerate}
 \item[(a)] The relation $\sqsubseteq$ on $^*$subgroups is a partial order so that any two elements have a meet.

\item[(b)]   Every element  is   a left $^*$coset of some $^*$subgroup and a right $^*$coset of  some $^*$subgroup.
 
\item[(c)]   The  relation  $\sqsubseteq$  in Definition~\ref{df:main}(c) is a partial order that extends $\sqsubseteq$ on the set of $^*$subgroups. 
 \end{enumerate}
\end{axiom}

\begin{axiom}  \label{ax:order} (Monotonicity) 

\n  
 If $B_0 B_1\sqsubseteq C$ and $A_i\sqsubseteq B_i$ for $i\leq1$, then $A_0 A_1\sqsubseteq C$. 
\end{axiom}

  We say that~$A$,$B$ in $M$ are   \emph{disjoint} if $\neg \exists C\, (C\sqsubseteq A \wedge C\sqsubseteq B)$.\footnote{Disjointness can be expressed via the ternary relation $\sqsubseteq$ as $\neg \exists C \exists D\,   (C D\sqsubseteq A \wedge CD\sqsubseteq B)$; this follows from Axioms \ref{axiom basics}, \ref{ax:order}, \ref{axiom disjointness of cosets} and \ref{axiom products of left and right cosets}.} 

\begin{axiom} \label{axiom disjointness of cosets} Suppose that  $U' \sqsubseteq U$ and 
$A'\in LC(U')$. 

\n (a) There is $A \in LC(U)$ such that $A' \sqsubseteq A$.

\n  (b) If  $A\in LC(U)$, 
then $A' \sqsubseteq A$,  or $A'$ and $ A$ are disjoint.  In particular, any two distinct  left $^*$cosets  of the same $^*$subgroup    are disjoint.   

Similar statements  holds for right $^*$cosets. 
\end{axiom}

\begin{remark} \label{rem:defs} {\rm  \cite[Section 3.3]{Kechris.Nies.etal:18}.  For a closed subgroup  $G$ of $\S$,  the terms introduced above have their intended meanings in the structure $\+ M(G)$, and the axioms specified so far are satisfied. }
\end{remark}

\begin{axiom} \label{axiom cosets for smaller subgroups} 
Let  $U$ and $V$ be  $^*$subgroups and $B\in LC(V)$.  Then 
 \bc $U\sqsubseteq V$ $\LR$
 there is $A\in LC(U)$ with $A\sqsubseteq B$. \ec
A similar statement holds for right $^*$cosets. 
\end{axiom} 
\n To see that this axiom holds in $\+ M(G)$, let $B=bV$. If $U\sqsubseteq V$, then $A=bU\sqsubseteq bV$. Conversely, suppose that  $A=aU \sqsubseteq B$. Then   $b^{-1}aU\sqsubseteq V$. So $a^{-1}b\in V$, and hence  $U\sqsubseteq a^{-1}bV=V$.  

\smallskip
Each $A\in M$ contains a left  $^*$coset of an arbitrarily ``small" $^*$subgroup $V$. 

\begin{claim} \label{axiom existence of full filters} 
\

\n For each   $A\in M$ and each $^*$subgroup $U$, there are  a $^*$subgroup $V\sqsubseteq U$ 
and a left $^*$coset $B$ of $V$ 
such that $B\sqsubseteq A$. A similar fact holds for right $^*$cosets. 
\end{claim}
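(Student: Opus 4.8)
```latex
\begin{proof}[Proof proposal]
The plan is to fix $A \in M$ and a $^*$subgroup $U$, and produce a $^*$subgroup $V \sqsubseteq U$ together with a left $^*$coset $B$ of $V$ with $B \sqsubseteq A$. First I would use Axiom~\ref{basic axioms}(b) to obtain the $^*$subgroup of which $A$ is a left $^*$coset; call it $U_A$, so $A \in LC(U_A)$. The natural candidate for $V$ is the meet $V := U \sqcap U_A$, which exists by Axiom~\ref{basic axioms}(a), and which satisfies $V \sqsubseteq U$ as required. The remaining task is then to find the promised left $^*$coset $B$ of $V$ lying below $A$.

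To locate $B$, I would apply Axiom~\ref{axiom cosets for smaller subgroups} in the direction $U \sqsubseteq V \RA \exists A' \in LC(U)\, (A' \sqsubseteq B)$, but with the roles adjusted: since $V \sqsubseteq U_A$ and $A$ is a left $^*$coset of $U_A$, the axiom (with $U_A$ in the role of the larger subgroup, $V$ in the role of the smaller, and $A$ in the role of the distinguished coset $B$) yields a left $^*$coset $B$ of $V$ with $B \sqsubseteq A$. This is exactly the desired conclusion, so the left-coset case reduces cleanly to the already-established Axiom~\ref{axiom cosets for smaller subgroups}. The right-coset case follows by the symmetric application, invoking the ``similar statement'' clauses that each axiom carries for right $^*$cosets.

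The step I expect to require the most care is checking that the meet $V = U \sqcap U_A$ genuinely inherits the $\sqsubseteq$-relationship needed to invoke Axiom~\ref{axiom cosets for smaller subgroups}: I must confirm $V \sqsubseteq U_A$ holds in the sense of Definition~\ref{df:main}(a) (that is, $VU_A \sqsubseteq U_A$), not merely that $V$ is below $U_A$ in some informal ordering. Since Axiom~\ref{basic axioms}(a) states that $\sqsubseteq$ on $^*$subgroups is a partial order with meets, and $V$ is the meet of $U$ and $U_A$, we do have $V \sqsubseteq U_A$ by definition of meet; but I should verify that the partial order of Axiom~\ref{basic axioms}(a) is precisely the relation $UV \sqsubseteq V$ from Definition~\ref{df:main}(a), so that the hypothesis of Axiom~\ref{axiom cosets for smaller subgroups} is literally met. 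Once this compatibility of orderings is pinned down, the argument is a direct chaining of the axioms with no genuine obstacle. I would close by remarking that the interpretation in $\+ M(G)$ is transparent: for $A = aU_A$ one simply takes $V = U \cap U_A$ and $B = aV$, the smaller coset sitting inside $aU_A = A$.
\end{proof}
```
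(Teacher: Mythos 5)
Your proposal is correct and is essentially identical to the paper's own argument: take $W$ with $A\in LC(W)$, set $V=W\wedge U$ via Axiom~\ref{basic axioms}(a), and apply Axiom~\ref{axiom cosets for smaller subgroups} (with $V\sqsubseteq W$) to obtain $B\in LC(V)$ with $B\sqsubseteq A$. The extra care you take in checking that the meet order coincides with the relation $UV\sqsubseteq V$ of Definition~\ref{df:main}(a) is sound but not an issue the paper dwells on.
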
 
 \n To see this, suppose $A \in LC(W)$. Let $V= W \wedge U$ using Axiom \ref{axiom basics}(a). By Axiom~\ref{axiom cosets for smaller subgroups} there is $B\sqsubseteq A$  in $LC(V)$, as required.

\smallskip

We write $\+ S(A,B)$ for the statement that there is a $^*$subgroup $V$ such that 
$A\in RC(V)$, $B\in LC(V)$, 
and $AB \sqsubseteq V$. It is easily checked that in $\+ M(G)$, we have $\+ S(A,B) \lra \+ S(B,A) \lra B = A^{-1}$. In particular, if we are given $A\in\+ M(G)$, then $B\in\+ M(G)$ is unique.  

\begin{axiom}[Inverses] \mbox{} \label{axiom inverses 1}
\begin{enumerate-(a)} 
\item 
For each $A$, there is a unique $B$ such that $\+ S(A,B)$.
\item 
$ \+ S(A,B) \lra \+ S(B,A)$.  Assuming the axiom holds in a structure $M$, instead of $\+ S(A,B)$ we will    write $B = A^\diamond$.

\item $A \mapsto {A}^\diamond$ is an isomorphism with respect to $\sqsubseteq$. 
\end{enumerate-(a)} 
\end{axiom}
Note that Axiom~\ref{axiom inverses 1} implies  that $A^{\diamond \diamond}=A$.

\subsection{Full filters, and the filter group}  

A subset $x$ of $M$ is called \emph{closed upwards  with respect to $\sqsubseteq$} if for all $A\in x$ and $A\sqsubseteq B$, we have $B\in x$. It is \emph{directed downward} if for all $B,C\in x$, there is some $A\in x$ with $A\sqsubseteq B$ and $A\sqsubseteq C$. 
We now define the set of full filters $\F(M)$. Thereafter we will  define a group operation and add axioms   ensuring that  $\F(M)$ with a canonical  topology forms  a Polish group.

\begin{definition}[Full filters] \label{df:filter}  \mbox{} \\
 A \emph{full filter $x$ on $M$} is a subset of $M$ with the following properties.  
\begin{enumerate-(a)} 
\item 
It is directed downwards, and closed upwards  with respect to $\sqsubseteq$. 
\item 
 Each $^*$subgroup $U$ in $M$ has a left $^*$coset and a right $^*$coset in $x$. 
\end{enumerate-(a)} 
 
\n  We let $\F(M)$ denote the set of full   filters on $M$.  

\n Letters  $x$, $y$, $z$ will denote  elements of $\F(M)$. 
\end{definition} 
We note that full filters are separating sets for the equivalence relation on $M$ of being left $^*$cosets of the same $^*$subgroup, and similarly for right $^*$cosets. In particular, they are maximal filters.
 \begin{claim} \label{build full filter} Suppose that   $M$ is countable. For each  $A\in M$ there is a full filter~$x$ such that $A \in x$.\end{claim}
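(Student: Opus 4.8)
The plan is to construct a full filter containing $A$ by a greedy enumeration argument, using the countability of $M$ to process all $^*$subgroups one at a time while maintaining a descending chain of $^*$cosets.

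First I would fix an enumeration $U_0, U_1, U_2, \ldots$ of all the $^*$subgroups of $M$, which is possible since $M$ is countable. I will build a descending sequence $A = A_{-1} \sqsupseteq A_0 \sqsupseteq A_1 \sqsupseteq \cdots$ of elements of $M$, arranged so that each $A_n$ is a left $^*$coset of some $^*$subgroup $V_n$ with $V_n \sqsubseteq U_0 \wedge \cdots \wedge U_n$ (using Axiom~\ref{axiom basics}(a) to form the meet). The base step puts $A_{-1} = A$. At stage $n$, given $A_{n-1}$, I set $V_n = V_{n-1} \wedge U_n$ and invoke Claim~\ref{axiom existence of full filters} (applied to $A_{n-1}$ and the $^*$subgroup $V_n$, or rather directly via Axiom~\ref{axiom cosets for smaller subgroups}) to obtain a left $^*$coset $A_n$ of a $^*$subgroup $V_n \sqsubseteq V_{n-1}$ with $A_n \sqsubseteq A_{n-1}$. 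Transitivity of $\sqsubseteq$ (Axiom~\ref{axiom basics}(c)) then gives $A_n \sqsubseteq A$ for every $n$.

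Next I would take $x$ to be the upward closure of the set $\{A_n : n \geq -1\}$ under $\sqsubseteq$, i.e.\ $x = \{ B \in M : \exists n \; A_n \sqsubseteq B \}$. By construction $A \in x$, and $x$ is closed upwards by definition. Directedness downward follows because the $A_n$ form a descending chain: given $B, C \in x$ with $A_m \sqsubseteq B$ and $A_k \sqsubseteq C$, the element $A_{\max(m,k)}$ lies below both. The key point is verifying Definition~\ref{df:filter}(b): each $^*$subgroup $U$ must have a left $^*$coset and a right $^*$coset in $x$. For the left coset, note that $U = U_n$ for some $n$, and since $V_n \sqsubseteq U_n$, Axiom~\ref{axiom cosets for smaller subgroups} guarantees a left $^*$coset $D$ of $U_n$ with $A_n \sqsubseteq D$, so $D \in x$ is the required left $^*$coset of $U$.

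The main obstacle I anticipate is ensuring the existence of a \emph{right} $^*$coset of each $U$ in $x$, since the construction as described controls only left $^*$cosets along the chain. To handle this I would interleave the requirements: at odd stages I secure a left $^*$coset below the current element and at even stages a right $^*$coset, alternately shrinking via the left-coset and right-coset versions of Claim~\ref{axiom existence of full filters} (both of which are asserted to hold). One then checks, using Axiom~\ref{axiom disjointness of cosets} to guarantee that the single coset of a given $^*$subgroup lying in $x$ is well-defined (distinct cosets of the same subgroup being disjoint, hence not both above any common $A_n$), that $x$ meets every $^*$subgroup on both sides. This alternation is the delicate part, as one must confirm that shrinking to secure a right $^*$coset does not destroy the left $^*$coset property needed to continue the descending chain; this is where the interaction between the left/right coset axioms and the transitivity of $\sqsubseteq$ must be applied carefully.
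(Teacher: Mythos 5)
Your proposal is correct and, with the interleaved left/right construction from your final paragraph, is essentially the paper's own proof: the paper alternately shrinks $A_n$ to a left $^*$coset $B_n$ of some $V_n\sqsubseteq U_n$ and then to a right $^*$coset $A_{n+1}$ of some $W_n\sqsubseteq V_n$, and takes the upward closure of the chain; your worry about the alternation is unfounded since Claim~\ref{axiom existence of full filters} applies to an arbitrary element of $M$ regardless of which kind of coset it is. One small citation slip: producing the left (or right) $^*$coset of $U_n$ itself inside $x$ uses Axiom~\ref{axiom disjointness of cosets}(a), which passes \emph{upward} from a coset of the smaller $^*$subgroup, rather than Axiom~\ref{axiom cosets for smaller subgroups}, whose existential direction goes downward.
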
 
\n To see this, let $\seq {U_n}_{n\in\omega}$ be a listing of  all $^*$subgroups in $M$. We construct a $\sqsubseteq$-decreasing sequence  $\seq{A_n}\sN n$ as follows. Let $A_0=A$. Given $A_n$, find $V_n\sqsubseteq U_n$ and a left $^*$coset $B_n$ of $V_n$ with $B_n\sqsubseteq A_n$ by Claim~\ref{axiom existence of full filters}. Similarly, take $W_n\sqsubseteq V_n$ and a right $^*$coset $A_{n+1}$ of $W_n$ with $A_{n+1}\sqsubseteq B_n$. Then $\{C\colon \exists n\ A_n\sqsubseteq C\}$ is a full filter on $M$ containing $A$. 

\begin{definition}[Topology on the set of  full filters]  \label{def:topo}  \  \\ We define a  topology on $\F(M)$ by declaring  as subbasic        the  open sets 
 
\bc $\hat{A}=\{x\in \F(M)\colon A\in x\}$ \ec where  $A\in M$.  These sets    form a base since  filters are directed.  \end{definition}

The Baire space ${}^\omega \omega$ is endowed with  a topology given by the subbasic open sets $\{ f \colon \, f(n)= i\}$ for $n, i \in \omega$. 
It is \emph{totally disconnected}, i.e. any connected subset contains at most one element. 

%

\begin{prop} \label{filter space is Polish}  Suppose that $M$ is countable. Then 
$\F(M)$ is a totally disconnected Polish space. 
\end{prop}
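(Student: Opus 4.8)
The plan is to show that $\F(M)$ is Polish by exhibiting it as a closed (indeed $G_\delta$, but closed will suffice) subspace of a Polish space, and to establish total disconnectedness directly from the fact that the subbasic sets $\hat A$ are clopen.

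First I would fix an enumeration $\seq{A_n}_{n\in\omega}$ of $M$ (possible since $M$ is countable) and embed $\F(M)$ into Cantor space $\cantor$ by sending each full filter $x$ to its characteristic function $\chi_x \in \cantor$, where $\chi_x(n) = 1$ iff $A_n \in x$. This map is injective, and I would check it is a homeomorphism onto its image: the basic open set $\hat A$ in $\F(M)$ corresponds to the clopen condition ``coordinate for $A$ equals $1$'', which matches the Cantor topology, so the map is continuous with continuous inverse. Total disconnectedness of $\F(M)$ is then immediate, since each $\hat A$ is clopen: its complement in $\F(M)$ is $\bigcup \{\hat{A'} : A' \text{ disjoint from } A\}$ together with the other cosets of the same $^*$subgroup (using Axiom~\ref{axiom disjointness of cosets}, so distinct $^*$cosets of a common $^*$subgroup are disjoint and their union covers $\F(M)$), giving a basis of clopen sets that separates points, since distinct full filters differ on some $A$.

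The heart of the argument is that the image is closed in $\cantor$, equivalently that being (the characteristic function of) a full filter is captured by countably many closed conditions. I would write out, for a candidate subset $x \sub M$ coded by a point of $\cantor$, the defining clauses of Definition~\ref{df:filter}: upward closure under $\sqsubseteq$ (for each pair $A \sqsubseteq B$, the implication $A \in x \Rightarrow B \in x$ is a clopen constraint), downward directedness (for each $B, C \in x$ there is $A \in x$ with $A \sqsubseteq B, A \sqsubseteq C$), and the requirement that each $^*$subgroup $U$ has both a left and a right $^*$coset in $x$. Upward closure and the implications are each clopen, hence their conjunction over all pairs is closed. The existential clauses --- downward directedness and the existence of cosets --- are in general only $\bSi{2}$, not closed, so here I must use the structure: I would argue that on the set already satisfying upward closure plus the disjointness provided by Axiom~\ref{axiom disjointness of cosets}, a full filter contains \emph{exactly one} left $^*$coset and exactly one right $^*$coset of each $^*$subgroup $U$, so the ``existence'' clause becomes the closed condition that $x$ meets each finite partition $LC(U)$ in precisely one class (and likewise $RC(U)$), which is a closed, in fact clopen-per-$U$, requirement; and directedness then follows automatically from Claim~\ref{build full filter}-style intersection properties encoded by the axioms, or can itself be recast as meeting finitely many cosets coherently.

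The main obstacle I anticipate is exactly this last point: verifying that the genuinely existential requirements can be rephrased as closed conditions. The risk is a limit of partial filters failing directedness or coset-existence in the limit. The resolution I would pursue is to lean on Axiom~\ref{axiom disjointness of cosets} and Axiom~\ref{axiom cosets for smaller subgroups} to show that ``containing a $^*$coset of $U$'' is equivalent, for an upward-closed directed $x$, to ``containing exactly one element of the finite-per-level family $LC(U)$,'' turning an $\exists$ into a ``meets a partition in one point'' clause that is closed; the compactness/finiteness needed is supplied by the meet operation in Axiom~\ref{axiom basics}(a) and the coset correspondence in Axiom~\ref{axiom cosets for smaller subgroups}. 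Once all defining clauses are seen to be closed, the image is a closed subset of $\cantor$, hence compact and in particular Polish, and since the embedding is a homeomorphism, $\F(M)$ is a totally disconnected Polish space, completing the proof.
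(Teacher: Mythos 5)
There is a genuine gap, and it is located exactly where you flagged your ``main obstacle.'' Your argument hinges on the claim that the image of $\F(M)$ in $\cantor$ is \emph{closed}, which you reduce to the assertion that $LC(U)$ is a \emph{finite} partition for each $^*$subgroup $U$, so that ``$x$ meets $LC(U)$ in exactly one point'' is a clopen condition. But $LC(U)$ is infinite in general: for an oligomorphic group such as $G=\Aut(\QQ,<)$ every open subgroup has infinite index, and indeed Lemma~\ref{canonical oligomorphic action} explicitly uses that $LC(W)$ is infinite. When $LC(U)$ is infinite, ``$x$ contains at least one element of $LC(U)$'' is a countable union of clopen sets, i.e.\ open but not closed, and a limit of characteristic functions of full filters can perfectly well omit every coset of some $U$. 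The failure is not just technical: if the image were closed in $\cantor$ then $\F(M)$ would be compact, contradicting Proposition~\ref{prop:profinite}, which shows that compactness of $\F(M)$ is \emph{equivalent} to finiteness of all the sets $LC(U)$ (plus a normality condition). So no repair of the closedness claim is possible in the generality of the proposition.

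The statement is still true, and your setup is salvageable, because Polishness only requires the image to be $G_\delta$ (Alexandrov), a weaker target you mention and then abandon. In your Cantor-space coding: upward closure and ``at most one coset of each $U$'' (via Axiom~\ref{axiom disjointness of cosets}) are closed; ``at least one coset of each $U$'' is open, hence $G_\delta$ in a metrizable space; and downward directedness is a countable intersection of (clopen $\Rightarrow$ open) implications, hence $G_\delta$. The intersection of countably many $G_\delta$ sets is $G_\delta$, and you are done. The paper instead codes a full filter $x$ as the point of Baire space recording \emph{which} left and right coset of the $n$-th subgroup $U_n$ lies in $x$; in that coding the existence-and-uniqueness clause of Definition~\ref{df:filter}(b) is built into the coding and holds automatically, leaving only upward closure (closed) and directedness ($G_\delta$) to check. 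Your clopen-basis argument for total disconnectedness is fine and matches the paper's.
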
 
\begin{proof} Since $M$  is countable, the $^*$subgroups and $^*$cosets   in $M$ can be   provided with an ordering of type $\omega$. 
Let  $U_n$ denote  the $n$-th $^*$subgroup in~$M$. 

We define an  injection $\Delta$ from $\F(M)$ into   Baire space ${}^\omega \omega$.  
  Suppose that $x\in \F(M)$. Let $\Delta(x)(2n)$ be the unique $i$ such that the $i$-th left $^*$coset of $U_n$ in $M$ is an element of $x$. Let   $\Delta(x)(2n+1)$ be the unique $i$ such that the $i$-th right $^*$coset of $U_n$ in $M$ is an element of $x$. By Axiom \ref{axiom disjointness of cosets}, $\Delta$ is well-defined and injective.

We claim that $\Delta$ is a homeomorphism to $\ran(\Delta)$. It is clear that the image of any basic open subset $\hat A$ of $\+ F(M)$ is open. Conversely, the preimage of any subbasic open subset of the Baire space is of the form $\{x\in \F(M)\mid  A \in x\}$ for some $A \in M$.


%
%

We  show that $\ran (\Delta)$ is $G_\delta$.  Identifying a full filter $x$ with $\Delta(x)$, upwards closure is a closed condition. The condition in Def.\ \ref{df:filter}(b) is satisfied automatically, because  $f(2n)$ denotes  a left $^*$coset of $U_n$, and $f(2n+1)$  a right $^*$coset of $U_n$.  Downwards directedness is a $G_\delta$ condition.

  Hence $\F(M)$ is homeomorphic to a $G_\delta$ subset of the Baire space. Since every $G_\delta$ subspace of a Polish space is again Polish, it follows that $\F(M)$ is a Polish space.  Clearly it is totally disconnected since Baire space is.
\end{proof} 

%
%


%

 For $x \in \+ F(M)$ we  let \[x^{-1}=\{A^\diamond\mid A\in x\}.\] We claim that $x^{-1}$ is   a full filter. It is upwards closed and directed by the previous axiom. The condition Def.\ \ref{df:filter}(b) holds since the $*$ operation interchanges   left $^*$cosets of  $U$ with  right $^*$cosets of $U$. 
Since $A^{\diamond \diamond}=A$, we further have $\hat{A}^{-1}=\hat{A^\diamond}$. 

\begin{definition}[Product of full filters]   For full filters $x,y$ on $M$, we put
 \[x\cdot y=\{C\in M\mid \exists A\in x \exists B\in y\ AB\sqsubseteq C\}.\] 
\end{definition} 

 The next axiom ensures  that this   operation on $\F(M)$  is defined and continuous.  
\begin{axiom} \label{axiom products of left and right cosets} 
Suppose $A\in RC(U)\cap LC(V)$ and $B\in  RC(V)\cap LC(W)$.

\n There is $C\in RC(U)\cap LC(W)$ such that  $A B\sqsubseteq C$, and  $C$ is the least   $D$ with $A B\sqsubseteq D$.  We will write $A\cdot B$ for this  (unique)   $C$.
\end{axiom}

This holds in  $\+ M(G)$: if $A=aV=Ua'$ and $B=bW=Vb'$, then $AB=aVVb'=aVb'$. Since $aVb'=Ua'b'=abW$, $C=AB$ is a right coset of $U$ and a left coset of $W$. 
It is clearly minimal.

\begin{claim} \label{product of filters is a filter} 
$x\cdot y$ is an element of $\F(M)$ for all $x,y\in \F(M)$. 
\end{claim}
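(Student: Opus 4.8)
The plan is to verify directly the three defining properties of a full filter from Definition~\ref{df:filter} for the set $z := x\cdot y$: upward closure and downward directedness with respect to $\sqsubseteq$, and the presence of a left $^*$coset and a right $^*$coset of every $^*$subgroup. Upward closure is immediate: if $C\in z$, witnessed by $A\in x$, $B\in y$ with $AB\sqsubseteq C$, and $C\sqsubseteq C'$, then $AB\sqsubseteq C'$ by transitivity of $\sqsubseteq$ (Axiom~\ref{axiom basics}(c)), so $C'\in z$. The other two properties are where the work lies, and both are driven by the product construction of Axiom~\ref{axiom products of left and right cosets}, which requires its two factors to be \emph{compatible}, i.e.\ to share a middle $^*$subgroup.

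For the coset condition, fix a $^*$subgroup $U$; I produce a left $^*$coset of $U$ in $z$, the right case being symmetric. Since $y$ is a full filter it contains some $B_0\in LC(U)$, and by Axiom~\ref{axiom basics}(b) this $B_0$ is a right $^*$coset of a unique $^*$subgroup $V$, so $B_0\in RC(V)\cap LC(U)$. Since $x$ is a full filter it contains a left $^*$coset $A_0$ of $V$; writing $U_0$ for the unique $^*$subgroup with $A_0\in RC(U_0)$, we have $A_0\in RC(U_0)\cap LC(V)$. The middle $^*$subgroups now match at $V$, so Axiom~\ref{axiom products of left and right cosets} applies and yields $C := A_0\cdot B_0\in RC(U_0)\cap LC(U)$ with $A_0B_0\sqsubseteq C$. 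Then $C\in z$ and $C$ is a left $^*$coset of $U$, as required.

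For downward directedness, suppose $C_1,C_2\in z$, witnessed by $A_i\in x$, $B_i\in y$ with $A_iB_i\sqsubseteq C_i$. Using downward directedness of $x$ and $y$, choose $A\in x$ below $A_1,A_2$ and $B\in y$ below $B_1,B_2$; then Axiom~\ref{ax:order} gives $AB\sqsubseteq C_1$ and $AB\sqsubseteq C_2$ simultaneously. It remains to replace $A,B$ by a compatible pair. Let $V_A$ be the unique $^*$subgroup with $A\in LC(V_A)$ and $V_B$ the unique $^*$subgroup with $B\in RC(V_B)$, and put $V = V_A\wedge V_B$ by Axiom~\ref{axiom basics}(a). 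Let $A'\in LC(V)$ and $B'\in RC(V)$ be the unique left and right $^*$cosets of $V$ lying in $x$ and $y$ respectively; these exist and are unique because full filters are separating (as noted after Definition~\ref{df:filter}). I claim $A'\sqsubseteq A$ and $B'\sqsubseteq B$: by Axiom~\ref{axiom disjointness of cosets}(a) there is $\hat A\in LC(V_A)$ with $A'\sqsubseteq\hat A$, whence $\hat A\in x$ by upward closure, and since $x$ contains a unique left $^*$coset of $V_A$ we get $\hat A = A$; the case of $B'$ is symmetric. Now $A',B'$ are compatible at $V$, so $C := A'\cdot B'$ is defined by Axiom~\ref{axiom products of left and right cosets} and is the least $D$ with $A'B'\sqsubseteq D$. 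From $A'\sqsubseteq A$, $B'\sqsubseteq B$ and $AB\sqsubseteq C_i$, Axiom~\ref{ax:order} gives $A'B'\sqsubseteq C_i$, so $C\sqsubseteq C_i$ by minimality; and $C\in z$ since $A'\in x$, $B'\in y$. Thus $C$ is the desired common lower bound.

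The main obstacle is precisely this compatibility step inside directedness: the witnesses $A,B$ coming from directedness of $x,y$ need not have matching middle $^*$subgroups, so the product of Axiom~\ref{axiom products of left and right cosets} is not directly available. The resolution is to descend to the $^*$cosets of the meet $V = V_A\wedge V_B$ inside the two filters and to argue, via Axiom~\ref{axiom disjointness of cosets} together with the separating property of full filters, that these refinements still lie in $x$ and $y$ and sit below $A$ and $B$; the minimality clause of Axiom~\ref{axiom products of left and right cosets} then transports the bound down to $C_1$ and $C_2$. Everything else reduces to transitivity of $\sqsubseteq$ and routine bookkeeping with left and right $^*$cosets.
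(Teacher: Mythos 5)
Your proposal is correct and follows essentially the same route as the paper: upward closure from transitivity, the coset condition by matching the middle $^*$subgroup of a left $^*$coset of $U$ in $y$ against a left $^*$coset in $x$ and applying Axiom~\ref{axiom products of left and right cosets}, and directedness by first using directedness of $x$ and $y$, then shrinking to $^*$cosets of the meet $V_A\wedge V_B$ (justified via Axiom~\ref{axiom disjointness of cosets} and the fact that full filters select a unique $^*$coset of each $^*$subgroup) before invoking the minimality clause of the product axiom. Your write-up of the compatibility step is if anything slightly more explicit than the paper's.
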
 
\begin{proof} 
Since the relation $\sqsubseteq$ is transitive, $x\cdot y$ is  closed upwards by definition of the product. 

To see that $x\cdot y$ is  directed downwards, suppose that elements $C_0, C_1\in x\cdot y$ are given. Then there are  $A_0,A_1\in x$ and $B_0,B_1\in y$ such that  $A_i B_i\sqsubseteq C_i$ for $i= 0,1$. Since $x$ and $y$ are directed downwards, there are $A\in x$ with $A\sqsubseteq A_0$ and  $A\sqsubseteq A_1$, and $B\in y$ with $B\sqsubseteq B_0$ and $B\sqsubseteq B_1$. By monotonicity, $AB\sqsubseteq C_i$ for $i\leq1$. 
Let  $V$, $W$  be the $^*$subgroups in $M$ such that  $B\in  RC(V) \cap LC(W)$. 

We can assume $A\in LC(V)$ 
by shrinking $A$, $B$, $V$, $W$ while using the fact that $x$ and $y$ are full filters to maintain that $A\in x$ and $B\in y$. 
In more detail, take any $A\in LC(U)$. 
Let  $V' =  U\wedge V$ by Axiom \ref{basic axioms}(a) and let $A'\in LC(V')\cap x$ using that $x$ is a full filter. 
$A$, $A'$ cannot be disjoint, since $A,A'\in x$ and $x$ is a filter, so $A'\sqsubseteq A$ by Axiom \ref{axiom disjointness of cosets}. 
One similarly obtains $B'\sqsubseteq B$ in $RC(V')\cap y$. 
We have $W'\sqsubseteq W$  for the unique *subgroup $W'$ with $B'\in LC(W')$, by Axiom \ref{axiom cosets for smaller subgroups}. 
Thus $A'$, $B'$, $V'$, $W'$ are as required. 

By Axiom \ref{axiom products of left and right cosets}, there is a unique $C\in LC(W)$ 
with $AB\sqsubseteq C$ and by its minimality, $C \sqsubseteq C_i$ for $i\leq1$. Since $AB\sqsubseteq C$, we have $C\in x\cdot y$. 

We now show that $x\cdot y$ satisfies condition (b) in Def.\ \ref{df:filter}. Take any $^*$subgroup $W$ in $M$ and let  $B\in LC(W)\cap  y$. Let $V$ the $^*$subgroup such that $B \in RC(V)$, and let $A \in LC(V) \cap x$.
By Axiom \ref{axiom products of left and right cosets}, there is $C\in LC(W)$ 
with $AB\sqsubseteq C$. Then $C\in x\cdot y$.  Right $^*$cosets are similar.
\end{proof} 

The next  axiom can be expressed by a $\Pi^1_1$ condition in case that $M$ is countable. 
An equivalent first-order axiom (Axiom~\ref{axiom associativity2}) is introduced in Subsection~\ref{ss:replace fo}. 
We work with the simpler $\Pi^1_1$ axiom here, since it suffices for the main results. 


\begin{axiom} \label{axiom product of filters}  \label{axiom associativity} 
 The   operation $\cdot$   on $\F(M)$ is associative. 
\end{axiom} 

\begin{remark}[Neutral element of $\F(M)$] \label{rem:neutral} Let $1_{\F(M)}$ denote the filter generated by the $^*$subgroups in $M$. We   have $1_{\F(M)}\cdot x=x\cdot 1_{\F(M)}=x$ and $x\cdot x^{-1}=x^{-1}\cdot x=1_{\F(M)}$ for all $x\in\F(M)$ by Axioms \ref{axiom inverses 1} and \ref{axiom product of filters}.  \end{remark}

%

The next axiom ensures that $AB\sqsubseteq C$ and $A\sqsubseteq B$ express the expected  properties in $\F(M)$. 
It holds in $\+ M(G)$ by continuity of the group operation. 

\begin{axiom} \mbox{} \label{axiom negative expression of subset} 
\begin{enumerate-(a)}
\item 
$AB\sqsubseteq C$ iff there are no $^*$cosets $D\sqsubseteq A$, $E\sqsubseteq B$ and $F$ such that  $DE\sqsubseteq F$ and $C$,$F$ are  {disjoint}. 
\item 
$A\sqsubseteq B $ iff there is no $^*$coset $C\sqsubseteq A$ with $B$, $C$ disjoint. 
\end{enumerate-(a)} 
\end{axiom} 

Recall from Definition~\ref{def:topo} that $\hat{A}=\{x\in \F(M)\colon A\in x\}$. Note that if $A, B$ are disjoint (as defined before Axiom~\ref{axiom disjointness of cosets}) then $\hat A \cap \hat B= \ES$, because filters are  directed downwards. 
Let $\hat{A}\hat{B}$ denote the setwise product. 

\begin{claim} \label{subset is correct}  Let  $A,B,C \in M$.  Let $U$ be a $^*$subgroup. 
\bi \item[(a)]
$AB\sqsubseteq C \Longleftrightarrow \hat{A}\hat{B}\subseteq \hat{C}$.
\item[(b)]  $\hat U$ is a subgroup of $\F(M)$.
\item[(c)]  $A\sqsubseteq B \Longleftrightarrow \hat{A}\subseteq \hat{B}$. 

\item[(d)] $\hat {B^\diamond} = (\hat{B})^{-1}$. 
\item[(e)]  $A\in LC(U) \Longleftrightarrow\hat A $ is a left coset of $\hat U$, and similar for right cosets.

 \ei
\end{claim}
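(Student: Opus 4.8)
The plan is to reduce every clause to the behaviour of full filters, using Axiom~\ref{axiom negative expression of subset} to turn a \emph{failure} of a $\sqsubseteq$-relation into a disjointness witness and Claim~\ref{build full filter} to realise that witness inside a filter. I would prove the parts in the order (a), (c), (d), then (b), and finally (e), since each later part uses the earlier ones.

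For (a), the implication $AB\sqsubseteq C \Rightarrow \hat A\hat B\subseteq\hat C$ is immediate from the definition of the product: if $A\in x$ and $B\in y$ then $C\in x\cdot y$. For the converse I would argue contrapositively. If $AB\sqsubseteq C$ fails, Axiom~\ref{axiom negative expression of subset}(a) gives $D\sqsubseteq A$, $E\sqsubseteq B$ and $F$ with $DE\sqsubseteq F$ and $C,F$ disjoint; by Claim~\ref{build full filter} choose full filters $x\ni D$, $y\ni E$, so that $A\in x$, $B\in y$ by upward closure while $F\in x\cdot y$. By Claim~\ref{product of filters is a filter} $x\cdot y\in\F(M)$, and since $C,F$ are disjoint we have $\hat C\cap\hat F=\ES$, so $x\cdot y\notin\hat C$, witnessing $\hat A\hat B\not\subseteq\hat C$. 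Part (c) is entirely parallel using Axiom~\ref{axiom negative expression of subset}(b): the forward direction is upward closure of filters, and the converse extracts from a failure of $A\sqsubseteq B$ a coset $C\sqsubseteq A$ disjoint from $B$ and a filter $x\ni C$ with $A\in x$ but $B\notin x$.

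Part (d) is essentially the already-recorded identity $\hat A^{-1}=\hat{A^\diamond}$: from $A^{\diamond\diamond}=A$ (Axiom~\ref{axiom inverses 1}) and $x^{-1}=\{A^\diamond:A\in x\}$ one gets $C\in x^{-1}$ iff $C^\diamond\in x$, and hence $\hat{B^\diamond}=(\hat B)^{-1}$. The key auxiliary step before (b) and (e) is that every $^*$subgroup is its own inverse, $U^\diamond=U$. I would derive this from (a), (c), and the neutral element: since $1_{\F(M)}$ contains every $^*$subgroup, $1_{\F(M)}\in\hat U\cap\hat W$ for all $^*$subgroups, so if $UW\sqsubseteq U$ then by (a) $\hat U\hat W\subseteq\hat U$, and applying this with $1_{\F(M)}\in\hat U$ and arbitrary $y\in\hat W$ gives $y=1_{\F(M)}\cdot y\in\hat U$ (Remark~\ref{rem:neutral}); thus $\hat W\subseteq\hat U$, so $W\sqsubseteq U$ by (c). Hence $U$ is the maximal $^*$subgroup $W$ with $UW\sqsubseteq U$, i.e. $U\in LC(U)$, and symmetrically $U\in RC(U)$, so $\+ S(U,U)$ holds and uniqueness in Axiom~\ref{axiom inverses 1} forces $U^\diamond=U$. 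Part (b) then follows at once: $1_{\F(M)}\in\hat U$, closure under products is (a) applied to $UU\sqsubseteq U$, and closure under inverses is (d) with $U^\diamond=U$, giving $(\hat U)^{-1}=\hat{U^\diamond}=\hat U$.

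For (e), to prove $A\in LC(U)\Rightarrow \hat A$ is a left coset of $\hat U$, I would fix any $x\in\hat A$ (nonempty by Claim~\ref{build full filter}) and show $\hat A=x\hat U$. The inclusion $x\hat U\subseteq\hat A$ is (a) applied to $AU\sqsubseteq A$. For $\hat A\subseteq x\hat U$, given $w\in\hat A$ it suffices that $x^{-1}w\in\hat U$; since $A\in x$ gives $A^\diamond\in x^{-1}$ and $A\in w$, this reduces to $A^\diamond A\sqsubseteq U$, which holds because $\+ S(A^\diamond,A)$ (Axioms~\ref{axiom inverses 1}) is witnessed by a $^*$subgroup $V$ with $A\in LC(V)$, and uniqueness of the $^*$subgroup a coset belongs to forces $V=U$. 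Conversely, if $\hat A$ is a left coset of $\hat U$, take $A\in LC(U_0)$ by Axiom~\ref{basic axioms}(b); the forward direction also exhibits $\hat A$ as a left coset of $\hat{U_0}$, and since a subgroup is recovered from any left coset as $\hat A^{-1}\hat A$ we get $\hat U=\hat{U_0}$, whence $U=U_0$ by injectivity of $U\mapsto\hat U$ (from (c) and antisymmetry in Axiom~\ref{basic axioms}(a)). The right-coset statements are symmetric. The engine throughout is the backward directions of (a) and (c); the genuinely delicate point — and the step I expect to be the main obstacle — is (e), namely correctly translating the abstract notion $LC(U)$ into honest group-theoretic cosets of $\hat U$, which requires both the auxiliary fact $U^\diamond=U$ and the identification $A^\diamond A\sqsubseteq U$ via the uniqueness built into Definition~\ref{df:main}(b).
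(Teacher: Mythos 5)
Your proof is correct and follows essentially the same route as the paper: forward directions from the definitions of the filter product and upward closure, converses via Axiom~\ref{axiom negative expression of subset} together with Claim~\ref{build full filter}, and (e) via $x\hat U=\hat A$ using $AU\sqsubseteq A$ and $A^\diamond A\sqsubseteq U$. You supply some details the paper leaves implicit (the auxiliary fact $U^\diamond=U$, part (b), and a cleaner backward direction of (e) recovering $\hat U$ as $\hat A^{-1}\hat A$), all of which check out.
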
 
\begin{proof} 
(a) 
The forward implication is clear. For the converse implication, we assume that $AB\not\sqsubseteq C$ and   find full filters $x\in\hat{A}$ and $y\in \hat{B}$ with $x y\notin\hat{C}$. 
By Axiom \ref{axiom negative expression of subset}(a), there are $D\sqsubseteq A$, $E\sqsubseteq B$ and $F$ with $DE\sqsubseteq F$ and $C$, $
F$  {disjoint}. By  Claim~\ref{build full filter}, take full filters $x\in \hat D$ and $y\in\hat E$. Then $x y\in \hat{F}$. Since   $\hat{C}\cap \hat{F}= \ES$ we conclude that  $x y\notin\hat{C}$.

\n (c)   is similar to (a) using Axiom \ref{axiom negative expression of subset}(b), and (d) is easily verified.

\n  (e) forward implication: take any $x\in \hat{A}$. 
We  show that $x\hat{U}=\hat{A}$.

For $x\hat{U}\subseteq \hat{A}$, let $y\in \hat{U}$. Since $A$ is a left $^*$coset of $U$, we have $AU\sqsubseteq A$. So $x\cdot y\in \hat{A}\hat{U}\subseteq\hat{A}$ by (a).

For  $\hat{A}\subseteq x\hat{U}$, let $y\in \hat{A}$. To show  that $y\in x\hat{U}$, or equivalently $x^{-1} y\in \hat{U}$,  note that we have $x^{-1}y\in \hat{A}^{-1}\hat{A}=\hat{A^\diamond}\hat{A} \subseteq \hat{U}$ by (d) and (a). 

\n (e) backward implication: Suppose $\hat A = x \hat V$. There is $B \in x$ such that $B \in LC(V)$. By the forward implication, $\hat B$ is a left coset of $\hat V$. Also $x \in \hat A \cap \hat B$, so $A, B$ are not disjoint.  Since $A, B \in LC(V)$ this implies $A=B$ by Axiom~\ref{axiom disjointness of cosets}. Right cosets are dealt with symmetrically.
\end{proof} 




%
%

We can't   prove on the basis of the present axioms that $(\F(M),\cdot)$ is isomorphic to a closed subgroup of $\S$; 
this will be achieved in Section \ref{s: turning} in the oligomorphic case, and in Section~\ref{ss:SBB compact} in the profinite case. 
At the current point  we can show the following.  
\begin{prop} \label{filter group is Polish} \ 

\n  Suppose that  $M$ is countable. Then 
$(\F(M),\cdot)$ is a Polish~group. 
\end{prop}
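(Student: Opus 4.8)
The plan is to observe that almost all of the algebraic content is already in place, so the task reduces to checking continuity of the group operations with respect to the Polish topology produced in Proposition~\ref{filter space is Polish}. First I would record that $(\F(M),\cdot)$ is a group: closure under $\cdot$ is Claim~\ref{product of filters is a filter}, associativity is exactly Axiom~\ref{axiom product of filters}, and the existence of a neutral element $1_{\F(M)}$ together with two-sided inverses $x^{-1}$ is supplied by Remark~\ref{rem:neutral} (using that $x^{-1}\in\F(M)$, as verified just before the definition of the product). Since the underlying space is Polish by Proposition~\ref{filter space is Polish}, it then suffices to prove that multiplication and inversion are continuous; from this it follows that $(\F(M),\cdot)$ is a topological group whose topology is Polish, i.e.\ a Polish group.

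For continuity of multiplication $m\colon \F(M)\times\F(M)\to\F(M)$, I would argue directly from the subbase of Definition~\ref{def:topo}: it is enough to show that $m^{-1}(\hat C)$ is open for each $C\in M$. If $x\cdot y\in\hat C$, that is $C\in x\cdot y$, then by the definition of the product there are $A\in x$ and $B\in y$ with $AB\sqsubseteq C$. Then $\hat A\times\hat B$ is an open neighbourhood of $(x,y)$, and for every $(x',y')\in\hat A\times\hat B$ we have $A\in x'$, $B\in y'$ and $AB\sqsubseteq C$, so $C\in x'\cdot y'$. Hence $\hat A\times\hat B\subseteq m^{-1}(\hat C)$, which gives openness. (Equivalently, one may phrase this via Claim~\ref{subset is correct}(a), which records $AB\sqsubseteq C \Leftrightarrow \hat A\hat B\subseteq \hat C$.)

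For continuity of inversion $\iota\colon x\mapsto x^{-1}$, I would use that $\diamond$ is an involution, which follows from Axiom~\ref{axiom inverses 1}. The preimage of a subbasic open set is $\iota^{-1}(\hat B)=\{x : B\in x^{-1}\}=\{x : B^\diamond\in x\}=\hat{B^\diamond}$, using that $B\in x^{-1}$ iff $B=A^\diamond$ for some $A\in x$ iff $B^\diamond\in x$. This set is open, so $\iota$ is continuous; being an involution, it is in fact a homeomorphism. Combining the three continuity facts with the Polishness of the space established in Proposition~\ref{filter space is Polish} completes the proof.

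Since none of these steps is genuinely delicate, I do not expect a serious obstacle; the real work has been front-loaded into Proposition~\ref{filter space is Polish} and into the axioms governing $\cdot$ and $\diamond$. The only point that requires a little care is continuity of multiplication, where one must confirm that the witnesses $A,B$ chosen for a single pair $(x,y)$ simultaneously witness membership for every nearby pair in $\hat A\times\hat B$. This is immediate from the purely existential form of the definition of $x\cdot y$, which is exactly why it causes no difficulty.
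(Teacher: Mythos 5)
Your proof is correct, and it takes a somewhat different route from the paper's. The paper verifies continuity of the single map $(x,y)\mapsto x\cdot y^{-1}$: given $D\in x\cdot y^{-1}$ in $LC(U)$, it picks $B\in y\cap LC(U)$ and $A\in x\cap LC(U)$, invokes Axiom~\ref{axiom products of left and right cosets} to produce $C\in LC(V)$ with $AB^{\diamond}\sqsubseteq C$, and uses fullness of the filter $x\cdot y^{-1}$ to conclude $C=D$, so that $\hat A(\hat B)^{-1}\subseteq\hat D$. You instead split the task into joint continuity of multiplication and continuity of inversion. Your multiplication argument is genuinely more elementary: since $C\in x\cdot y$ is by definition witnessed by a single pair $A\in x$, $B\in y$ with $AB\sqsubseteq C$, the preimage $m^{-1}(\hat C)$ is the union of the boxes $\hat A\times\hat B$ over all such pairs, so no appeal to Axiom~\ref{axiom products of left and right cosets} or to fullness of the product filter is needed at this step. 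Your inversion argument, $\iota^{-1}(\hat B)=\hat{B^{\diamond}}$ via the involution $A^{\diamond\diamond}=A$, matches the observation the paper records just before the definition of the product ($\hat A^{-1}=\hat{A^{\diamond}}$). What the paper's packaging buys is brevity (one continuity check instead of two, with the group-theoretic facts from Remark~\ref{rem:neutral} and Claim~\ref{product of filters is a filter} taken as read); what yours buys is that the only nontrivial coset manipulation is isolated in the algebraic preliminaries rather than in the topological verification. Both are complete proofs.
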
 
\begin{proof} 
In view of Prop.~\ref{filter space is Polish}, it suffices  to show that the   operation  $x,y \mapsto x\cdot y^{-1}$  on $\F(M)$ is continuous. I.e. it suffices  to show that for all $x$, $y$ and every  $D \in M$ with $x\cdot y^{-1}\in \hat{D}$, there are  $A, B\in  M $ with $A\in x$, $B\in y$ such that $u\cdot v^{-1} \in \hat{D}$ holds for all $u\in \hat{A}$ and $v\in\hat{B}$. 

To see this, suppose that $D\in x\cdot y^{-1}$ is a left coset of $U$. We choose a left coset $B\in y$ of $U$. 
By Axiom~\ref{axiom basics}, $B$ is a right coset of some $V$. We choose a left coset $A\in x$ of $U$. 
By Axiom~\ref{axiom products of left and right cosets}, there is a left coset $C$ of $V$ with $AB^\diamond\sqsubseteq C$ and hence $C\in x\cdot y^{-1}$. Since $x\cdot y^{-1}$ is a full filter, we have $C=D$. 
By Claim \ref{subset is correct}, $\hat{A}\hat{B^\diamond}= \hat{A}(\hat{B})^{-1}\subseteq \hat{C}=\hat{D}$. 
\end{proof}

%
%
%

We   call $\F(M)$ the \emph{filter group} of $M$. Note that $\+ F(M)$ is defined abstractly as a Polish group, rather than  as a permutation group. 
%
We show that 
closed subgroups of $\S$ with countably many open subgroups can be recovered  in a canonical way as the filter group of  their coarse group.
\begin{prop}[cf.\ ~\cite{Kechris.Nies.etal:18}, after Claim 3.6] \label{fact:standard} \

\n Suppose that $G $ is a closed subgroup of $ \S$ such that  $\+ M(G)$ is countable. There is a natural group homeomorphism  \bc $\Phi: G \cong \+ F( \+ M(G))$ given  by   $g \mapsto \{ A \colon A \ni g\}$, \ec with inverse  $\Xi$ given by $x \mapsto g$ where $ \bigcap x = \{g\}$.    \end{prop}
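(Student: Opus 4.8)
The plan is to verify directly that $\Phi$ and $\Xi$ are mutually inverse group homeomorphisms, working concretely inside $\+ M(G)$, where the abstract relation $\sqsubseteq$ is just set-theoretic inclusion of open cosets, the $^*$subgroups are the open subgroups of $G$, the product $A\cdot B$ is the set product, and $A^\diamond=A^{-1}$.

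First I would check that $\Phi(g)=\{A:g\in A\}$ is a full filter. Upward closure is immediate, and for downward directedness, given $A=aU$ and $B=bV$ containing $g$, the coset $g(U\cap V)$ lies in both and contains $g$ (open subgroups are closed under intersection, Axiom~\ref{axiom basics}(a)). Condition (b) of Definition~\ref{df:filter} holds since $gU\in LC(U)$ and $Ug\in RC(U)$ both contain $g$. That $\Phi$ is a homomorphism, $\Phi(gh)=\Phi(g)\cdot\Phi(h)$, is essentially the continuity observation made when the coarse group was introduced: the inclusion $\Phi(g)\cdot\Phi(h)\subseteq\Phi(gh)$ is trivial since $gh\in AB$ whenever $AB\sqsubseteq C$, while for the reverse, given $C\ni gh$, continuity of multiplication yields a $^*$subgroup $U$ with $(gU)(Uh)=gUh\subseteq C$, so $A=gU\in\Phi(g)$ and $B=Uh\in\Phi(h)$ witness $C\in\Phi(g)\cdot\Phi(h)$. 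Injectivity of $\Phi$ follows because the pointwise stabilisers of finite initial segments form a neighbourhood basis of the identity, so distinct $g,h$ are separated by some open coset.

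The crux is surjectivity together with the claim that $\bigcap x$ is a singleton, and this is the main obstacle: $G$ need not be compact, so one cannot simply invoke the finite intersection property. Given a full filter $x$, let $V_m$ be the pointwise stabiliser of $\{0,\dots,m\}$, and let $a_mV_m\in LC(V_m)\cap x$ and $V_mb_m\in RC(V_m)\cap x$ be the cosets supplied by Definition~\ref{df:filter}(b). Using Axiom~\ref{axiom disjointness of cosets} together with downward directedness (two cosets in a filter cannot be disjoint), these form decreasing chains $a_{m+1}V_{m+1}\subseteq a_mV_m$ and $V_{m+1}b_{m+1}\subseteq V_mb_m$. Reading off the stabiliser cosets, the left chain pins down a function $g(i):=a_m(i)$ for $m\ge i$ and the right chain a function $h(i):=b_m^{-1}(i)$ for $m\ge i$, both well defined by the consistency just established. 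I would then show $g,h$ are mutually inverse permutations: choosing, for large $m$, an actual element $f$ in a common lower bound of $a_mV_m$ and $V_mb_m$ in $x$, one has $f(i)=g(i)$ and $f^{-1}(i)=h(i)$ for $i\le m$, and since $f$ is a genuine bijection the identities $g(h(i))=h(g(i))=i$ fall out. These same $f$ converge to $g$ in $\S$ (both $f\to g$ and $f^{-1}\to h=g^{-1}$ pointwise), and since $G$ is closed, $g\in G$.

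Finally I would verify $\bigcap x=\{g\}$ and $\Phi(g)=x$. Membership $g\in a_mV_m$ for all $m$ gives $g\in\bigcap x$, while any $g'\in\bigcap x$ agrees with $g$ on every input, so $\bigcap x=\{g\}$ and $\Xi(x)=g$ is well defined. For $\Phi(g)=x$: if $A\in x$ then $g\in\bigcap x\subseteq A$, so $A\in\Phi(g)$; conversely if $g\in A$ with $A\in LC(U)$, the unique left $^*$coset of $U$ in $x$ contains $g$, hence equals $A$ by disjointness of distinct cosets (Axiom~\ref{axiom disjointness of cosets}), so $A\in x$. For the topology, $\Phi^{-1}(\hat A)=A$ is open, so $\Phi$ is continuous, and $\Phi(A)=\hat A$ (as $\Phi$ is onto) shows $\Phi$ is open; hence $\Phi$ is the desired group homeomorphism $G\cong\F(\+ M(G))$ with inverse $\Xi$.
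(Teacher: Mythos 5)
Your proposal is correct and follows essentially the same route as the paper's (sketched) proof: you build the permutation $g$ from the left and right cosets $a_mV_m, V_mb_m\in x$ of the pointwise stabilisers exactly as the paper does with its $r_nU_n, U_ns_n$, use a common lower bound in the filter to see that the two candidate functions are mutually inverse, conclude $g\in G$ by closedness, and then verify the bijection and homeomorphism properties. The extra detail you supply (directedness of $\Phi(g)$, the continuity argument for the homomorphism property, the explicit check that $\Phi(A)=\hat A$) is exactly what the paper defers to \cite{Kechris.Nies.etal:18}, so there is nothing to flag.
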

This is essentially contained 
in~\cite{Kechris.Nies.etal:18} (note that $L_g= R_g= \{ A \colon A \ni g\}$ in the notation there). 
\begin{proof}[Sketch of proof.]
Let $x \in  \+ F( \+ M(G))$. We  show  that $\bigcap x$ is non-empty. 

Let $U_n$ be the open subgroup of $G$ consisting of the permutations that fix $0, \ldots, n$. Since $x$ is a full filter, there are permutations $r_n, s_n \in G$ such that $r_n U_n \in x$ and $U_n s_n \in x$. Let $g(n) = r_n(n)$ and $g^*(n)= s_n^{-1}(n)$. As in~\cite{Kechris.Nies.etal:18} one shows that $g^*= g^{-1}$ using that $x$ is a filter. So $g$ is a permutation, and then clearly $g\in \cap x$ since $G$ is closed.   

On the other hand, since the open cosets form a base,   $\bigcap x$ has at most one element. So the   map $\Xi$  is defined.

Let $g \in G$ and let $x \in \+ F(\+ M(G))$.   Trivially $\Xi(\Phi(g)) = g$. 
It is also trivial that  $x \sub y= \Phi(\Xi(x))$. Since $y$ is a full filter this implies $x=y$.

One shows that $\Phi$  preserves the group operations  as in~\cite[after Claim~3.6]{Kechris.Nies.etal:18}. 
Finally  $\Phi^{-1}(\hat A) =   A$  by definition, so   $\Phi$ is a homeomorphism.
\end{proof}

\begin{remark} 
Note that by this argument, the group~$\Aut(G)$ of topological automorphisms of $G$ is naturally isomorphic to $\Aut(\+ M(G))$. Hence $\Aut(G)$ can itself be seen as a closed subgroup of $\S$.
\end{remark} 

\begin{remark} \label{rem:normal} The following illustrates the structure of  a coarse group~$M$ and will be useful when we discuss  coarse groups of profinite groups in Subsection~\ref{ss:profinite}. Recall that a subgroup of a group  is normal if its left cosets coincide with the right cosets. Thus,  we say that a $^*$subgroup $U$ of $M$ is \emph{normal} if $LC(U)=RC(U)$. 

We verify  that for normal $U$, the set  $LC(U)$    carries a canonical group structure. (If $M= \+ M(G)$, then  this group is simply  $G/U$.)   
Note that by Axiom~\ref{axiom products of left and right cosets}, if $A,B \in LC(U)$ then there is a unique $C \in LC(U)$ such that $AB \sqsubseteq C$. (Recall that we write $A\cdot B=C$.) Also,  if $A \in LC(U)$ then by definition $A^\diamond \in RC(U)= LC(U)$. By associativity and the definition of the operation~$\diamond$, $(LC(U), \diamond)$ is a group. 
\end{remark}

\begin{remark} We sketch an alternative   approach to coarse groups which may be useful  e.g.\  in future applications to totally disconnected locally compact (t.d.l.c.) groups.  On the set of open cosets, or the compact open cosets in the t.d.l.c.\ case,  we    take as primitives the  following notions:    the inclusion partial order $\sq$, $^*$subgroups, the two operations mapping a $^*$coset $A$ to  the $^*$subgroups $U,V$ such that $A \in RC(U) $ and $A \in LC(V)$, and  the product $A\cdot B$ of cosets as in Axiom~\ref{axiom products of left and right cosets}, namely, under the assumption  that $A \in LC(V)$ and $B \in RC(V)$ for the same $^*$subgroup~$V$. 

Recall that a groupoid is a small category in which every morphism has an inverse. An \emph{inductive groupoid}  is a groupoid that also has the structure of a meet semilattice, and satisfies some natural axioms positing that  the two structures are compatible~\cite[Section 4.1]{Lawson:98}. In this alternative setting,  a  coarse group   has the structure of an   inductive groupoid, where $A$ as above is seen as a morphism $U \to V$.

The two approaches to coarse groups are equivalent. Each coarse group as developed above can be first-order defined in an inductive groupoid satisfying appropriate additional axioms, and vice versa. It is clear that an inductive groupoid can be defined in the coarse group.   Conversely, suppose we are given an inductive groupoid~$L$. If  $A \in LC(U) $ and $B \in RC(V)$, letting $W= U \wedge V$, we can define  \[ AB \sq C :\LR \ex A' \sq A \ex B' \sq B [ A'\in LC(W) \land B' \in RC(W) \land A' \cdot B' \sq C].\] 
The axioms above up to Axiom~\ref{axiom products of left and right cosets}, but with the exception of Axiom~\ref{axiom disjointness of cosets}, turn into the usual axioms for inductive groupoids OG1-OG3 as in~\cite[Section 4.1]{Lawson:98}. 
 Conversely, formulating appropriate additional axioms for $L$, one can obtain all the axioms we provide. For some  details see \cite[Part~1]{LogicBlog:20}. \end{remark}
\begin{example} 
As  an instructive  example of an inductive groupoid we consider the oligomorphic group~$G= \Aut(\mathbb Q, <)$. The open subgroups of $G$ are the stabilizers of finite sets. If $U,V$ are stabilizers of sets of the same finite cardinality, there is a unique morphism $A \colon U \to V$ in the sense above, corresponding to the order-preserving bijection between the two sets. The inductive groupoid for $\Aut(\mathbb Q, <)$ is   canonically isomorphic to the groupoid of finite order-preserving maps on $\mathbb Q$, with the partial order being   reverse extension. 

A filter $x$ (in either setting) corresponds  to an arbitrary  order-preserving map $\psi$ on $\mathbb Q$. The filter $x$ contains a right coset of each open subgroup if and only if $\psi$ is total, and a left coset of each open subgroup if and only if  $\psi $ is onto. So the set of full filters corresponds to $\Aut (\mathbb Q)$ as expected. (Incidentally, this example shows that in Definition~\ref{df:filter}(b)  we  need both sides, and that not every maximal filter is full.) \end{example}

\subsection{The action of $\+ F(M)$ on $LC(V)$} \

\n Suppose that $V$ is a $^*$subgroup of $M$, and as before let $LC(V)\sub M$ denote its set of left $^*$cosets. We define an action 
\begin{equation} \label{eqn:gamma}\gamma_V\colon \F(M)\curvearrowright LC(V)\end{equation}
by letting \begin{equation} \label{eqn:action} x\cdot A= B \text{ iff } \exists  S\in x   \, [ SA\sqsubseteq B]. \end{equation}  
Note that such a $B$ is unique because $x$ is a filter. For $M=\+ M(G)$, by Prop.~\ref{fact:standard}, $\gamma_V$ is simply the natural left   action of $G$ on the left cosets of the open subgroup $V$.  

We verify  that $\gamma_V$ is defined:   For each full filter $x$, for every $^*$subgroup~$V$ and  each left   $A\in LC(V)$, there is   $B \in LC(V)$  such that $x \cdot A =B$.  
Suppose that    $A\in RC(U)$. 
Since $x$ is a full filter, it contains some $S\in LC(U)$. 
Then $B:=S\cdot A\in LC(V)$ by Axiom \ref{axiom products of left and right cosets}. Hence $x\cdot A=B$.


\begin{claim} \label{cl:BoJo} For each full filter $x$, for each $C \in M$, we have $x \hat {C}= \hat {x\cdot C}$.  \end{claim}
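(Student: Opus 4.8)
The plan is to reduce the identity to the group structure of $\F(M)$ by way of Claim~\ref{subset is correct}, which already guarantees that the map $A\mapsto\hat A$ translates $\sqsubseteq$, the product, and the property of being a left $^*$coset into the corresponding notions in the Polish group $\F(M)$. First I would unfold $x\cdot C$. Writing $C\in RC(U)\cap LC(V)$, the well-definedness argument for $\gamma_V$ given just before the claim (via the definition~(\ref{eqn:action}) and Axiom~\ref{axiom products of left and right cosets}) produces a $^*$coset $S\in LC(U)$ with $S\in x$ such that $B:=x\cdot C=S\cdot C\in LC(V)$ and $SC\sqsubseteq B$. With this notation the goal becomes exactly $x\hat{C}=\hat{B}$.

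The forward inclusion is then immediate. By Claim~\ref{subset is correct}(a), $SC\sqsubseteq B$ yields $\hat{S}\hat{C}\subseteq\hat{B}$; and since $S\in x$ means $x\in\hat{S}$, monotonicity of the set product gives $x\hat{C}\subseteq\hat{S}\hat{C}\subseteq\hat{B}$. For the reverse inclusion I would deliberately avoid manipulating inverses and instead invoke the coset partition. By Claim~\ref{subset is correct}(b), $\hat{V}$ is a subgroup of $\F(M)$, and by Claim~\ref{subset is correct}(e) both $\hat{C}$ and $\hat{B}$ are left cosets of $\hat{V}$. Hence $x\hat{C}$, being a left translate of the left coset $\hat{C}$, is itself a left coset of $\hat{V}$; it is nonempty because $\hat{C}\neq\ES$ by Claim~\ref{build full filter}. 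Two left cosets of the same subgroup are either disjoint or equal, so the inclusion $x\hat{C}\subseteq\hat{B}$ forces $x\hat{C}=\hat{B}=\widehat{x\cdot C}$.

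The main obstacle — really the only step that needs genuine care — is the identification $B=x\cdot C$ together with the production of the correct witness $S\in x$: one must confirm that the coset $S$ supplied by well-definedness of $\gamma_V$ genuinely lies in the filter $x$ and satisfies $SC\sqsubseteq B$, which is precisely what lets Claim~\ref{subset is correct}(a) apply. After that everything is formal, and the coset-partition step is what keeps the argument clean: it converts the single inclusion $x\hat{C}\subseteq\hat{B}$ into the desired equality, sidestepping the awkward implication $SC\sqsubseteq B\Rightarrow S^{\diamond}B\sqsubseteq C$ that a direct two-sided computation of $\widehat{x\cdot C}\subseteq x\hat{C}$ would otherwise demand.
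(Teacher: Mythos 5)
Your proof is correct and follows essentially the same route as the paper: both arguments observe via Claim~\ref{subset is correct}(e) that $x\hat C$ and $\hat{x\cdot C}$ are left cosets of the subgroup $\hat V$, so a single inclusion $x\hat C\subseteq\hat{x\cdot C}$ suffices, and that inclusion is obtained from the witness $S\in x$ with $SC\sqsubseteq x\cdot C$. The only cosmetic difference is that the paper verifies the inclusion directly from the definition of the filter product, whereas you route it through Claim~\ref{subset is correct}(a); both are fine.
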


\begin{proof} Suppose $C \in LC(V)$. By Claim~\ref{subset is correct}(e),  both    $ x \hat C$ and $\hat {x \cdot C}$ are left cosets of $\hat V$. So it suffices to show that 
$x \hat {C} \sub  \hat {x  \cdot  C}$. Let $D = x \cdot C \in LC(V)$. By definition there is $S \in x$ such that $SC \sqsubseteq D$.  If $z \in x \hat C$, then by definition  there is $y$ such that $C \in y$ and $z = x\cdot y$.  So $D \in z$ as required.\end{proof}
\begin{claim} \label{axiom gammaV is a group action} \

\n     For every $^*$subgroup~$V$, $\gamma_V\colon \F(M)\curvearrowright LC(V)$ is a group action. 
\end{claim}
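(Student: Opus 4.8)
The plan is to verify the two defining properties of a left group action of $\F(M)$ on $LC(V)$: that the neutral element $1_{\F(M)}$ acts as the identity, and that the action is compatible with the group multiplication of $\F(M)$, i.e.\ $(x\cdot y)\cdot A = x\cdot(y\cdot A)$ for all $x,y\in\F(M)$ and $A\in LC(V)$. Well-definedness of $\gamma_V$ (that $x\cdot A$ exists and is a unique element of $LC(V)$) has already been established above, so only these two equalities remain. The key tool is Claim~\ref{cl:BoJo}, which identifies $x\widehat C$ with $\widehat{x\cdot C}$; this lets me transport both identities into statements about the setwise product in the Polish group $\F(M)$, where they follow from the group axioms. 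To pass back from hat-sets to elements of $M$ I would first record that $A\mapsto \widehat A$ is injective: if $\widehat A=\widehat B$ then by Claim~\ref{subset is correct}(c) we have $A\sqsubseteq B$ and $B\sqsubseteq A$, so $A=B$ by antisymmetry of $\sqsubseteq$ (Axiom~\ref{basic axioms}(c)).

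For the identity axiom, fix $A\in LC(V)$. Since $1_{\F(M)}$ is the neutral element of $\F(M)$ (Remark~\ref{rem:neutral}), the setwise product $1_{\F(M)}\widehat A$ equals $\widehat A$. On the other hand, Claim~\ref{cl:BoJo} gives $1_{\F(M)}\widehat A=\widehat{1_{\F(M)}\cdot A}$. Hence $\widehat{1_{\F(M)}\cdot A}=\widehat A$, and by injectivity of the hat map, $1_{\F(M)}\cdot A=A$.

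For compatibility, fix $x,y\in\F(M)$ and $A\in LC(V)$; note that $x\cdot y\in\F(M)$ by Claim~\ref{product of filters is a filter} and that $y\cdot A,\ (x\cdot y)\cdot A,\ x\cdot(y\cdot A)$ all lie in $LC(V)$ by well-definedness. Applying Claim~\ref{cl:BoJo} three times, I would compute
\[
\widehat{(x\cdot y)\cdot A}=(x\cdot y)\widehat A = x\bigl(y\widehat A\bigr)=x\,\widehat{y\cdot A}=\widehat{x\cdot(y\cdot A)},
\]
where the middle equality $(x\cdot y)\widehat A=x\bigl(y\widehat A\bigr)$ is just associativity of the group multiplication of $\F(M)$ (Axiom~\ref{axiom product of filters}) applied to each element of $\widehat A$. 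Injectivity of the hat map then yields $(x\cdot y)\cdot A=x\cdot(y\cdot A)$, as required.

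There is no real obstacle here once Claim~\ref{cl:BoJo} is in place: the content of the argument is that Claim~\ref{cl:BoJo} faithfully translates the coset action into left multiplication in $\F(M)$, so that the action axioms reduce to the group axioms already verified for $\F(M)$. The only points requiring care are the injectivity of $A\mapsto\widehat A$ and the bookkeeping that all the products $x\cdot y$ and the action values $y\cdot A$ are indeed defined, both of which are routine given the earlier results.
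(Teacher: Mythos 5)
Your proposal is correct and follows essentially the same route as the paper: both arguments reduce the action axioms to associativity of the filter product via Claim~\ref{cl:BoJo}, and then pass from the equality of hat-sets back to equality in $M$ using Claim~\ref{subset is correct}(c). Your explicit remark on the injectivity of $A\mapsto\hat A$ is a slightly more careful phrasing of the same final step.
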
 
\begin{proof} Clearly $1 \cdot A = A$ for each $A \in LC(V)$ (see Remark~\ref{rem:neutral}).   

For each $C$, $x,y$ by associativity of the  filter product we have  
\[ x (y     \hat C)= x \{ y\cdot z \colon \, z \in C\} = \{(x\cdot y)\cdot z \colon \, z \in C\} = (x\cdot y)  \hat C.\]
Also, by Claim~\ref{cl:BoJo},
\bc  $ x(y    \hat C) = x ( \hat{y\cdot C}) = \hat {x\cdot (y\cdot C)}$ and $(x\cdot y )   \hat C= \hat{(x \cdot y) \cdot C}$. \ec 
So $ \hat {x\cdot (y\cdot C)}=\hat{(x \cdot y) \cdot C}$. Then by Claim~\ref{subset is correct}(c) $x\cdot (y \cdot C) = (x \cdot y) \cdot C$ as required. 
\end{proof}
 Claim~\ref{axiom gammaV is a group action} is equivalent to the statement that $ (x,A) \to x \cdot A$ is an action of $\F(M)$ on $M$.  For, on the basis of  the   axioms so far, $M$ is partitioned into the sets $LC(V)$ for   $^*$-subgroups $V$,    the orbits of this action.    We have provided  the current  formulation of the claim mainly for notational convenience.
%
%

\begin{remark} \label{rem:identify with N} \ 

\n  {\rm Recall that we now regard an  abstract coset  structure $M$ as having   domain~$\omega$. So if $LC(V) \sub \omega$ is infinite we can identify its elements $A_0, A_1, \ldots$ with the natural numbers, and the action $\gamma_V$ can be viewed as an action on $\omega$.}  \end{remark}

\subsection{For  Roelcke precompact $\+ F(M)$,   each open coset    has the form~$\hat A$}

In this section we provide an important  tool. Introducing the  new  Axiom~\ref{axiom Roelcke}  for $M$, we show that if  the Polish group $\+ F(M)$ is Roelcke precompact (see the introduction), then  each open subgroup of $\+ F(M)$ is named by a $^*$subgroup in $M$.  This  will be   needed in Section~\ref{isomorphism is essentially countable} to  verify   that $\+ M( \+ G(M)) \cong M$ for each $M \in \+ B$,    where $\+ B$ is as in   Subsection~\ref{rem:plan} and $\+G(M)$ is a realization of $\+ F(M)$ as a permutation group. We will also apply this  tool to characterise the coarse groups of profinite groups  in Subsection~\ref{ss:profinite}. In this case, we could actually  use a simpler version of Axiom~\ref{axiom Roelcke} that only involves left cosets of a fixed subgroup, rather than double cosets.

Recall our letter conventions: letters $A$ to $F$ and their variants denote elements of $M$ (called $^*$cosets), and letters $U,V,W$ denote $^*$subgroups. 
Also recall from Definition~\ref{def:topo}  that  $\hat{A}=\{x\in \F(M)\colon A\in x\}$.    As always $M$ is a structure with domain $\omega$ in the language with one ternary relation symbol, and we generally  assume  that $M$ satisfies the (still growing) list of axioms.

%
%
%
%
%

Note that by Claim~\ref{subset is correct}  in Section~\ref{isomorphism is essentially countable}   that   the map $A \mapsto \hat A$ is a 1-1 map   from       elements of $M$ to open  cosets  of $\F(M)$. 
  After adding Axiom~\ref{axiom Roelcke}, we will show in Lemma~\ref{all cosets appear in M} that this map is onto,  assuming that $\F(M)$ is  Roelcke precompact:  each open coset  in  $\F(M)$ is of the form $\hat{A}$ for some $A \in M$. 
  
  We begin with  an auxiliary claim.
\begin{claim} \label{correct form of left cosets} 
For any left coset $x\hat{V}$ in $\F(M)$, there is a left $^*$coset $A$ of $V$ in $M$ such that  $x\hat{V}=\hat{A}$. 
\end{claim}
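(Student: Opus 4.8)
The plan is to extract the required witness $A$ directly from the filter $x$, and then verify the identity $x\hat{V}=\hat{A}$ by the standard fact that two cosets of a subgroup which share a point must coincide. No new construction is needed; everything follows from fullness of $x$ together with Claim~\ref{subset is correct}.

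First I would use that $x$ is a full filter. By Definition~\ref{df:filter}(b), the $^*$subgroup $V$ has a left $^*$coset lying in $x$; that is, there is some $A\in LC(V)$ with $A\in x$. This $A$ is the candidate witness, and it is by construction a left $^*$coset of $V$, as required by the statement.

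Next I would appeal to the forward implication of Claim~\ref{subset is correct}(e): since $A\in LC(V)$, the set $\hat{A}$ is a left coset of the subgroup $\hat{V}$ of $\F(M)$ (recall $\hat{V}$ is a subgroup by Claim~\ref{subset is correct}(b)). It then remains only to see that this particular coset is the coset $x\hat{V}$. Because $A\in x$ we have $x\in\hat{A}$, and because $1_{\F(M)}\in\hat{V}$ (Remark~\ref{rem:neutral}) we also have $x=x\cdot 1_{\F(M)}\in x\hat{V}$. Thus $\hat{A}$ and $x\hat{V}$ are two left cosets of $\hat{V}$ both containing $x$; since distinct left cosets of a subgroup of a group are disjoint, they must be equal, giving $x\hat{V}=\hat{A}$.

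I do not expect a genuine obstacle here: the claim is essentially a repackaging of the argument already given for the forward direction of Claim~\ref{subset is correct}(e), combined with property (b) of full filters. The only point requiring care is guaranteeing the existence of a left $^*$coset of $V$ inside $x$, which is precisely what fullness of the filter supplies; once $A$ is in hand, the coset identification is automatic.
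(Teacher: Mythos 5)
Your proof is correct and follows essentially the same route as the paper: both extract $A\in LC(V)\cap x$ from fullness of the filter and then identify $x\hat V$ with $\hat A$. The only cosmetic difference is that the paper re-derives the two inclusions $x\hat V\subseteq\hat A$ and $\hat A\subseteq x\hat V$ by hand, whereas you outsource this to Claim~\ref{subset is correct}(e) together with the standard fact that two left cosets of a subgroup sharing the point $x$ coincide --- which is legitimate, since the argument proving (e) is exactly that computation.
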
 
\begin{proof} 
Since $x$ is a full filter, there is some left $^*$coset $A$ of $V$ in $x$. 
We claim that $x\hat{V}= \hat{A}$. We have $x\hat{V}\subseteq \hat{A}\hat{V}=\hat{V}$, since $A\in x$ and $\hat{A}$ is a left coset of $\hat{V}$ by Claim \ref{subset is correct}. To see that $\hat{A}\subseteq x\hat{V}$, let $y\in \hat{A}$. Since $x,y\in \hat{A}$, $x^{-1}y\in \hat{A}^{-1} \hat A=\hat{A^\diamond} \hat A\sqsubseteq \hat{V}$ by Claim \ref{subset is correct}. Thus $y  \in x\hat{V}$. 
\end{proof}

Consider any open subgroup $\U$ of $\F(M)$. Since $\U$ is open and $1_{\F(M)}\in \U$, there is an  $A$ in $M$ with $1_{\F(M)}\in \hat{A}$ and $\hat{A}\subseteq \U$. Now $A$ is equal to a $^*$subgroup $V$ in $M$, since $1_{\F(M)}$ contains only $^*$subgroups by Axiom \ref{axiom disjointness of cosets} and directedness of full filters. By Roelcke precompactness of $\F(M)$, the subgroup  $\U$ is a union of finitely may double cosets of the form $\hat{V}x\hat{V}$.   
Then, 
by the foregoing claim,  $\U=\bigcup_{i<n} \hat{V}\hat{A}_i$ for some left $^*$cosets $A_i$ of $V$ in $M$. 

To reach our goal, the main point is to  show that  each open subgroup in $\F(M)$ is of the form $\hat{U}$ for some $^*$subgroup $U$ in $M$. It now suffices to  introduce an axiom ensuring that a finite union of double cosets of $\hat{V}$ that is closed under products and inverses equals  $\hat{U}$ for some subgroup $U$ in~$M$.  First we need to establish  three claims; each one asserts that a  certain semantic condition in $\+ F(M)$  is  first-order  definable in $M$. 
\begin{claim}[Formula $\phi$]  \label{express nonempty intersection}  There is a first-order formula $\phi$ such that \bc $M \models \phi(A,B,C) $ iff 
$\hat{A}\hat{B}\cap \hat{C} = \emptyset$. \ec \end{claim}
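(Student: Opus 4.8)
The plan is to replace the semantic condition about the group $\F(M)$ by a purely combinatorial condition inside $M$, and then read the desired formula off from its negation. Concretely, I would first establish the equivalence
\[ \hat A \hat B \cap \hat C \neq \emptyset \ \Longleftrightarrow\ \exists A' \exists B'\,\big( A' \sqsubseteq A \ \wedge\ B' \sqsubseteq B \ \wedge\ A'B'\sqsubseteq C\big). \]
Granting this, the formula $\phi(A,B,C) := \forall A'\forall B'\,\big((A'\sqsubseteq A \wedge B'\sqsubseteq B)\to \neg R(A',B',C)\big)$ does the job, since $A'B'\sqsubseteq C$ is literally the atomic formula $R(A',B',C)$, and the relation $\sqsubseteq$ is first-order definable by Definition~\ref{df:main}(c). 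Thus once the displayed equivalence is in hand, the only remaining point is to note that all ingredients of $\phi$ are first-order expressible, which has already been arranged.

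For the forward direction I would argue as follows. Suppose $z\in\hat A\hat B\cap\hat C$. By definition of the filter product, $z=x\cdot y$ for full filters $x\ni A$ and $y\ni B$, and $C\in z$. Unfolding the definition of $x\cdot y$, the membership $C\in x\cdot y$ yields witnesses $A_0\in x$ and $B_0\in y$ with $A_0B_0\sqsubseteq C$. Using that $x$ and $y$ are directed downwards, I choose $A'\in x$ below both $A$ and $A_0$, and $B'\in y$ below both $B$ and $B_0$; then $A'\sqsubseteq A$ and $B'\sqsubseteq B$, and by the monotonicity axiom (Axiom~\ref{ax:order}) applied to $A_0B_0\sqsubseteq C$ we get $A'B'\sqsubseteq C$, as required.

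For the converse, suppose $A'\sqsubseteq A$, $B'\sqsubseteq B$ satisfy $A'B'\sqsubseteq C$. Here I would invoke Claim~\ref{build full filter} to pick full filters $x\ni A'$ and $y\ni B'$. Upward closure of filters gives $A\in x$ and $B\in y$, so $x\in\hat A$ and $y\in\hat B$; meanwhile $A'\in x$, $B'\in y$ together with $A'B'\sqsubseteq C$ give $C\in x\cdot y$ directly from the definition of the product. Hence $x\cdot y\in\hat A\hat B\cap\hat C$, and the intersection is nonempty.

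The only genuinely nontrivial point is this converse direction, where one must actually produce witnessing full filters rather than merely manipulate them; this is exactly where Claim~\ref{build full filter}, and hence the standing assumption that $M$ is countable, is used. Everything else is routine bookkeeping with the axioms. I would double-check in passing that the derived characterization agrees with the intended meaning in $\+ M(G)$: there $\hat A\hat B\cap\hat C\neq\emptyset$ corresponds to $AB\cap C\neq\emptyset$ as subsets of $G$, and continuity of multiplication together with the open cosets forming a base furnishes exactly the shrunk cosets $A'\subseteq A$, $B'\subseteq B$ with $A'B'\subseteq C$, confirming that the combinatorial condition is the correct one.
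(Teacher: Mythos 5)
Your proposal is correct, and your formula $\phi$ is exactly the one the paper uses: ``there are no $D\sqsubseteq A$, $E\sqsubseteq B$ with $DE\sqsubseteq C$.'' The converse direction is also essentially the paper's argument (produce full filters through the witnesses via Claim~\ref{build full filter} and observe that their product lands in $\hat{C}$). The one genuine difference is in the forward direction: the paper takes $x\in\hat{A}$, $y\in\hat{B}$ with $x\cdot y\in\hat{C}$ and invokes continuity of the group operation from Prop.~\ref{filter group is Polish} to shrink to basic open sets $\hat{D}\subseteq\hat{A}$, $\hat{E}\subseteq\hat{B}$ with $\hat{D}\hat{E}\subseteq\hat{C}$, then translates back into $M$ via Claim~\ref{subset is correct}; you instead unfold the definition of the filter product to get $A_0\in x$, $B_0\in y$ with $A_0B_0\sqsubseteq C$ and then use downward directedness of $x$, $y$ together with the monotonicity Axiom~\ref{ax:order} to pass to common refinements $A'\sqsubseteq A, A_0$ and $B'\sqsubseteq B, B_0$. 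Your route is more elementary and self-contained, relying only on the combinatorial axioms rather than on the previously established topological-group structure of $\F(M)$; the paper's route is shorter given that continuity is already on the table and is of a piece with its systematic use of Claim~\ref{subset is correct} to move between $M$ and $\F(M)$. Both are sound, and your closing sanity check against the intended meaning in $\+ M(G)$ matches the paper's motivating remarks.
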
 
\begin{proof} 
$\phi(A,B,C)$ expresses that    \bc there are \emph{no}  $D\sqsubseteq A$ and $E\sqsubseteq B$ with $D E\sqsubseteq C$.  \ec

For the implication from left to right, by  contraposition suppose  that $  \hat{A}\hat{B} \cap \hat{C} \neq\emptyset $. Let $x\in\hat{A}$ and $y\in \hat{B}$ with $xy\in \hat{C}$. Since the group operation on $\F(M)$ is continuous by Prop.\  \ref{filter group is Polish} and $\hat{C}$ is open, there are basic open subsets $\hat{D}\subseteq \hat{A}$ and $\hat{E}\subseteq \hat{B}$ with $\hat{D}\hat{E} \subseteq \hat{C}$. 
Then $D E\sqsubseteq C$ by Claim \ref{subset is correct}. 

For the    implication from right to left, by contraposition suppose that  $D\sqsubseteq A$,  $E\sqsubseteq B$ and $DE \sqsubseteq C$. Then every $x\in \hat{D} \hat{E}$ is an element of $\hat{A}\hat{B}\cap\hat{C}$. 
\end{proof} 

In the following, we repeatedly use that double cosets of $\F(M)$ of the form $\hat V \hat A$, where $A$ is a left $^*$coset of $V$,   are clopen. This follows from  the hypothesis that $\F(M)$ is Roelcke precompact.
\begin{claim}[Formulas $\psi_n$] 
For each $n\geq 1$, there is a first-order formula $\psi_n$ such that  \bc $M \models  \psi_n(A_0,\dots,A_{n-1},B,V)$  iff  $\hat{B}\subseteq \bigcup_{i<n}\hat{V}\hat{A}_i$,  \ec for all $B$ and all  left $^*$cosets $A_0,\dots,A_{n-1}$ of $V$. 
\end{claim}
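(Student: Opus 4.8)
The plan is to express the condition $\hat{B}\subseteq \bigcup_{i<n}\hat{V}\hat{A}_i$ purely in terms of the relation $R$ on $M$, building directly on the formula $\phi$ from Claim~\ref{express nonempty intersection}. First I would reduce the containment of clopen sets to a statement about individual elements of $M$. Since $\hat B$ is a clopen (indeed, basic open) subset of $\F(M)$ and each $\hat V \hat A_i$ is open, the containment $\hat B \subseteq \bigcup_{i<n}\hat V \hat A_i$ should be analysable by looking at the full filters $x \in \hat B$. The key reduction I expect to use is that $x \in \hat{V}\hat{A}_i$ exactly when $x$ lies in a double coset, and that $\hat V \hat A_i$ is itself a union of left cosets $\hat C$ of $V$ with $C \in M$; concretely, a point $x \in \hat B$ fails to be covered iff there is a sufficiently small $^*$coset $D \sqsubseteq B$ with $D$ disjoint from every $\hat{V}\hat{A}_i$.

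The natural route is therefore a compactness/finiteness argument at the level of $M$. I would argue that $\hat B \subseteq \bigcup_{i<n}\hat V \hat A_i$ holds iff \emph{every} left $^*$coset $D \sqsubseteq B$ of a small enough $^*$subgroup meets some $\hat V \hat A_i$, and then use $\phi$ to express ``$\hat D \hat{A_i}^{\diamond}\cap \hat V \neq \emptyset$'' or the equivalent statement that $\hat D$ meets $\hat V \hat A_i$. Since $x \in \hat V \hat A_i \Leftrightarrow x \cdot A_i^\diamond \in \hat V \Leftrightarrow \hat{x}\hat{A_i^\diamond}\cap \hat V \neq \emptyset$, the membership of a filter in a double coset can be captured by the nonempty-intersection formula $\phi$ applied with the appropriate arguments, using the $^\diamond$ operation (available by Axiom~\ref{axiom inverses 1}) and Claim~\ref{subset is correct}(d). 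Thus $\psi_n$ would assert, over the finite disjunction $i<n$: for all $^*$subgroups $W \sqsubseteq V$ (or rather, by using the earlier existence claims, for all left $^*$cosets $D \sqsubseteq B$), there exists $i<n$ with $\lnot\phi(D, A_i^\diamond, V)$, i.e.\ $\hat D \hat{A_i^\diamond} \cap \hat V \neq\emptyset$.

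The main obstacle I anticipate is the quantifier-complexity bookkeeping: the statement ``$\hat B$ is covered'' is a priori a statement about uncountably many filters $x$, and turning it into a \emph{first-order} condition over $M$ requires replacing the universal quantifier over $x \in \hat B$ by a universal quantifier over $^*$cosets $D \sqsubseteq B$ (which \emph{is} first-order) without losing logical equivalence. The soundness of this replacement rests on the fact that the $\hat D$ for $D \sqsubseteq B$ form a basis of clopen neighbourhoods covering $\hat B$ (Claim~\ref{axiom existence of full filters} and Claim~\ref{build full filter} guarantee enough small cosets), combined with the openness of each $\hat V \hat A_i$. The delicate direction is showing that if every small $\hat D \sqsubseteq \hat B$ meets the union, then $\hat B$ is actually contained in it: here I would argue contrapositively, taking an $x \in \hat B$ not covered by any $\hat V \hat A_i$, and using that the complement of the finite union $\bigcup_{i<n}\hat V \hat A_i$ is open to find a small $\hat D \ni x$, $D \sqsubseteq B$, entirely outside the union --- contradicting the first-order condition. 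The finiteness of the index set $n$ is essential for this openness of the complement, which is why the formula must be tailored to each fixed $n$.
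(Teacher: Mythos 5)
Your proposal is correct and follows essentially the same route as the paper: replace the quantifier over filters $x\in\hat B$ by a first-order quantifier over $^*$cosets $D\sqsubseteq B$, and argue the delicate direction contrapositively using that the finite union of clopen double cosets has open complement, so a non-covered $x$ lies in some $\hat D\subseteq\hat B$ missing the whole union. The only (harmless) difference is that you express ``$\hat D$ meets $\hat V\hat A_i$'' as $\lnot\phi(D,A_i^\diamond,V)$ via the inverse operation, whereas the paper uses $\phi(V,A_i,C)$ directly; these are equivalent.
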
 
\begin{proof} 
$\psi_n(A_0,\dots,A_{n-1},B,V)$ expresses that  \bc there is \emph{no}  $C\sqsubseteq B$ such that for all $i<n$, $  \phi(V,A_i,C)$.  \ec

First suppose that  $\hat{B}\not\subseteq \bigcup_{i<n}\hat{V}\hat{A}_i$. Since $\bigcup_{i<n}\hat{V}\hat{A}_i$ is clopen, there is some $C$ with $\hat{C}\subseteq \hat{B}$ and $\bigwedge_{i<n}\hat{V}\hat{A}_i \cap \hat{C}=\emptyset$. Then $C\sqsubseteq B$ by Claim \ref{subset is correct},  and $\bigwedge_{i<n}\phi(V,A_i,C)$ by Claim \ref{express nonempty intersection}. 

Conversely, assume that there is some $C\sqsubseteq B$ with $\bigwedge_{i<n} \phi(V,A_i,C)$. Then $\hat{C}\sqsubseteq \hat{B}$ by Claim \ref{subset is correct} and $\bigwedge_{i<n}\hat{V}\hat{A}_i\cap \hat{C}=\emptyset$ by Claim \ref{express nonempty intersection}. Hence $\hat{B}\not\subseteq \bigcup_{i<n}\hat{V}\hat{A}_i$. 
\end{proof} 

\begin{claim}[Formulas $\theta_n$]  
For each $n\geq 1$, there is a first-order formula $\theta$ such that  \bc    $M \models \theta_n(A_0,\dots,A_{n-1},V)$ iff  $\bigcup_{i<n}\hat{V}\hat{A}_i$ is a subgroup of $\F(M)$, \ec  for all left $^*$cosets $A_0,\dots,A_{n-1}$ of $V$. 
\end{claim}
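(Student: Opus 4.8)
The plan is to assemble $\theta_n$ as a first-order combination of the formula $\phi$ from Claim~\ref{express nonempty intersection}, the formula $\psi_n$ from the previous claim, and the definable relation ``$C\in LC(V)$'' (Definition~\ref{df:main}(b)). Write $\U=\bigcup_{i<n}\hat V\hat A_i$. Since each $A_i\in LC(V)$, each $\hat V\hat A_i$ is a double coset of the open subgroup $\hat V$, so $\U$ is invariant under left and right multiplication by $\hat V$; in particular $\U\hat V=\U$ and $\U$ is a union of double cosets of $\hat V$. I will also use that $\psi_n(A_0,\dots,A_{n-1},B,V)$ expresses ``$\hat B\subseteq\U$'' for an \emph{arbitrary} argument $B$, not only for left $^*$cosets of $V$. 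A recurring auxiliary fact is that for a left coset $\hat C$ of $\hat V$ one has $\hat C\subseteq\U\iff\hat V\hat C\subseteq\U$: the nontrivial direction holds because $\hat C$ lies in the single double coset $\hat V\hat C$, which, meeting the union of double cosets $\U$, must be one of its pieces.

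For closure under products I reduce to a coset-by-coset test. Since $\hat A_i\hat A_j$ is right $\hat V$-invariant, a left coset $\hat C$ of $\hat V$ meets $\hat A_i\hat A_j$ iff it is contained in it. As $\U\U=\bigcup_{i,j}\hat V\hat A_i\hat A_j$, and $\hat V\hat A_i\hat A_j$ is the union of the double cosets $\hat V\hat C$ over the left cosets $\hat C$ of $\hat V$ contained in $\hat A_i\hat A_j$, the observation of the previous paragraph yields
\[\U\U\subseteq\U \iff \bigwedge_{i,j<n}\forall C\big[\,(C\in LC(V)\wedge\neg\phi(A_i,A_j,C))\to\psi_n(A_0,\dots,A_{n-1},C,V)\,\big],\]
using that $\neg\phi(A_i,A_j,C)$ says $\hat A_i\hat A_j\cap\hat C\neq\emptyset$ by Claim~\ref{express nonempty intersection}, and $\psi_n(\dots,C,V)$ says $\hat C\subseteq\U$.

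Closure under inverses is handled through the involution $\diamond$. By Claim~\ref{subset is correct}(d) we have $\hat{A_i^\diamond}=(\hat A_i)^{-1}$, and $A_i\in LC(V)$ gives $A_i^\diamond\in RC(V)$, so $(\hat V\hat A_i)^{-1}=(\hat A_i)^{-1}\hat V^{-1}=\hat{A_i^\diamond}\hat V$. Because $\U\hat V=\U$, we get $\hat{A_i^\diamond}\hat V\subseteq\U\iff\hat{A_i^\diamond}\subseteq\U$, and the latter is exactly $\psi_n(A_0,\dots,A_{n-1},A_i^\diamond,V)$. Hence $\U^{-1}\subseteq\U$ iff $\bigwedge_{i<n}\psi_n(A_0,\dots,A_{n-1},A_i^\diamond,V)$. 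I therefore let $\theta_n(A_0,\dots,A_{n-1},V)$ be the conjunction of the product-closure formula displayed above with this inverse-closure formula. The identity need not be asserted separately: $\U$ is nonempty, and a nonempty subset closed under products and under inverses contains $1_{\F(M)}$, so $\U$ is a subgroup exactly when both closure conjuncts hold.

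It remains to verify the equivalence in both directions. If $\U$ is a subgroup, then $\U\U=\U$ and $\U^{-1}=\U$, from which each conjunct follows at once (for products, any left coset $\hat C\subseteq\hat A_i\hat A_j\subseteq\U$; for inverses, $\hat{A_i^\diamond}=(\hat A_i)^{-1}\subseteq\U^{-1}=\U$). Conversely, if $\theta_n$ holds then $\U$ is a nonempty subset of $\F(M)$ closed under products and under inverses, hence a subgroup. The correctness of $\psi_n$ for an arbitrary penultimate argument and of $\phi$ is quoted from the preceding claims, and ``$C\in LC(V)$'' is first-order by Definition~\ref{df:main}(b). The main point to get right is the product-closure translation: the product of two double cosets is spread over several double cosets and is named by no single $^*$coset, so it must be probed coset-by-coset, and it is precisely the right $\hat V$-invariance of $\hat A_i\hat A_j$ that converts the semantic condition $\hat C\cap\hat A_i\hat A_j\neq\emptyset$ into the definable $\neg\phi(A_i,A_j,C)$ while simultaneously guaranteeing $\hat C\subseteq\hat A_i\hat A_j$.
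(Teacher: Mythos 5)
Your proof is correct, but it takes a genuinely different route from the paper on the key step, closure under products. The paper expresses the \emph{failure} of closure, $\hat{V}\hat{A}_j\hat{A}_l\not\subseteq\U$, by an existential formula $\rho_n$ asserting the existence of $B\sqsubseteq V$, $C\sqsubseteq A_j$, $D\sqsubseteq A_l$ and $E,F$ with $BC\sqsubseteq E$, $ED\sqsubseteq F$ and $F$ disjoint from every $\hat{V}\hat{A}_i$; verifying that $\rho_n$ captures this runs through two applications of continuity of the group operation (Prop.~\ref{filter group is Polish}) together with clopenness of the union of double cosets. You instead probe the product coset-by-coset: quantifying over $C\in LC(V)$, you require that whenever $\hat{C}$ meets $\hat{A}_i\hat{A}_j$ (via $\neg\phi$) it is contained in $\U$ (via $\psi_n$), with the right $\hat{V}$-invariance of $\hat{A}_i\hat{A}_j$ converting ``meets'' into ``is contained in''. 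This is more economical --- it reuses $\psi_n$ wholesale and replaces the continuity argument by elementary double-coset algebra --- and your explicit treatment of inverses via $\psi_n(A_0,\dots,A_{n-1},A_i^\diamond,V)$ fills in a step the paper leaves to the reader; the observation that the identity comes for free from nonemptiness plus the two closure conditions matches what the paper tacitly assumes. One point you should make explicit: the soundness of replacing the semantic quantifier ``for every left coset $\hat{C}$ of $\hat{V}$ in $\F(M)$'' by the first-order quantifier ``for every $C\in LC(V)$ in $M$'' requires that every left coset of $\hat{V}$ in $\F(M)$ is of the form $\hat{C}$ for some $C\in M$; this is exactly Claim~\ref{correct form of left cosets} (together with Claim~\ref{subset is correct}(e)), which is available at this point and should be cited.
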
 
\begin{proof} 
Note that   $\hat{V}\hat{A}_j \hat{V}\hat{A}_l= \hat{V}\hat{A}_j \hat{A}_l$ for all $j,l<n$. 

We first express that    $\bigcup_{i<n} \hat{V}\hat{A}_i$ is closed under products. We will show that for all $j,l<n$, the statement $\hat{V}\hat{A}_j\hat{A}_l\not\subseteq \bigcup_{i<n}\hat{V}\hat{A}_i$ is equivalent to the following first-order formula $\rho_n(A_0,\dots,A_{n-1},V)$ in $M$: 
there are $B\sqsubseteq V$, $C\sqsubseteq A_j$, $D\sqsubseteq A_l$ and $E,F$ with $B C\sqsubseteq E$, $E D\sqsubseteq F$ and $\bigwedge_{i<n} \phi(V,A_i,F)$. 

Suppose first that $\rho(A_0,\dots,A_{n-1},V)$ holds in $M$ via $D, E$ and $F$. Take any $x\in \hat{B}$, $y\in \hat{C}$ and $z\in \hat{D}$. Then $x\cdot y\cdot z\in \hat{V}\hat{A}_j\hat{A}_l$ by Claim \ref{subset is correct} and by hypothesis $x\cdot y\cdot z\in\hat{F}$. Since $M \models \bigwedge_{i<n} \phi(V,A_i,T)$, we have $x\cdot y\cdot z\notin \bigcup_{i<n}\hat{V}\hat{A}_i$ by Claim \ref{express nonempty intersection}. 

Suppose conversely that $\hat{V}\hat{A}_j\hat{A}_l\not\subseteq \bigcup_{i<n}\hat{V}\hat{A}_i$ and take some $x\in\hat{V}$, $y\in \hat{A}_j$ and $z\in\hat{A}_l$ with $x\cdot y\cdot z \notin \bigcup_{i<n}\hat{V}\hat{A}_i$. 
Since $\bigcup_{i<n}\hat{V}\hat{A}_i$   is clopen, there is $F$ disjoint from $\bigcup_{i<n}\hat{V}\hat{A}_i$ with $x\cdot y\cdot z\in \hat{F}$. 
By continuity in Prop.\  \ref{filter group is Polish}, there is $\hat{E}\subseteq \hat{V}\hat{A}_j$ such that  $x\cdot y\in \hat{E}$,  and $\hat{D}\subseteq\hat{A}_l$ such that  $z\in \hat{D}$ and $\hat{E}\hat{D}\subseteq \hat{F}$. 
Again by continuity, there is  $B\subseteq\hat{V}$ such that   $x\in \hat{B}$,  and $C\subseteq\hat{A}_j$ such that   $y\in \hat{D}$ and $\hat{B} \hat{D}\subseteq E$. Now $\rho_n(A_0,\dots,A_{n-1},V)$ holds via $D,E$ and $F$ by Claims \ref{subset is correct} and \ref{express nonempty intersection}. 

Similarly, one can express that  $\bigcup_{i<n} \hat{V}\hat{A}_i$ is closed under inverses using the~$\diamond$ operation in Axiom~\ref{axiom inverses 1}. We leave this case  to the reader.
\end{proof} 

We are now ready  to express  the next axiom about an $L$-structure $M$. It~is the conjunction of an infinite set of first-order sentences. Note that its  conclusion   is equivalent to $\bigcup_{i<n} \hat{V} \hat{A}_i=\hat{U}$. 
\begin{axiom} \label{axiom Roelcke} 
Let  $n\ge 1$. Let $A_i\in LC(V)$ 
for all $i<n$. 

\n If  $\theta_n(A_0,\dots,A_{n-1},V)$ holds, then there is a $^*$subgroup $U$ such  that  
 \bc $\bigwedge_{i<n} [ V A_i\sqsubseteq U]$ and $\psi_n(A_0,\dots,A_{n-1},U,V)$.  \ec
\end{axiom} 

\begin{lemma} \mbox{} \label{all cosets appear in M}  Suppose that the Polish group $\+F(M)$ is Roelcke precompact.
\begin{enumerate-(a)} 
\item 
Every open subgroup $\U$ of $\F(M)$  equals $\hat{U}$ for some $^*$subgroup $U$ in $M$. 
\item 
Every open coset   in $\F(M)$ equals $\hat{A}$ for some $A$ in $M$. 
\end{enumerate-(a)} 
\end{lemma}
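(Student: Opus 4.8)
The plan is to prove (a) first, since (b) then follows almost immediately from (a) together with Claim~\ref{correct form of left cosets}. For (a) essentially all of the geometric content has already been extracted in the paragraph preceding the lemma: using that $\F(M)$ is Roelcke precompact and applying Claim~\ref{correct form of left cosets} to each double coset, an arbitrary open subgroup $\U \supseteq \hat V$ of $\F(M)$ is written as a finite union of double cosets $\U = \bigcup_{i<n}\hat V\hat A_i$, where $V$ is a $^*$subgroup and $A_0,\dots,A_{n-1}\in LC(V)$. So the one remaining task is to realize $\U$, a priori only a semantic object living in $\F(M)$, as $\hat U$ for some genuine element $U$ of $M$. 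This is precisely what Axiom~\ref{axiom Roelcke} is built to accomplish, and the work is to check that its hypothesis is met and that its conclusion yields the two inclusions $\U\subseteq\hat U$ and $\hat U\subseteq\U$.

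Concretely, for (a) I would start from $\U = \bigcup_{i<n}\hat V\hat A_i$ and observe that $\U$ is by assumption a subgroup of $\F(M)$; hence, by the claim defining the formula $\theta_n$ (which expresses exactly that $\bigcup_{i<n}\hat V\hat A_i$ is a subgroup), we have $M\models\theta_n(A_0,\dots,A_{n-1},V)$. Axiom~\ref{axiom Roelcke} then produces a $^*$subgroup $U$ of $M$ satisfying $\bigwedge_{i<n}[VA_i\sqsubseteq U]$ and $\psi_n(A_0,\dots,A_{n-1},U,V)$. Reading the first clause through Claim~\ref{subset is correct}(a) gives $\hat V\hat A_i\subseteq\hat U$ for each $i$, hence $\U=\bigcup_{i<n}\hat V\hat A_i\subseteq\hat U$; the second clause, by the defining property of $\psi_n$, gives $\hat U\subseteq\bigcup_{i<n}\hat V\hat A_i=\U$. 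Combining the two yields $\U=\hat U$, as required.

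For (b), let $\mathcal A$ be any open coset of $\F(M)$. Since in any topological group a left coset $a\U$ of an open subgroup equals the right coset $(a\U a^{-1})a$ and conversely, I may assume $\mathcal A=x\U$ with $\U$ an open subgroup. By (a), $\U=\hat U$ for some $^*$subgroup $U$, and applying Claim~\ref{correct form of left cosets} to the left coset $x\hat U$ produces some $A\in LC(U)$ with $x\hat U=\hat A$; thus $\mathcal A=\hat A$. The right-coset case needs no separate treatment because every right coset is a left coset of the conjugate subgroup.

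The genuine difficulty of the whole development is upstream of the lemma: it lies in the three definability claims (for $\phi$, $\psi_n$, $\theta_n$) and in framing Axiom~\ref{axiom Roelcke} so that the semantic facts ``is a subgroup of $\F(M)$'' and ``is contained in a union of double cosets'' become first-order expressible in $M$. Granting those, the lemma itself has no hard step, and the only care required is bookkeeping: matching the index $n$ and the cosets $A_i$ consistently across the pre-lemma reduction, the $\theta_n$-claim, and the axiom, and correctly passing between containments in $M$ (written with $\sqsubseteq$) and containments in $\F(M)$ (written with $\subseteq$) via Claim~\ref{subset is correct}. The one conceptual point worth flagging is that this argument implicitly forces $M$ to \emph{contain} such a witnessing $^*$subgroup $U$ whenever $\F(M)$ is Roelcke precompact; this existence is delivered by the axiom rather than assumed.
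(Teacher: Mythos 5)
Your proposal is correct and follows essentially the same route as the paper: write $\U$ as a finite union of double cosets $\bigcup_{i<n}\hat V\hat A_i$ using Roelcke precompactness and the discussion preceding the lemma, verify $\theta_n$, invoke Axiom~\ref{axiom Roelcke}, and translate its conclusion back into $\F(M)$ via Claim~\ref{subset is correct} and $\psi_n$; part (b) is then Claim~\ref{correct form of left cosets}. The paper states this more tersely (noting just before the axiom that its conclusion is equivalent to $\bigcup_{i<n}\hat V\hat A_i=\hat U$), but your unpacking of the two inclusions and your reduction of right cosets to left cosets of a conjugate subgroup are exactly the intended details.
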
 
\begin{proof} 
\

\n  (a) As remarked above, there are a $^*$subgroup $V$ and left $^*$cosets $A_0,\dots, A_{n-1}$ of   $V$ such that $\U=\bigcup_{i<n}\hat{V}\hat{A}_i$.  Axiom \ref{axiom Roelcke}      yields a $^*$subgroup $U$ in $M$
such  that $\U=\hat{U}$. 
 
\n (b)  now follows from Claim \ref{correct form of left cosets}. 
\end{proof}

\subsection{The profinite case} \label{ss:profinite} Recall  from the introduction that all our topological groups are separable, and    that a topological  group is profinite if and only if it is isomorphic to a compact subgroup of $\S$.
We show that the coarse groups of profinite   groups can be characterised with only one further axiom, stated as part of Proposition~\ref{prop:profinite}. By the form this axiom takes, and  given that the foregoing axioms   either determine arithmetical classes or can be replaced by   axioms which do so, this shows that the class of such coarse groups (with domain~$\omega$)  is arithmetical. 

In the following recall that $U,V$ range over $^*$subgroups.  In Remark~\ref{rem:normal}  we discussed normal $^*$subgroups $V$, defined by the condition $LC(V)= RC(V)$. 
\begin{prop}  \label{prop:profinite} Suppose $M$ satisfies the Axioms through to \ref{axiom Roelcke} introduced so far.  Then $\+ F(M)$ is compact $\LR$  $M$ satisfies the condition

\hfill $\forall U \exists V \sqsubseteq U \, [ LC(V)= RC(V)] \ \land \
\forall U  \, [LC(U) \text{ is finite}]$. \end{prop}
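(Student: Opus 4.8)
The plan is to prove both directions by connecting compactness of $\+F(M)$ to the finiteness structure visible inside $M$, using the recovery tool Lemma~\ref{all cosets appear in M} that every open coset of $\+F(M)$ has the form $\hat A$. First I would establish the general fact that a totally disconnected Polish group is compact if and only if it is Roelcke precompact and each of its open subgroups has finite index (equivalently, each open subgroup has only finitely many cosets); compactness forces a neighborhood basis of open \emph{normal} subgroups of finite index, while conversely such a basis yields an inverse limit of finite groups, hence a profinite, compact group.

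For the forward direction, assume $\+F(M)$ is compact. Compactness implies Roelcke precompactness, so the hypotheses of Lemma~\ref{all cosets appear in M} apply and the map $A\mapsto\hat A$ is a bijection between $M$ and the open cosets of $\+F(M)$, carrying $^*$subgroups to open subgroups (Claim~\ref{subset is correct}(b),(e)) and $LC(U)$ bijectively onto the left cosets of $\hat U$ (Claim~\ref{subset is correct}(e)). Since $\hat U$ is an open subgroup of a compact group it has finite index, so $LC(U)$ is finite. For the normality clause, compact totally disconnected groups have a neighborhood basis of the identity consisting of open \emph{normal} subgroups; given $U$, choose an open normal subgroup $\+N\subseteq\hat U$ of $\+F(M)$, realize it as $\hat V$ with $V\sqsubseteq U$ via Lemma~\ref{all cosets appear in M}(a) and Claim~\ref{subset is correct}(c), and transfer normality back to $M$: since $\hat V$ is normal its left and right cosets coincide, and using Claim~\ref{subset is correct}(e) for both left and right cosets this translates to $LC(V)=RC(V)$.

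For the converse, assume the stated condition. The second conjunct says every $LC(U)$ is finite; by Claim~\ref{subset is correct}(e) this means every open subgroup $\hat U$ of $\+F(M)$ has finitely many left cosets, i.e.\ finite index. The first conjunct provides, below every $^*$subgroup $U$, a normal $^*$subgroup $V$ with $LC(V)=RC(V)$; by Claim~\ref{subset is correct}(e) again, $\hat V$ is then a normal open subgroup of $\+F(M)$ contained in $\hat U$. Since the $\hat U$ form a neighborhood basis of the identity in the Polish group $\+F(M)$ (by Proposition~\ref{filter group is Polish} and Definition~\ref{def:topo}), we obtain a neighborhood basis of open normal subgroups, each of finite index. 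Hence $\+F(M)$ is the inverse limit of the finite quotient groups $\+F(M)/\hat V$, so it is profinite and therefore compact.

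The main obstacle I expect is the normality clause in the forward direction: to run the argument cleanly I must verify that a \emph{compact} totally disconnected group genuinely has a neighborhood basis of open \emph{normal} finite-index subgroups (the van Dantzig-type fact), and then correctly transport normality across the correspondence $A\mapsto\hat A$ — in particular confirming that the normal $^*$subgroup $V$ produced can be taken $\sqsubseteq U$ and that $LC(V)=RC(V)$ is exactly the image of the topological normality of $\hat V$ under Claim~\ref{subset is correct}(e). The other steps (finiteness of $LC(U)\Leftrightarrow$ finite index of $\hat U$, and assembling a profinite inverse limit from a normal finite-index basis) are routine once the correspondence is in place.
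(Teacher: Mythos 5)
Your forward direction matches the paper's: compactness gives Roelcke precompactness, so Lemma~\ref{all cosets appear in M} applies; open subgroups of a compact group have finite index and contain open normal subgroups, and both facts transfer to $M$ through Claim~\ref{subset is correct}. The gap is in the converse. From a neighbourhood basis of open normal finite-index subgroups $\hat N_0 \sqsupseteq \hat N_1\sqsupseteq\cdots$ of $\+F(M)$ you conclude ``hence $\+F(M)$ is the inverse limit of the finite quotient groups $\+F(M)/\hat N_k$, so it is profinite and therefore compact.'' That inference is precisely the point at issue and is not automatic: injectivity and continuity of the natural map $\+F(M)\to\projlim_k \+F(M)/\hat N_k$ are clear, but \emph{surjectivity} is not. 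A topological group with a neighbourhood basis of open normal finite-index subgroups need only be a dense subgroup of its profinite completion --- $\ZZ$ with the topology inherited from $\hat\ZZ$ carries exactly such a basis and is not compact --- so some completeness property of $\+F(M)$ must be invoked, and your proposal never supplies one. (For the same reason the ``general fact'' you open with is not correct for arbitrary totally disconnected Polish groups; it needs a basis of open subgroups, which $\+F(M)$ does have, plus the surjectivity argument.)

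The paper closes this exact point by working on the coarse-group side rather than with quotients of $\+F(M)$. It takes a descending chain of normal $^*$subgroups $N_k$ cofinal among all $^*$subgroups, endows each finite set $LC(N_k)$ with the group structure of Remark~\ref{rem:normal} to get finite groups $G_k$ with surjections $p_k\colon G_{k+1}\to G_k$, forms $G=\projlim_k(G_k,p_k)$ inside the compact product $\prod_k G_k$, and exhibits an explicit continuous monomorphism $\Phi\colon G\to\+F(M)$ sending a compatible thread $f$ to the filter generated by the $^*$cosets $f(k)$. Surjectivity of $\Phi$ is then immediate from the definition of a full filter: every $x\in\+F(M)$ contains exactly one left $^*$coset of each $N_k$, and these form a compatible thread. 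Hence $\+F(M)$ is the continuous image of the compact group $G$. You could alternatively repair your version by citing that a Polish group is Raikov complete, hence embeds as a closed (and dense, therefore full) subgroup of the compact inverse limit; either way, the surjectivity step is the substance of this direction and must be argued, not asserted.
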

\begin{proof}  $\RA$: Recall that  $\+ F(M)$ is totally disconnected. So, if    $\+ F(M)$ is compact, then  each open subgroup of $\+ F(M)$ contains a normal open subgroup. Since 
$\+ F(M)$ is Roelcke precompact, Claim~\ref{subset is correct} and Lemma~\ref{all cosets appear in M} imply the corresponding statement for~$M$.

\n  $\LA$:  By a construction similar to the one in Claim~\ref{build full filter},  let $\seq{N_k}\sN  k$ be a descending chain of normal $^*$subgroups such that $\forall U \exists k \, [ N_k \sqsubseteq U]$. Let $G_k$ be the group  induced by $M$ on $LC(N_k)$  as in Remark~\ref{rem:normal}. We define an onto map $p_k \colon G_{k+1} \to G_k$ as follows: given $A\in LC(N_{k+1})$,   using Axiom~\ref{axiom disjointness of cosets} let  $p_k(A)= B$ where $A \sqsubseteq B\in LC(N_k)$. Each $p_k$ is a homomorphism by Axioms~\ref{ax:order} and~\ref{axiom inverses 1}.

Let $G$ be the inverse limit:  $ G=\projlim_k (G_k, p_k)$. Thus \bc $G = ( \{f \in \prod_k G_k \colon \forall k \, f(k)= p_k(f(k+1))\}, \cdot)$,  \ec which is closed and hence  compact group subgroup of the Cartesian product of the $G_k$. We claim that $G \cong (\+F(M), \cdot)$ via the map $\Phi$ that sends $f \in G$ to the filter in $\+ F(M)$ generated by the $^*$cosets $f(k)$, namely \bc $\Phi(f) = \{C\in M \colon \, \exists k \, f(k) \sqsubseteq C\}$.      \ec  It is clear that $\Phi$ is a monomorphism. For continuity of $\Phi$ at~$1$, let $\+ U$ be an open subgroup of $\+ F(M)$. By Lemma~\ref{all cosets appear in M} there is a $^*$subgroup $U\in M$ such that $\hat U  =   \+ U$. Choose $k$ such that $N_k \sqsubseteq U$. We can view $U$ as a subgroup of $G_k$, and so  $\Phi^{-1}(\+ U)= \{f \in G \colon \, f(k) \in U\}$ is open in $G$. 

To show $\Phi$ is onto, given a full filter $x\in \+ F(M)$, for each $k$ there is $f(k)= B_k \in LC(N_k)$ such that $B_k \in x$. Then $f \in G$, and clearly $\Phi(f)= x$. 

This shows that $\+ F(M)$ is compact as a continuous image of the  compact space $G$. 
\end{proof}
  We will return to the topic of compact subgroups of $\S$  in Subsection~\ref{ss:SBB compact}. 
%

\section{Isomorphism of oligomorphic groups,  and countable models}

   \label{isomorphism is essentially countable} 

\n The main result of this section establishes that an oligomorphic group $G$ can   in a Borel way be interchanged with    a structure with domain $\omega$, namely its corresponding coarse group $\+ M(G)$. 

\begin{theorem} \label{thm:main} Isomorphism of oligomorphic subgroups of $S_\infty$  is classwise Borel bireducible with 
the   isomorphism  relation  on an invariant Borel set of countable  structures in a finite signature.  \end{theorem}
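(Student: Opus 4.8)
The plan is to establish the two directions of strong Borel bi-reducibility separately, exhibiting explicit Borel operators $\+ M$ and $\+ G$ that are mutually inverse up to isomorphism on the relevant classes. Following the framework laid out in Subsection~\ref{rem:plan}, let $\+ B$ be the closure under isomorphism of the range of $\+ M$ on oligomorphic groups, and I must show (i) $\+ B$ is a Borel invariant set of countable structures in the finite signature with one ternary relation $R$, (ii) there is a Borel operator $\+ G$ with $\+ G(\+ M(G)) \cong G$ for each oligomorphic $G$ and $\+ M(\+ G(M)) \cong M$ for each $M \in \+ B$, and (iii) both $\+ M$ and $\+ G$ are Borel.

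First I would treat the operator $\+ M$ going from groups to coarse structures. By the Effros-space setup in Subsection~\ref{s:prelim} and the discussion in~\cite{Kechris.Nies.etal:18}, one assigns to an oligomorphic $G$ a copy of $\+ M(G)$ with domain $\omega$ in a Borel fashion, enumerating the open cosets and computing the ternary relation $AB \sqsubseteq C$; the oligomorphy of $G$ guarantees each $U_n$ has only finitely many cosets, so $\+ M(G)$ is countable (indeed every $LC(V)$ is finite). I would then verify that $\+ M(G)$ satisfies all the axioms through Axiom~\ref{axiom Roelcke}, which was done incrementally in Section~\ref{s:coarse} via the ``this holds in $\+ M(G)$'' remarks, together with whatever additional first-order axiom pins down the oligomorphic case (the existence of the canonical subgroup $W$ from Lemma~\ref{canonical oligomorphic action}). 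The hard conceptual content is capturing oligomorphy axiomatically: I would require, as a first-order schema, that there is a $^*$subgroup $W$ for which the action $\gamma_W$ of $\+ F(M)$ on $LC(W)$ (defined in~(\ref{eqn:gamma})) has the property that every $LC(V)$-action is ``covered'' by finitely many orbits on tuples, mirroring the Effros-space characterisation of oligomorphy. Let $\+ B$ be exactly the class of countable structures satisfying this augmented axiom set; being defined by an arithmetical (indeed mostly first-order, with the $\Pi^1_1$ associativity axiom replaceable by the first-order Axiom~\ref{axiom associativity} of Subsection~\ref{ss:replace fo}) list of conditions on a structure with domain $\omega$, $\+ B$ is Borel and invariant under isomorphism.

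Next I would construct the inverse operator $\+ G$. Given $M \in \+ B$, Proposition~\ref{filter group is Polish} produces the Polish filter group $\+ F(M)$; using the axiomatic $^*$subgroup $W$ and Remark~\ref{rem:identify with N}, the action $\gamma_W \colon \+ F(M) \curvearrowright LC(W)$ realises $\+ F(M)$ as a permutation group on $\omega$, topologically embedding it into $S_\infty$, and I define $\+ G(M)$ to be the closure of its image. That $\+ G(M)$ is oligomorphic follows from the finiteness/covering content of the augmented axiom, and Borelness of $\+ G$ follows because $\gamma_W$ is defined by the explicit first-order condition~(\ref{eqn:action}) in $M$. For the round-trip $\+ G(\+ M(G)) \cong G$: by Proposition~\ref{fact:standard} we have $G \cong \+ F(\+ M(G))$ as abstract Polish groups, and the canonical-action choice of $W$ (Lemma~\ref{canonical oligomorphic action}, due to Tsankov) makes $\gamma_W$ reproduce the original oligomorphic permutation action of $G$, so the realisations coincide up to isomorphism. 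For $\+ M(\+ G(M)) \cong M$: here Lemma~\ref{all cosets appear in M} is the crucial tool, since $\+ F(M) \cong \+ G(M)$ is oligomorphic hence Roelcke precompact, so every open coset of $\+ G(M)$ is of the form $\hat A$ for a unique $A \in M$ by Claim~\ref{subset is correct} and Lemma~\ref{all cosets appear in M}; the map $A \mapsto \hat A$ is then an isomorphism of coarse structures, transporting $AB \sqsubseteq C$ to the actual coset inclusion by Claim~\ref{subset is correct}(a).

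The main obstacle I anticipate is the second round-trip identity $\+ M(\+ G(M)) \cong M$ for an \emph{abstract} $M \in \+ B$, rather than one arising as $\+ M(G)$. One must show that the abstractly-axiomatised $M$ has \emph{exactly} the cosets of $\+ F(M)$ as its elements, with no spurious or missing $^*$cosets; this is precisely the surjectivity of $A \mapsto \hat A$, which is why Axiom~\ref{axiom Roelcke} and Lemma~\ref{all cosets appear in M} were developed. Establishing that the augmented oligomorphy axiom indeed forces $\+ F(M)$ to be Roelcke precompact (so that the lemma applies) is the linchpin, and I expect getting the axiom exactly right—strong enough to imply Roelcke precompactness and oligomorphy of $\+ G(M)$, yet first-order (or at least arithmetical) and satisfied by every genuine $\+ M(G)$—to require the most care. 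Once that is in place, combining the two round-trips with the equivalence~(\ref{eqn:KNS}) of~\cite{Kechris.Nies.etal:18} gives $M \cong N \LR \+ G(M) \cong \+ G(N)$ and the dual statement for $\+ M$, completing the strong Borel bi-reducibility.
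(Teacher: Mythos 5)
Your overall architecture is the same as the paper's: take $\+ B$ to be the isomorphism closure of $\ran(\+ M)$, axiomatise it (the axioms through Axiom~\ref{axiom Roelcke} plus an axiom positing a $^*$subgroup $W$ for which $\gamma_W$ is faithful, formally oligomorphic and strongly continuous, as in Lemma~\ref{canonical oligomorphic action}), define $\+ G(M)$ as the image of $\+ F(M)$ under $\Theta_W$, and verify the two round trips, with Lemma~\ref{all cosets appear in M} (applicable because Tsankov's theorem gives Roelcke precompactness of $\+ F(M)$ once $\+ G(M)$ is oligomorphic) doing the work for $\+ M(\+ G(M)) \cong M$. However, there is a concrete error in your first paragraph: oligomorphy does \emph{not} imply that each $U_n$ has finitely many cosets, and it is false that every $LC(V)$ is finite --- that condition characterises the \emph{compact} (profinite) case (Proposition~\ref{prop:profinite}), which is the opposite extreme, and for an oligomorphic $G$ the set $LC(W)$ must be infinite precisely because $\gamma_W$ is faithful (as noted in the proof of Lemma~\ref{canonical oligomorphic action}). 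What oligomorphy gives is finitely many \emph{double} cosets of each $U_n$ (Roelcke precompactness), hence countably many open subgroups, each of countable index by separability; that is why $\+ M(G)$ is countable. Taken at face value, your finiteness claim would contradict the faithfulness of $\gamma_W$ that your own construction of the embedding into $\S$ relies on.

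Two further points are glossed over and need repair for the Borelness claims to go through. To define $\+ G$ as a Borel \emph{function} one must select, for each $M$, a single witness $W$ to the oligomorphy axiom in a Borel way; the paper does this via $\Pi^1_1$-uniformization (Addison--Kondo) and then checks that the graph of the selector is in fact Borel. Relatedly, your assertion that the augmented axiom set is ``arithmetical (indeed mostly first-order)'' is not right as stated: formal oligomorphy is genuinely an $L_{\omega_1,\omega}$ condition (for each $k$ an infinite disjunction over $n$), not a first-order schema, and faithfulness of $\gamma_W$, phrased naively as a quantification over all full filters $x \neq 1$, is $\Pi^1_1$; the paper must replace it by the first-order formula $\delta(W)$ of Axiom~\ref{axiom faithful Borel version} (and Axiom~\ref{axiom product of filters} by Axiom~\ref{axiom associativity2}) before the axiom class becomes arithmetical, or else argue as the paper does that the $\Pi^1_1$ class $\+ C$ coincides with the analytic class $\+ B$ and is therefore Borel. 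None of this changes your strategy, which matches the paper's, but without these corrections the countability of $\+ M(G)$ is justified by a false statement and the Borelness of $\+ B$ and of $\+ G$ is not established.
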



 \subsection{Review  of the  result of Kechris, Nies and Tent} \label{ss:KNT review} 


 Before proving the theorem, we    need to  review  in some more detail the map $\+ M$ defined in Kechris et al.\ \cite[Section 3.3]{Kechris.Nies.etal:18}. This map   shows that isomorphism of    Roelcke precompact groups   is Borel  {reducible} to  isomorphism on the set of   $L$-structures with domain $\omega$,  for the language $L$ with one ternary relation symbol $R$.   These  structures form a Polish space  $X_L = \+ P( \omega \times \omega \times \omega)$, the sets of triples of natural numbers. 
%
%
\cite{Kechris.Nies.etal:18}  provides a Borel map $\+ M$ from the set of  Roelcke precompact closed subgroups of $\S$ to structures in $X_L$. 
For such a group $G$, the set $\+ N_G$  of all open subgroups of $G$  is countable; 
we think of the domain of the structure $\+ M(G)$ as consisting  of the  cosets of subgroups in $\+ N_G$  (this structure is denoted by~$M_G$ in~\cite{Kechris.Nies.etal:18}).  
Then, by a result of Lusin-Novikov in the version of  \cite[18.10]{Kechris:95}, one can in a Borel way find a bijection between these cosets and $\omega$.  
  
  We note that this approach also works for Borel classes of groups  where $\+ N_G$ is merely a  countable neighbourhood basis of $1$ consisting of open subgroups such that $\+ N_G$ is isomorphism invariant; for instance, $\+ N_G$ could consist of the the compact open subgroups in a locally compact subgroup $G$ of $\S$.

\subsection{Plan of the proof}  \label{ss:plan} We will introduce a Borel inverse  up to isomorphism of  the map $\+ M$, restricted to oligomorphic groups. In more detail, let   $\+ B$ be the closure under isomorphism 
of the range of $\+ M$ on the class of oligomorphic groups. 
We will show that $\+ B$ is Borel, and  define a Borel map $\+ G$ from  $\+ B$ to the class of oligomorphic closed subgroups of $\S$  such that for each oligomorphic closed subgroup $G$ of $\S$,  and each structure $M$ in $\+ B$, we have 
\begin{equation} \label{eqn:invert}  \+ G(\+ M(G) )  \cong G \text{ and } \+ M( \+ G(M)) \cong M. \end{equation}
%

We will have $M\cong N \Leftrightarrow \+ G(M) \cong \+ G(N)$ for all $M,N\in \+ B$, as required for the proof of Theorem~\ref{thm:main}.
The implication $M\cong N\Rightarrow \+ G(M) \cong \+ G(N)$ will follow from the definition of the map $\+ G$. The reverse implication  will follow from (\ref{eqn:KNS}) and (\ref{eqn:invert}). 


The group $\+ G(M)$ is obtained from $\+ F(M)$ by specifying in a Borel way an embedding as an oligomorphic  closed subgroup of $\S$. To carry this out, 
we will add further    ``axioms"  that hold for   all the structures of the form $\+ M(G)$, where $G$ is oligomorphic. As before they  can be expressed by  monadic $\Pi^1_1$ sentences     or $\omc$ sentences in  the signature with one ternary relation symbol. A class $\+ C$ of $L$-structures will be    defined as the set of structures satisfying all these axioms. Then $\+ C$ is  $\Pi^1_1$. For an $L$-structure $M$ in $\+ C$  we will be able to recover an oligomorphic  group $\+ G(M) $ via a Borel map   in such a way that (\ref{eqn:invert}) holds. This implies that $\+ C$ equals $\+ B$, the closure of $\ran (\+ M)$ under isomorphism (which is analytic), so $\+ B$ is Borel.

\subsection{Ensuring that $\F(M)$ is isomorphic to an oligomorphic closed subgroup of $S_\infty$}

Given  a Polish group $G$ with a faithful  action $\gamma \colon G \times \omega \to \omega$, we obtain a monomorphism $\Theta_\gamma \colon G \to \S$ given by $\Theta_\gamma(g)(k)= \gamma(g,k)$. A Polish group action is continuous if and only if it is separately continuous. In the case of an action on $\omega$ (with the discrete topology), the latter condition  means that for each $k,n \in \omega$, the set $\{ g \colon \gamma(g,k)=n\}$ is open. So $\gamma$ is continuous if and only if $\Theta_\gamma$ is continuous. 

\begin{definition} 
We say that a faithful  action $\gamma \colon G \times \omega \to \omega$   is \emph{strongly continuous} if the embedding $\Theta_\gamma$ is topological. 
\end{definition} 

Equivalently, the action is continuous and for each neighbourhood $U$ of $1_G$,  the set  $\Theta_\gamma(U)$ is open in $\Theta_\gamma(G)$, namely, there is $n$ such that $\fa k < n \, \gamma(g,k) = k$ implies $g \in U$. Strong continuity implies that $G$  is topologically isomorphic to a closed subgroup of $S_\infty$. Clearly, not every continuous action is strongly continuous;  for instance let $G$ be  the discrete group of permutations of finite support and take  the natural  action of $G$ on $\omega$. 

We will   introduce axioms that ensure that $\F(M)$ has an action on $\omega$ that~is \bc   (a)~faithful,  (b)~oligomorphic, and (c)~strongly continuous. \ec
By the following lemma, each  oligomorphic group $G$ has an open subgroup $W$ so that the natural action of $G$ on the set $LC( W)=G/W$  of   left cosets of $W$ has these three properties. 
In the general setting of a coarse group structure $M$ we ensure
  the existence of a subgroup with these properties by a further axiom.

\begin{lemma} \label{canonical oligomorphic action} 
{Let $G$ be an  oligomorphic  closed subgroup  of $S_\infty$. There is an open subgroup $W$ such that the left translation action $\gamma \colon G\curvearrowright LC( W)$   is faithful and oligomorphic.  Furthermore, for any    listing  without repetition $\seq {A_i}\sN i$  of the cosets of $W$,  when viewing $\gamma $ as an action on $\omega$ via this listing, this action is strongly continuous. }
\end{lemma}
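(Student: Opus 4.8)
The plan is to let $W$ be the pointwise stabiliser of a finite set of representatives of the $1$-orbits of $G$. Since $G$ is oligomorphic its natural action on $\omega$ has finitely many orbits, so I choose $a_1, \dots, a_m \in \omega$ meeting each $1$-orbit exactly once; writing $G_a = \{g \in G : g(a)=a\}$ for the stabiliser of a point, I set $W = \bigcap_{j \le m} G_{a_j}$, a finite intersection of point stabilisers and hence an open subgroup. The basic tool is the equivariant bijection $gW \mapsto (g(a_1), \dots, g(a_m))$ from $LC(W)=G/W$ onto the $G$-orbit $O$ of $(a_1,\dots,a_m)$ in $\omega^m$; it is well defined and injective because $gW=g'W$ precisely when $g$ and $g'$ agree on every $a_j$. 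Faithfulness is then immediate: the kernel of $\gamma$ is the normal core $\bigcap_{h \in G} hWh^{-1}$, and using $hG_ah^{-1}=G_{h(a)}$ this equals $\bigcap_b G_b$, where $b$ ranges over $\bigcup_j\{h(a_j): h\in G\}$. As the $a_j$ meet every $1$-orbit, this union is all of $\omega$, so the kernel is $\bigcap_{b\in\omega}G_b=\{1\}$. I also note that $[G:W]=|O|$ is infinite: the finitely many $1$-orbits cover $\omega$, so at least one is infinite, which forces $O$ to be infinite under the first-coordinate projection; hence $LC(W)$ is countably infinite and a listing $\seq{A_i}$ of order type $\omega$ is legitimate.

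For the oligomorphic property I pass through the identification above: $(G/W)^n$ is $G$-isomorphic, via the componentwise map, to $O^n \subseteq (\omega^m)^n = \omega^{mn}$. Because $G$ is oligomorphic, its action on $\omega^{mn}$ has finitely many orbits, and $O^n$ is a $G$-invariant subset, hence a union of finitely many of them. Thus $\gamma$ has only finitely many orbits on $(G/W)^n$ for each $n$.

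For strong continuity I first note that $\gamma$ is continuous: with $A_i=h_iW$, the set $\{g : g\cdot A_i = A_j\}=\{g: h_j^{-1}gh_i\in W\}$ is a translate of the open set $W$. It remains to check the condition in the definition of strong continuity, and since the subgroups $G_{(F)}=\bigcap_{b\in F}G_b$ (for finite $F\subseteq\omega$) form a neighbourhood basis at $1_G$, it suffices to treat $U=G_{(F)}$. Here the choice of $W$ is exploited a second time: every $b\in\omega$ lies in the orbit of some $a_j$, so I may pick $h_b\in G$ and an index $j_b$ with $h_b(a_{j_b})=b$. A permutation $g$ fixing the coset $h_bW$ lies in $h_bWh_b^{-1}=\bigcap_j G_{h_b(a_j)}\subseteq G_b$. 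Choosing $n$ large enough that the finitely many cosets $h_bW$ ($b\in F$) all occur among $A_0,\dots,A_{n-1}$, I conclude that $g\cdot A_k=A_k$ for all $k<n$ forces $g\in\bigcap_{b\in F}G_b=U$, which is exactly the required condition.

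I expect the conceptual heart of the argument to be the observation that one must stabilise representatives of \emph{all} $1$-orbits: this single feature of $W$ drives both faithfulness (the orbits of the $a_j$ cover $\omega$) and strong continuity (each point $b$ is reachable from some $a_j$), while the oligomorphic property follows routinely by restricting to a $G$-invariant subset of a power of $\omega$. The only real care needed is the bookkeeping between the abstract action on $LC(W)$, its concretisation on $\omega$ through the listing $\seq{A_i}$, and the passage through the orbit $O$; none of these steps presents a genuine obstacle once the correct $W$ has been identified.
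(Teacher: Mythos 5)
Your proof is correct, and for the choice of $W$, faithfulness, and strong continuity it follows essentially the same route as the paper: $W$ is the pointwise stabiliser of a transversal of the $1$-orbits, faithfulness comes from the fact that the $G$-orbits of the chosen representatives cover $\omega$ (you phrase this via the normal core $\bigcap_h hWh^{-1}=\bigcap_{b\in\omega}G_b$, the paper argues pointwise, but it is the same idea), and continuity of $\Theta_\gamma^{-1}$ uses exactly the paper's device of picking, for each $b$ in a finite set, a coset $h_bW$ with $h_b(a_{j_b})=b$ so that fixing that coset forces fixing $b$. The one place where you genuinely diverge is the oligomorphicity of $\gamma$: the paper deduces it from Tsankov's theorem (a Roelcke precompact group with a strongly continuous action having finitely many $1$-orbits acts oligomorphically), whereas you give a direct argument via the $G$-equivariant bijection $gW\mapsto(g(a_1),\dots,g(a_m))$ of $G/W$ onto an orbit $O\subseteq\omega^m$, so that $(G/W)^n$ embeds equivariantly as the invariant set $O^n\subseteq\omega^{mn}$, which is a union of finitely many of the finitely many $G$-orbits on $\omega^{mn}$. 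Your version is more elementary and self-contained (no external citation needed), at the cost of being specific to this concrete $W$; the paper's appeal to Tsankov buys a statement that applies to any strongly continuous action with finitely many $1$-orbits, which fits its later axiomatic treatment where only the abstract properties of $\gamma_W$ are available. Your additional observation that $LC(W)$ is infinite because some $1$-orbit is infinite is also fine (the paper instead gets this from faithfulness of the action of the infinite group $G$).
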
 
\begin{proof} 
    Let $x_1, \ldots, x_k \in \omega$ represent the 1-orbits of $G$. Let $W$ be the pointwise stabiliser of $\{x_1, \ldots, x_k\}$. If $g \in G - \{1\}$ then there are $p \in G$ and $i \le k$ such that $g \cdot (p \cdot x_i) \neq p\cdot x_i$. So $p^{-1} g p \not \in W$, and hence $g \cdot  p W \neq pW$.   In particular,  the action is faithful, and hence  $LC( W)$ is infinite.

 Choose $a_i \in \S$ such  that $A_i = a_i W$. To show  that $\Theta_\gamma$  is continuous, given $n$, let $U = \bigcap_{i< n} a_i W a_i^{-1}$, and note that $U $ is an   open subgroup  of $G$. Then $g \in U$ implies $\Theta_\gamma(g)(i) = i$ for $i < n$. 
 
 To show  that $\Theta_\gamma^{-1}$  is continuous, given $n$, for each $i< n$ choose ${p(i)}\in \omega$   such that $i =  a_{p(i)} x_r$ for some $r$. If $\Theta_\gamma(g)$ fixes all the numbers ${p(i)}$ then $\gamma(g , i) = i$ for each $i<n$. 
 
 Since  $G$  is oligomorphic, it is Roelcke precompact. Then, since the  action of $G$  on $LC(W)$ is strongly  continuous and has finitely many 1-orbits,  by Tsankov \cite[Thm 2.4]{Tsankov:12} this  action is oligomorphic.  
\end{proof} 

\begin{remark} \label{rem: abc} Given a $^*$subgroup $V$, we discuss how to express that $\gamma_V$ has properties (a), (b) and  (c) above      via either $\Pi^1_1$ formulas  or $\omc$ formulas, in the signature~$L$.

\medskip

\n (a) We can say   that $\gamma_V$ is faithful by expressing the following  by a   $\Pi^1_1$ formula: for all $x\neq 1$, there are   disjoint left $^*$cosets $A$, $B$ of $V$ such that  $x\cdot A=B$.  Note that this makes $LC(V)$ infinite.

\medskip

\n (b) To say that  $\gamma_V$ is  oligomorphic using a formula in $\omc$, we can require  that    for all $k\geq1$, there is some $n\geq1$ and there are $k$-tuples $\vec{C}^0,\dots,\vec{C}^{n-1}$ of left $^*$cosets of $V$ with the following property. For each $k$-tuple $\vec{B}$ of left $^*$cosets of $V$, there is some $i<n$ and some $S$ such that for all $j<k$, we have $S B_j\sqsubseteq C_j^i$. To show that this condition      implies that $\gamma_V$ is oligomorphic,  choose any $x$ such that  $S\in x$. Then $x \cdot B_j = C^i_j$ for each~$j$.

  If $M$ satisfies the  given   condition, we say for short  that $M$ is \emph{formally oligomorphic}.

\medskip
\n (c) We can express that $\gamma_V$ is strongly continuous by an $\omc$ formula.   Write $\Theta_V$ for $\Theta_{\gamma_V}$. First note that $\Theta_V$ is automatically continuous at $1$ (and hence continuous): a basic neighbourhood of $1$ in $\S$ has the form $\{ \rho \colon \, \fa i< n \,[  \rho(i) = i ]\}$.  
For a full filter $x$, we have $xA_i = A_i$ if and only if $x  \in \hat S$ for some $S$ such that $SA_i\sqsubseteq A_i$. So by the definition of the topology~\ref{def:topo}, $\bigcap_{i< n} \{ x \colon \, xA_i = A_i\}$ is an open subset in $\F(M)$ that is mapped by $\Theta$ into that neighbourhood. \end{remark}

That $\Theta_V^{-1}$ is continuous at $1$ means  the following:
 \begin{equation} \label{eqn: fifi} \forall U  \ex k \ex  B_1, \ldots , B_k  \in LC(V)  \fa x \,  [ \bigwedge_i x \cdot B_i = B_i \to U \in x].\end{equation}
To avoid the universal second-order quantifier $\forall x$, we will instead use
\begin{equation} \label{eqn: fifi2}\forall U  \ex k \ex  B_1, \ldots , B_k  \in LC(V)  \fa S \,  [ \bigwedge_i [S   B_i \sqsubseteq  B_i] \to S \sqsubseteq  U ]. \end{equation}  
 
\begin{claim} Let $k \in \NN$. Given $V , U, B_1, \ldots, B_k  \in M$,  we have 
\bc $ \fa x \,  [ \bigwedge_i x \cdot B_i = B_i \to U \in x] \LR \fa S \,  [ \bigwedge_i [S   B_i \sqsubseteq  B_i] \to S \sqsubseteq  U]$. \ec
Thus for each $V$,  (\ref{eqn: fifi})   is equivalent to  (\ref{eqn: fifi2}). \end{claim}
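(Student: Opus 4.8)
The plan is to prove the two implications of the equivalence separately, translating between the statement about full filters $x$ and the purely first-order statement about $^*$cosets $S$ by means of three ingredients: the definition of the action in (\ref{eqn:action}), the filter properties in Definition~\ref{df:filter} (downward directedness and upward closure under $\sqsubseteq$), and the correspondence $A \sqsubseteq B \LR \hat A \subseteq \hat B$ from Claim~\ref{subset is correct}(c). Throughout $M$ is countable, so Claim~\ref{build full filter} supplies a full filter containing any prescribed $^*$coset, which is what powers the harder direction.

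For the implication from right to left, I would assume $\fa S \, [\bigwedge_i S B_i \sqsubseteq B_i \to S \sqsubseteq U]$ and take a full filter $x$ with $x \cdot B_i = B_i$ for all $i$. By (\ref{eqn:action}) each equation $x \cdot B_i = B_i$ yields a witness $S_i \in x$ with $S_i B_i \sqsubseteq B_i$. Since $x$ is directed downward, I can choose a single $S \in x$ with $S \sqsubseteq S_i$ for every $i \le k$. Axiom~\ref{ax:order}, applied with $S \sqsubseteq S_i$ and $B_i \sqsubseteq B_i$ (reflexivity of the partial order $\sqsubseteq$), then gives $S B_i \sqsubseteq B_i$ for each $i$. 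The hypothesis now yields $S \sqsubseteq U$, and since $S \in x$ and $x$ is closed upward under $\sqsubseteq$, we conclude $U \in x$.

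For the implication from left to right, I would assume $\fa x \, [\bigwedge_i x \cdot B_i = B_i \to U \in x]$ and let $S$ satisfy $S B_i \sqsubseteq B_i$ for all $i$. By Claim~\ref{subset is correct}(c) it suffices to prove $\hat S \subseteq \hat U$, that is, that every full filter containing $S$ also contains $U$. So I fix an arbitrary full filter $x$ with $S \in x$, which exists by Claim~\ref{build full filter}. For each $i$, the single witness $S \in x$ together with $S B_i \sqsubseteq B_i$ gives $x \cdot B_i = B_i$ directly from (\ref{eqn:action}). The hypothesis then delivers $U \in x$, which is exactly what is needed.

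I expect no serious obstacle: both directions are short once the correct translation is in place. The one point that needs care is the right-to-left direction, where one must produce a single $^*$coset $S \in x$ simultaneously witnessing all $k$ equations $x \cdot B_i = B_i$; this requires combining downward directedness of the filter with the monotonicity Axiom~\ref{ax:order}. In the left-to-right direction the only subtlety is that reducing $S \sqsubseteq U$ to the inclusion $\hat S \subseteq \hat U$ depends on having enough full filters, which is precisely where countability of $M$ (via Claim~\ref{build full filter}) is used.
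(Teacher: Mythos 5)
Your proposal is correct and follows essentially the same route as the paper: both directions are handled by translating between $x\cdot B_i=B_i$ and the existence of a witness $S\in x$ with $SB_i\sqsubseteq B_i$, using downward directedness and upward closure of full filters together with Claim~\ref{subset is correct}(c) to pass between $S\sqsubseteq U$ and $\hat S\subseteq\hat U$. The only difference is that you spell out the step the paper compresses into ``by downward directedness there is $S\in x$ with $\bigwedge_i SB_i\sqsubseteq B_i$'', namely combining the individual witnesses $S_i$ via a common refinement and Axiom~\ref{ax:order}.
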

\begin{proof} We make use of Claim~\ref{subset is correct}(c). For the implication~``$\RA$" suppose that $ \bigwedge_i [S   B_i \sqsubseteq  B_i] $. Let $x$ be a full filter such that $S\in x$. Then $x\cdot B_i=B_i$ for each $i\le k$, and hence $U\in x$. So $ \hat S \sub \hat U$ and hence $S \sqsubseteq U$.

\n For the implication~``$\LA$" suppose that $ \bigwedge_i [x \cdot   B_i =  B_i] $. The by downward directness of full filters, there is $S \in x$ such that $\bigwedge_i [S   B_i \sqsubseteq  B_i]$. So $S \sqsubseteq U$, whence $U\in x$ by upward closure of full filters.  \end{proof}

\begin{axiom} \label{axiom faithful oligomorphic action} 
There is a $^*$subgroup $W$ in $M$  such that $\gamma_W$ is faithful,   formally oligomorphic, and strongly  continuous. 
\end{axiom} 
Later on in Section~\ref{s: turning}, we will argue that we can determine such a $W$ via a Borel function applied to $M$. Then we will define the required oligomorphic group $\+ G(M)\cong \+ F(M) $ 
as the range of $\Theta_W$. The first statement in  (\ref{eqn:invert}) will then follow from Prop.~\ref{fact:standard}.  In Section~\ref{ss:EC} we will reformulate the axiom in order to avoid the universal second order quantifier we are using to express faithfulness.

 Lemma~\ref{canonical oligomorphic action} together with the following claim ensures that $\+ M(G)$ satisfies Axiom~\ref{axiom faithful oligomorphic action}.
\begin{claim} \label{characterization of oligomorphic} 
If $M=\+ M(G)$ and $V$ is a $^*$subgroup in $M$ such that $\gamma_V$ is oligomorphic, then $\gamma_V$ is formally oligomorphic. 
\end{claim}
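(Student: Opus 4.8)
The plan is to read off the data demanded by the formal oligomorphicity condition of Remark~\ref{rem: abc}(b) directly from the orbits of the genuine action. Since $M=\+ M(G)$, the action $\gamma_V$ is the action of $\+ F(M)$ on $LC(V)$, which by Proposition~\ref{fact:standard} is the natural left action of $G$ on the left cosets of $V$. Hence the hypothesis that $\gamma_V$ is oligomorphic means precisely that for each $k\geq 1$ the group $\+ F(M)$ has only finitely many orbits on the set $(LC(V))^k$ of $k$-tuples of left $^*$cosets of $V$.

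First I would fix $k\geq 1$ and let $\vec{C}^0,\dots,\vec{C}^{n-1}$ be a complete list of representatives for the finitely many orbits of $\+ F(M)$ on $(LC(V))^k$. Given an arbitrary $k$-tuple $\vec{B}=(B_0,\dots,B_{k-1})$ of left $^*$cosets of $V$, it lies in one of these orbits, so there are a full filter $x\in\+ F(M)$ and an index $i<n$ with $x\cdot\vec{B}=\vec{C}^i$, that is, $x\cdot B_j=C_j^i$ for every $j<k$. The tuples $\vec{C}^0,\dots,\vec{C}^{n-1}$ will be the witnesses the formal condition asks for; what remains is, for each given $\vec B$, to produce the single $^*$coset $S$ that pushes every $B_j$ into $C_j^i$ simultaneously.

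This simultaneity step is the only point of substance. By the definition \eqref{eqn:action} of the action, for each $j<k$ there is some $S_j\in x$ with $S_j B_j\sqsubseteq C_j^i$, but a priori these witnesses vary with $j$. Since $x$ is a full filter it is directed downward, so (only finitely many coordinates being involved) I can choose a single $S\in x$ with $S\sqsubseteq S_j$ for all $j<k$. Then for each $j$ I apply the monotonicity Axiom~\ref{ax:order} to $S_j B_j\sqsubseteq C_j^i$, using $S\sqsubseteq S_j$ together with $B_j\sqsubseteq B_j$ (reflexivity of $\sqsubseteq$, Axiom~\ref{basic axioms}(c)), to conclude $S B_j\sqsubseteq C_j^i$ for all $j<k$. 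This $S$ and the index $i$ are exactly what Remark~\ref{rem: abc}(b) requires, so $M$ is formally oligomorphic.

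I expect the hard part to be nothing more than this passage from an honest group element to a single coset-level witness: genuine oligomorphicity hands us, for each tuple, an element $x$ with $x\cdot\vec{B}=\vec{C}^i$, whereas the $\omc$ condition insists on one $S$ valid across all coordinates at once. Downward directedness of filters combined with monotonicity bridges the gap cleanly, and it is precisely here that the translation from the genuine action to the syntactic condition happens.
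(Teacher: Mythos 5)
Your proof is correct, and it reaches the single witness $S$ by a different and more elementary route than the paper. Both arguments begin identically: take the finitely many orbit representatives $\vec{C}^0,\dots,\vec{C}^{n-1}$ of the oligomorphic action on $k$-tuples, and for a given $\vec B$ fix $x\in\F(M)$ and $i<n$ with $x\cdot B_j=C^i_j$ for all $j<k$. The paper then passes to the Polish group $\F(M)$: it observes that $x\hat B_j=\hat C^i_j$, invokes continuity of the group operation from Proposition~\ref{filter group is Polish} to find a basic neighbourhood $\hat S\ni x$ with $\hat S\hat B_j\subseteq\hat C^i_j$ for all $j$, and translates back to $SB_j\sqsubseteq C^i_j$ via Claim~\ref{subset is correct}. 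You instead stay entirely at the level of the coarse structure: the definition~(\ref{eqn:action}) of $\gamma_V$ hands you witnesses $S_j\in x$ with $S_jB_j\sqsubseteq C^i_j$, downward directedness of the full filter gives a common refinement $S\sqsubseteq S_j$, and Axiom~\ref{ax:order} (with reflexivity of $\sqsubseteq$ from Axiom~\ref{basic axioms}(c)) yields $SB_j\sqsubseteq C^i_j$ for all $j$ simultaneously. What your version buys is independence from the topological machinery: it needs neither the continuity statement of Proposition~\ref{filter group is Polish} nor Claim~\ref{subset is correct}, only the filter axioms and monotonicity, and it makes explicit that the whole content of the claim is the passage from per-coordinate witnesses to a uniform one. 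What the paper's version buys is uniformity of method with the surrounding claims (e.g.\ Claims~\ref{express nonempty intersection} and following), which all argue through $\hat A$-sets and continuity.
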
 
\begin{proof} 
Since $\gamma_V$ is oligomorphic, we have some $x\in\F(M)$ such that for all $j<k$, $x\cdot B_j= C_j^i$ in the notation above. It is easy to see that the action $\F(M)\curvearrowright \F(M)/V$ induced by the group operation on $\F(M)$ satisfies $x\cdot \hat{B}_j= \hat{C}_j^i$. 
By continuity of the group operation in Prop.\  \ref{filter group is Polish}, there is some $S$ such that $\hat{S}$ contains $x$ and for all $j<k$, we have 
$\hat{S} \hat{B}_j \subseteq \hat{C}^i_j$ and hence $S B_j \sqsubseteq C^i_j$ by Claim \ref{subset is correct}. 
\end{proof}


\subsection{Turning the filter group into a closed subgroup of $\S$} \label{s: turning}
We  now define the Borel map $\+ G$.  Let $\+ C$ be the set of $L$-structures $M$ with domain $\omega$ that satisfy the axioms stated above. Note that $\+ C$ is $\Pi^1_1$ because all axioms can be expressed in $\Pi^1_1$ form or in $L_{\omega_1, \omega}$ form. Also, $\+ C$ contains the closure under isomorphism of the range of the map~$\+ M$, denoted $\+ B$ in Section~\ref{ss:plan} above.


As mentioned above, the   relation
 $\{ \la M, W \ra \colon M \in \+ C \lland    W \in M$   is a $^*$subgroup in $M$ satisfying the properties  in Axiom~\ref{axiom faithful oligomorphic action}$\}$  is  $\Pi^1_1$.  By   $\Pi^1_1$-uniformization (Addison/Kondo, see e.g.\ \cite[Theorem 4E.4]{Moschovakis:80})  there is a   function $f\colon \+ C \rightarrow \omega$ with  $\Pi^1_1$ graph that sends each $M \in \+ C$ to some $W\in M$  of this kind.  
 Recall that the embedding $\Theta_V$,  for certain $^*$subgroups $V$ in~$M$,  was  defined in (c) before Axiom~\ref{axiom faithful oligomorphic action}.  We define $\+ G(M)$ as the range of~$\Theta_W$ where $W = f(M)$. In other words, $\+ G(M)$ is the closed subgroup of $S_\infty$ determined by the action of $\+ F(M)$ on $LC(W)$. Here we use the canonical increasing   bijection between $\omega$ and $LC(W)$ (an infinite subset of $\omega$)    to view the action on $LC(W)$ as  an action on~$\omega$, as specified in Remark~\ref{rem:identify with N}.

We are now ready to  establish (\ref{eqn:invert}), restated here for convenience: 

 \begin{prop}   \label{cl:verify 2} For each oligomorphic group $G$ and each structure \\ $M \in \+ C$, we have 
\begin{equation*}    \+ G(\+ M(G) )  \cong G \text{ and } \+ M( \+ G(M)) \cong M. \end{equation*} \end{prop}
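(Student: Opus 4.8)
The plan is to prove the two isomorphisms in \eqref{eqn:invert} separately, since each unpacks the definitions of $\+ M$ and $\+ G$ in a different direction.

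For the first statement, $\+ G(\+ M(G)) \cong G$ for oligomorphic $G$, I would start from Proposition~\ref{fact:standard}, which already gives a natural topological isomorphism $G \cong \+ F(\+ M(G))$, since an oligomorphic group is Roelcke precompact and hence has only countably many open subgroups, so $\+ M(G)$ is countable. It remains to identify $\+ F(\+ M(G))$ with $\+ G(\+ M(G))$. By definition $\+ G(M)$ is the range of $\Theta_W$ where $W = f(M)$ is the $^*$subgroup produced by the $\Pi^1_1$-uniformization, and $\Theta_W$ realizes $\+ F(M)$ as a permutation group via its action $\gamma_W$ on $LC(W)$. The point is that $\Theta_W$ is a topological \emph{embedding} precisely because $\gamma_W$ is strongly continuous (Axiom~\ref{axiom faithful oligomorphic action}, which holds in $\+ M(G)$ by Lemma~\ref{canonical oligomorphic action} together with Claim~\ref{characterization of oligomorphic}). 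Thus $\+ F(\+ M(G)) \cong \range(\Theta_W) = \+ G(\+ M(G))$ as topological groups, and composing with $\Phi$ from Proposition~\ref{fact:standard} gives $G \cong \+ G(\+ M(G))$.

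For the second statement, $\+ M(\+ G(M)) \cong M$ for $M \in \+ C$, I would argue as follows. Write $H = \+ G(M) = \range(\Theta_W)$, a closed oligomorphic subgroup of $\S$ isomorphic as a topological group to $\+ F(M)$ via $\Theta_W$. Now $\+ M(H)$ is the coarse group whose domain is the set of open cosets of $H$, so the task is to produce an isomorphism of $L$-structures between $M$ and $\+ M(H)$. The natural candidate map sends each $A \in M$ to the open coset $\Theta_W(\hat A) \subseteq H$, using Claim~\ref{subset is correct}(e) to see that $\hat A$ is an open coset of $\+ F(M)$ and hence $\Theta_W(\hat A)$ is an open coset of $H$. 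To verify this map is a bijection onto the domain of $\+ M(H)$, I would invoke Lemma~\ref{all cosets appear in M}: since $\+ F(M)$ is Roelcke precompact (being isomorphic to the oligomorphic, hence Roelcke precompact, group $H$), \emph{every} open coset of $\+ F(M)$ is of the form $\hat A$ for a unique $A \in M$, and injectivity follows from Claim~\ref{subset is correct}(c) plus the remark that the map $A \mapsto \hat A$ is $1$-$1$. Finally I would check that this bijection is an isomorphism of $L$-structures, i.e.\ that it preserves the ternary relation $R$: this is exactly the content of Claim~\ref{subset is correct}(a), which states $AB \sqsubseteq C \Leftrightarrow \hat A \hat B \subseteq \hat C$, transported across the topological isomorphism $\Theta_W$ to the relation $\sub$ on open cosets of $H$ that defines $\+ M(H)$.

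The main obstacle I anticipate is the surjectivity in the second statement, where one must know that no open cosets of $H$ are "missed" by the map $A \mapsto \hat A$. This is precisely where Roelcke precompactness and Lemma~\ref{all cosets appear in M} are essential: without them one could not rule out open subgroups of $\+ F(M)$ that fail to be named by any $^*$subgroup of $M$, and the induced map on coarse groups would only be an embedding rather than an isomorphism. A secondary technical point is keeping careful track of the identification of $LC(W)$ with $\omega$ via the canonical bijection of Remark~\ref{rem:identify with N}, so that the permutation representation $\Theta_W$ and the abstract filter group $\+ F(M)$ are matched up correctly; but once Lemma~\ref{all cosets appear in M} is in hand, the remaining verifications are routine applications of Claim~\ref{subset is correct}.
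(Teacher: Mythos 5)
Your proposal is correct and follows essentially the same route as the paper: the first isomorphism is reduced to Proposition~\ref{fact:standard} via the topological embedding $\Theta_W$, and the second is obtained by deducing Roelcke precompactness of $\F(M)$ from its oligomorphic realization, then applying Lemma~\ref{all cosets appear in M} for surjectivity of $A \mapsto \hat A$ and Claim~\ref{subset is correct} for preservation of the ternary relation. The only difference is that you spell out some identifications (e.g.\ $\+ F(\+ M(G)) \cong \range(\Theta_W)$) that the paper leaves implicit.
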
 
\begin{proof} 
As already mentioned, the first statement   follows from Prop.~\ref{fact:standard}.
Given $A \in M$, we view $\hat A$ now as an open coset of $\+ G(M)$,  rather than of the filter group $\+ F(M)$. 

By Axiom \ref{axiom faithful oligomorphic action}, there is a  $^*$subgroup $W$ in $M$ such that  $\gamma_W$ is faithful,  oligomorphic and yields a topological  embedding into $\S$. 
Since $\+ F(M)$ is a Polish group by Claim~\ref{filter group is Polish}, the  range of $\gamma_V$ is  an oligomorphic closed subgroup of  $\S$. 
  Hence $\F(M)$ is Roelcke precompact   by \cite[Theorem 2.4]{Tsankov:12}.
  
  Then, by Lemma \ref{all cosets appear in M},   the map $ A \mapsto \hat A$ is a bijection between $M$ and  $\+ M(\+ G(M))$. 
By Claim \ref{subset is correct} it is an isomorphism. 
Thus we  obtain   the second statement. 
\end{proof} 
Note that we actually show for each $A\in M$ that  $(\+ M( \+ G(M)), \hat A) \cong (M, A)$. This will be used below.

Proposition~\ref{cl:verify 2} implies that   $\+ B = \+ C$. Since $\+ B$ is the  closure    under isomorphism of the range of a Borel measurable map defined on  a Borel domain, it is analytic. Since  $\+ B$ is also coanalytic, it is   Borel. 
Since the domain of $f$ is Borel, the  graph of $f$ is analytic because  $f(x)\neq n$ iff $\exists m\neq n\ f(x) =  m$. So the graph of $f$ is Borel.

%
%

 Note that $\G(M)$ is an element of the Effros Borel space of $S_\infty$ (see Section~\ref{s:prelim}).    In the following, $\sss$ will denote an injective map on  initial segments of the integers, that is, 
 a tuple of integers without repetitions. 
 Let $[\sss]$  denote the  set of permutations extending~$\sss$:  \bc $\+ [\sss] = \{ f \in S_\infty \colon \sigma \prec f\}$  \ec (this is often denoted  $\+ N_\sss$ in the literature).  The sets $[\sss]$ form a basis for the topology of pointwise convergence of $S_\infty$. 

\begin{claim} The map $M \mapsto \+ \G(M)$,     for $M \in \+ C$,  is Borel. \end{claim}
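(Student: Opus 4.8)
The plan is to exploit the standard description of the Effros Borel space on $\+ E(\S)$, whose $\sigma$-algebra is generated by the sets $\{D \in \+ E(\S)\colon D\cap U \neq \ES\}$ for $U$ open in $\S$. Since the basic clopen sets $[\sss]$ form a countable basis of $\S$ and $\{D\colon D\cap U\neq\ES\}=\bigcup_{[\sss]\sub U}\{D\colon D\cap[\sss]\neq\ES\}$, the map $M\mapsto \+ G(M)$ is Borel as soon as, for each fixed $\sss$, the set $\{M \in \+ C\colon \+ G(M)\cap [\sss]\neq \ES\}$ is Borel. So I would reduce membership in this last set to a first-order (hence Borel) condition on~$M$.

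Recall that $\+ G(M)=\ran(\Theta_W)$ where $W=f(M)$ and $\Theta_W(x)(k)=\gamma_W(x,k)$, the action being taken on $LC(W)$ identified with $\omega$ via the canonical increasing enumeration; write $A_k$ for the $k$-th left $^*$coset of $W$ in this enumeration. A permutation in $\+ G(M)$ extends $\sss$ precisely when it is $\Theta_W(x)$ for a full filter $x$ with $\sss\prec\Theta_W(x)$, i.e.\ $x\cdot A_k = A_{\sss(k)}$ for all $k<\len{\sss}$. The key step is the equivalence
\[ \+ G(M)\cap [\sss]\neq \ES \quad\LR\quad \exists S\in M\ \fa k<\len{\sss}\ (S A_k \sqsubseteq A_{\sss(k)}). \]
For ``$\LA$'' I would extend such an $S$ to a full filter $x\ni S$ by Claim~\ref{build full filter}; then $x\cdot A_k=A_{\sss(k)}$ for every $k$ by~\eqref{eqn:action}, so $\Theta_W(x)\in \+ G(M)\cap[\sss]$. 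For ``$\RA$'', if $x$ realises $\sss$ then by~\eqref{eqn:action} each $k$ furnishes some $S_k\in x$ with $S_k A_k\sqsubseteq A_{\sss(k)}$; downward directedness of $x$ yields a single $S\in x$ with $S\sqsubseteq S_k$ for all $k<\len{\sss}$, and Axiom~\ref{ax:order} then gives $S A_k\sqsubseteq A_{\sss(k)}$ for each $k$.

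It then remains to check that the right-hand side is Borel in $M$. The map $M\mapsto W=f(M)$ is Borel since $f$ has Borel graph; the predicate ``$A$ is a left $^*$coset of $W$'' is first-order in $(M,W)$, so (using that $LC(W)$ is infinite, as $\gamma_W$ is faithful) the canonical increasing enumeration $k\mapsto A_k$ of $LC(W)$ is a Borel function of $(M,W,k)$. The relation $S A_k\sqsubseteq A_{\sss(k)}$ is just the atomic ternary relation $R$ evaluated at $(S,A_k,A_{\sss(k)})$, hence Borel in $(M,S)$. Therefore the right-hand side, a countable union over $S\in\omega$ of a finite conjunction over $k<\len{\sss}$ of Borel conditions, is Borel, which finishes the proof.

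The main obstacle is the passage from the \emph{a priori} analytic condition ``there exists a full filter $x$ acting as $\sss$'' to the first-order condition ``there exists a single $^*$coset $S\in M$''. Everything hinges on amalgamating the finitely many local witnesses $S_k$ into one $S$ lying in a common filter; this is exactly what downward directedness, the monotonicity Axiom~\ref{ax:order}, and the filter-extension Claim~\ref{build full filter} provide, and it is here that the countability of $M$ is essential.
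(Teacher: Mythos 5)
Your proposal is correct and follows essentially the same route as the paper: reduce to the basic open sets $[\sss]$, and show that $\G(M)\cap[\sss]\neq\ES$ is equivalent to the first-order condition $\exists S\in M\,\fa k<\len{\sss}\,(SA_k\sqsubseteq A_{\sss(k)})$, with $W=f(M)$ and the $A_k$ obtained in a Borel way. The paper states this equivalence with only the phrase ``by the definition of the action $\gamma_W$''; your verification of both directions (extending $S$ to a full filter via Claim~\ref{build full filter}, and amalgamating the witnesses $S_k$ via downward directedness and Axiom~\ref{ax:order}) correctly supplies the omitted details.
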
 
\begin{proof} 
Let  $[\sigma]$ be  an arbitrary  basic open subset of $S_\infty$. It is sufficient to show that $\{M\mid \G(M) \cap [\sigma]\neq\emptyset\}$ is Borel.  From $M$ we obtain $W= f(M)$ in a Borel way, and then the  list $A_0, A_1, \ldots $ of  left $^*$cosets of $W$  in ascending order.
We have \bc $\G(M)\cap [\sigma]\neq\emptyset \Longleftrightarrow \exists S\in M\ \fa i, j [ \sigma(i)=j  \to S A_i\sqsubseteq A_j]$,  \ec by the definition of the action $\gamma_W$  in~(\ref{eqn:gamma}).
\end{proof}

This completes the proof of Theorem~\ref{thm:main}.

 \subsection{Borel duality with coarse groups in the profinite case} \label{ss:SBB compact}  

Let $\+ C_\text{pro}$ be the class of coarse groups satisfying Axioms  through to \ref{axiom Roelcke} (but not necessarily~\ref{axiom faithful oligomorphic action}), as well as the   condition   in Proposition~\ref{prop:profinite} which is equivalent to compactness of $\F(M)$. Here we show that $\+ C_\text{pro}$ is Borel, and that the compact subgroups of $\S$ are classwise Borel bireducible to 
$\+ C_\text{pro}$, in analogy  with  the Borel version of Stone duality discussed above.

 Given $M \in \+ C_\text{pro}$,  let $\Theta \colon \+ F(M) \to \S$ be the map    induced by the   action of $\+ F(M)$ on $M$ given by Claim~\ref{axiom gammaV is a group action} and the discussion thereafter. This action  clearly is  faithful, so $\Theta$ is an embedding. As in Remark~\ref{rem: abc}(c), $\Theta$ is automatically continuous.  Then, since~$\+ F(M)$ is compact,   $\Theta$ is a topological embedding. Let $\+ G_\text{pro}(M)$ be its range, a closed subgroup of $\S$.  

It is easy to establish the analog of Proposition~\ref{cl:verify 2} using Lemma~\ref{all cosets appear in M}. This implies that  $\+ C_\text{pro}$ is Borel (alternatively this holds by the first-order reformulation of   Axiom~\ref{axiom product of filters} given in Subsection~\ref{son_quatsch}). We obtain:

\begin{prop} The Borel operators $\+ M$ and $\G_\text{pro}$ establish classwise Borel bireducibility 
between the compact subgroups of $\S$, and the coarse groups satisfying the Axioms  \ref{axiom basics} to \ref{axiom Roelcke} and the  condition   in Proposition~\ref{prop:profinite}. \end{prop}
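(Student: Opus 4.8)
The plan is to establish the two directions of strong Borel bi-reducibility by reusing the machinery already built up for the oligomorphic case, with the compactness hypothesis replacing the oligomorphy axiom. First I would recall what must be verified: the operators $\+ M$ and $\G_\text{pro}$ are Borel; they are mutually inverse up to isomorphism on the relevant classes, i.e.\ $\G_\text{pro}(\+ M(G)) \cong G$ for each compact $G \le \S$ and $\+ M(\G_\text{pro}(M)) \cong M$ for each $M \in \+ C_\text{pro}$; and $\+ C_\text{pro}$ is Borel.

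For the first inverse identity, $\G_\text{pro}(\+ M(G)) \cong G$, I would note that $\+ M(G)$ is countable since a compact (hence Roelcke precompact) group has only countably many open subgroups by the Fact in the introduction, and then invoke Proposition~\ref{fact:standard}, which already gives $G \cong \+ F(\+ M(G))$ via a topological isomorphism; composing with the topological embedding $\Theta$ realizing $\+ F(\+ M(G))$ as $\G_\text{pro}(\+ M(G))$ yields the claim. For the second identity, $\+ M(\G_\text{pro}(M)) \cong M$, I would mimic Proposition~\ref{cl:verify 2}: since $M \in \+ C_\text{pro}$, the condition in Proposition~\ref{prop:profinite} guarantees $\+ F(M)$ is compact, hence Roelcke precompact, so Lemma~\ref{all cosets appear in M} applies and the map $A \mapsto \hat A$ is a bijection between $M$ and the open cosets of $\G_\text{pro}(M)$, i.e.\ between $M$ and $\+ M(\G_\text{pro}(M))$; Claim~\ref{subset is correct} then upgrades this bijection to an isomorphism of coarse groups. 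This is the direct analog the paper itself flags as ``easy to establish.''

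For Borelness of the operators, $\+ M$ is Borel by the review in Subsection~\ref{ss:KNT review}, and $\G_\text{pro}$ is Borel by the same argument as in the final Claim of Section~\ref{s: turning}: one checks that $\{M : \G_\text{pro}(M) \cap [\sigma] \neq \emptyset\}$ is Borel using the explicit description of the action of $\+ F(M)$ on $M$ from Claim~\ref{axiom gammaV is a group action}. Borelness of $\+ C_\text{pro}$ then follows exactly as in the oligomorphic argument: the class is $\Pi^1_1$ (all axioms through~\ref{axiom Roelcke} are $\Pi^1_1$ or $\omc$, and the extra condition from Proposition~\ref{prop:profinite} is arithmetical), while simultaneously it is the closure under isomorphism of $\ran(\+ M)$ restricted to compact groups, hence analytic; being both analytic and coanalytic, it is Borel. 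Alternatively, as the excerpt remarks, one may use the first-order reformulation of Axiom~\ref{axiom product of filters} to see directly that $\+ C_\text{pro}$ is arithmetical.

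The main obstacle, and the step requiring the most care, is confirming that the classes match up correctly: namely that the closure under isomorphism of $\ran(\+ M)$ on compact groups coincides with $\+ C_\text{pro}$ as defined abstractly by the axioms. One inclusion is the statement that $\+ M(G)$ satisfies all the axioms plus the Proposition~\ref{prop:profinite} condition for compact $G$ (which follows since compactness of $\+ F(M) \cong G$ forces that condition), and the reverse inclusion is precisely what the inverse identity $\+ M(\G_\text{pro}(M)) \cong M$ delivers, showing every $M \in \+ C_\text{pro}$ is isomorphic to the coarse group of the compact group $\G_\text{pro}(M)$. Once this equality of classes is pinned down, the Borel reduction $M \cong N \LR \G_\text{pro}(M) \cong \G_\text{pro}(N)$ follows from~(\ref{eqn:KNS}) together with~(\ref{eqn:invert}) exactly as in the oligomorphic proof plan of Subsection~\ref{ss:plan}.
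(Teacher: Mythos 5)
Your proposal is correct and follows essentially the same route as the paper: the paper likewise defines $\G_\text{pro}(M)$ via the faithful, automatically continuous action of $\+ F(M)$ on $M$ (using compactness to upgrade $\Theta$ to a topological embedding), establishes the analogue of Proposition~\ref{cl:verify 2} through Lemma~\ref{all cosets appear in M} and Proposition~\ref{fact:standard}, and deduces Borelness of $\+ C_\text{pro}$ from its being simultaneously $\Pi^1_1$ and the analytic isomorphism-closure of the range of $\+ M$ (or, alternatively, from the first-order reformulation of Axiom~\ref{axiom product of filters}). Your write-up simply fills in the details that the paper compresses into ``it is easy to establish the analog of Proposition~\ref{cl:verify 2} using Lemma~\ref{all cosets appear in M}.''
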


\section{Complexity of the isomorphism relation between oligomorphic groups}


\subsection{Conjugacy} We begin with an easy result:
  conjugacy of oligomorphic  groups is smooth, that is, Borel reducible to the identity on $\mathbb R$. 
  
   For a closed subgroup $G$ of $ \S$, let $\+ E_G$ denote the orbit equivalence structure with domain $\omega$. For each $n$,  the signature of this  structure has a $2n$-ary relation symbol, denoting the orbit equivalence relation for the action of $G$ on ${}^n\omega$.  

 The following fact   holds in  general.

 \begin{fact} \label{fact:conj} Let $G$ and $H$ be   closed subgroups of $\S$.  Let $\alpha \in \S$. Then
 
 \bc $G, H$ are conjugate via  $\alpha$ $\LR$ $\+ E_G \cong \+ E_H$ via $\alpha$. \ec
 \end{fact}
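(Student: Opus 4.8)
The plan is to unfold both sides of the biconditional into statements about finite tuples and prove the two implications separately. Fix the convention that ``$G,H$ conjugate via $\alpha$'' means $H=\alpha G\alpha^{-1}$, and for $\ol a=(a_0,\dots,a_{n-1})\in\omega^n$ and $\beta\in\S$ write $\beta\ol a=(\beta(a_0),\dots,\beta(a_{n-1}))$. The $2n$-ary symbol of $\+ E_G$ is interpreted by $\ol a\sim_G\ol b :\LR \ex g\in G\ g\ol a=\ol b$, so ``$\+ E_G\cong\+ E_H$ via $\alpha$'' unfolds to the condition that for every $n$ and all $\ol a,\ol b\in\omega^n$ one has $\ol a\sim_G\ol b\LR\alpha\ol a\sim_H\alpha\ol b$. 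Note that this condition for $\alpha$ is equivalent to the same condition for $\alpha^{-1}$ with the roles of $G$ and $H$ exchanged, so it suffices to treat each inclusion once and invoke symmetry.

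For the direction $\RA$, assuming $H=\alpha G\alpha^{-1}$, I would verify directly that $\alpha$ preserves and reflects each relation: if $g\in G$ witnesses $\ol a\sim_G\ol b$, then $\alpha g\alpha^{-1}\in H$ satisfies $(\alpha g\alpha^{-1})(\alpha\ol a)=\alpha(g\ol a)=\alpha\ol b$, so $\alpha\ol a\sim_H\alpha\ol b$; the converse uses that $\alpha^{-1} h\alpha\in G$ for $h\in H$. This is a one-line conjugation computation in each direction.

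The direction $\LA$ is the substantive one, and the main tool will be the following characterisation of membership that holds \emph{precisely because $G$ is closed}: for $\beta\in\S$, one has $\beta\in G$ if and only if $(0,\dots,n-1)\sim_G(\beta(0),\dots,\beta(n-1))$ for every $n$. The forward direction is immediate; for the converse I would pick, for each $n$, a witness $g_n\in G$ agreeing with $\beta$ on $\{0,\dots,n-1\}$, observe that $g_n\to\beta$ pointwise (hence in the topology of $\S$), and conclude $\beta\in G$ by closedness. Granting this, to show $\alpha G\alpha^{-1}\sub H$ I would take $g\in G$, set $h=\alpha g\alpha^{-1}$, and check the criterion for $h\in H$: for each $n$, the isomorphism condition (applied via $\alpha^{-1}$) reduces $(0,\dots,n-1)\sim_H(h(0),\dots,h(n-1))$ to $\alpha^{-1}(0,\dots,n-1)\sim_G\alpha^{-1}(h(0),\dots,h(n-1))$, and since $\alpha^{-1}(h(i))=g(\alpha^{-1}(i))$ the right-hand tuple is exactly the image under $g\in G$ of the left-hand tuple, so the $G$-equivalence holds trivially. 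Thus $h\in H$, giving $\alpha G\alpha^{-1}\sub H$; the reverse inclusion follows by the symmetry noted above, yielding $H=\alpha G\alpha^{-1}$.

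I expect the only real obstacle to be the closedness step: the orbit equivalence structure records only same-orbit information on finite tuples, so recovering an \emph{actual} group element requires the pointwise-limit argument, and it is the hypothesis that $G$ and $H$ are closed subgroups of $\S$ that makes the finite-tuple data determine the group. Everything else is routine conjugation and tuple bookkeeping.
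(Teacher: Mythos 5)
Your proof is correct, and both directions check out: the $\RA$ direction is the same immediate conjugation computation as in the paper, and your $\LA$ direction is sound, including the one subtle point (a pointwise limit of permutations that is itself a permutation is a limit in the topology of $\S$, so closedness of $G$ in $\S$ applies). The route you take for $\LA$ differs in packaging from the paper's. The paper introduces the canonical structures $M_G$ and $M_H$ (one relation symbol per $n$-orbit), observes that $G$ is dense in $\Aut(M_G)$ and hence equal to it by closedness, arranges the symbols so that $\alpha$ is an isomorphism $M_G\cong M_H$, and concludes $\alpha^{-1}\Aut(M_H)\alpha=\Aut(M_G)$. You instead isolate the membership criterion $\beta\in G\LR\fa n\ (0,\dots,n-1)\sim_G(\beta(0),\dots,\beta(n-1))$ and prove it inline by the finite-approximation/pointwise-limit argument, then verify the criterion for $h=\alpha g\alpha^{-1}$ directly. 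The underlying fact is the same in both cases --- a closed subgroup of $\S$ is determined by its orbit equivalence relations on finite tuples --- but your version proves it from scratch rather than citing the standard identification of closed subgroups with automorphism groups of canonical structures, which makes your argument more self-contained and elementary; the paper's version is shorter and sets up the $M_G$ machinery that it reuses later (e.g.\ in the proof of Theorem~\ref{thm:EC}, where $\aleph_0$-categoricity of the orbit structures is what produces the witness $\alpha$). One tiny remark: your criterion only tests initial segments $(0,\dots,n-1)$ rather than all tuples, which is a slightly weaker hypothesis than full preservation of the orbit relations but still suffices for the limit argument, since a witness agreeing with $\beta$ on $\{0,\dots,n-1\}$ handles all shorter tuples drawn from that set.
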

 
 \begin{proof}
 \rapf  This is immediate.
 
 \lapf Let $M_G$ be the canonical structure for $G$, namely there are $k_n \le \omega$ many $n$-ary relation symbols, denoting the individual $n$-orbits. Let $M_H$ be the structure in the same signature where the  equivalence classes of $\+ E_H$  on ${}^n \omega$ are named so that $\alpha$ is an isomorphism $M_G \cong M_H$. Since  $G\le  \Aut(M_G)$, and $G$ is closed and dense, we have $G= \Aut(M_G)$; similarly,    $H = \Aut(M_H)$. Furthermore, $\alpha^{-1} \Aut(M_H) \alpha = \Aut(M_G)$.
 \end{proof}

 \begin{prop} The conjugacy relation between  oligomorphic groups is smooth. \end{prop}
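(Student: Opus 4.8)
The plan is to invoke Fact~\ref{fact:conj} to replace conjugacy by isomorphism of the orbit equivalence structures $\+ E_G$, and then to exploit $\omega$-categoricity to manufacture a complete invariant taking values in a Polish space, thereby witnessing smoothness. First I would record that by Fact~\ref{fact:conj} two oligomorphic groups $G,H$ are conjugate in $\S$ if and only if $\+ E_G \cong \+ E_H$; crucially, all the structures $\+ E_G$ live in one and the same fixed relational signature, namely one $2n$-ary symbol for each $n$. So it suffices to show that isomorphism of the $\+ E_G$ (for $G$ ranging over oligomorphic groups) is smooth.

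The key observation is that $\+ E_G$ is $\omega$-categorical. Indeed $G \le \Aut(\+ E_G)$, and passing from $G$ to a larger permutation group can only coarsen the orbits on $n$-tuples; hence $\Aut(\+ E_G)$ has at most as many $n$-orbits as $G$, so it is again oligomorphic. By the Ryll--Nardzewski theorem (see e.g.~\cite[Section~6.3]{Hodges:93}), $\+ E_G$ is therefore $\omega$-categorical, and so it is determined up to isomorphism among countable structures by its complete first-order theory $\Th(\+ E_G)$: if $\Th(\+ E_G)=\Th(\+ E_H)$, then $\+ E_H$ is a countable model of $\Th(\+ E_G)$ and hence $\+ E_H \cong \+ E_G$, the converse being immediate. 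Thus $G \mapsto \Th(\+ E_G)$, valued in the Polish space $2^{\mathrm{Sent}}$ of subsets of the countable set of $L$-sentences, is a complete invariant for conjugacy, and equality of theories is literally equality of points of this Polish space. Since the identity relation on a standard Borel space is Borel reducible to the identity on $\RR$, smoothness follows.

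It then remains to check that the invariant is Borel. The map $G \mapsto \+ E_G$ into the standard Borel space of structures with domain $\omega$ is Borel, because for fixed tuples $\bar a,\bar b$ the value of the corresponding relation is determined by $\bar a \sim_G \bar b \iff G \cap U_{\bar a,\bar b}\neq\emptyset$, and $\{G : G\cap U_{\bar a,\bar b}\neq\emptyset\}=\+ C_{U_{\bar a,\bar b}}$ is a generator of the Effros Borel $\sigma$-algebra (here $U_{\bar a,\bar b}$ is the basic open set of the preliminaries). Composing with the Borel satisfaction map $N \mapsto \{\phi : N \models \phi\}$ then yields that $G\mapsto \Th(\+ E_G)$ is Borel. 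The one step deserving genuine care is the $\omega$-categoricity argument, that is, confirming that $\Th(\+ E_G)$ really is a \emph{complete} invariant: a priori $\Aut(\+ E_G)$ could be strictly larger than $G$, and the point is precisely that it nevertheless remains oligomorphic, so that no information is lost in passing to the theory. Everything else is routine.
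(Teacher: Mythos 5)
Your proof is correct and follows essentially the same route as the paper: reduce conjugacy to isomorphism of the orbit structures $\mathcal E_G$ via Fact~4.2, and then use $G\mapsto\mathrm{Th}(\mathcal E_G)$ as a Borel complete invariant. You are in fact more careful than the paper, which leaves implicit the key point you highlight — that $\mathrm{Aut}(\mathcal E_G)\supseteq G$ is still oligomorphic, so $\mathcal E_G$ is $\omega$-categorical and the theory really does determine the structure — and your direct Effros-space verification of Borelness of $G\mapsto\mathcal E_G$ is an equally valid alternative to the paper's appeal to a countable dense subgroup.
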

\begin{proof}  The map $G \mapsto \+ E_G$  defined on closed subgroups of $\S$ is   Borel because one can in a Borel way find a countable dense subgroup of $G$, which of course  has the same orbits; based on  that subgroup one can directly construct $\+ E_G$.   

 For  countable structures $S$ in  a fixed countable  language,   mapping   $S$ to its theory $\text{Th}(S)$ is Borel.   The theory can be seen as  a subset of $\omega$, assuming a suitable encoding of the language.

 Suppose now that $G$ and $ H$ are oligomorphic closed subgroups of $\S$.  Note that $\+ E_G$ is interpretable without parameters in the   canonical structure~$N_G$ mentioned in Section~\ref{s:bi}. So  $N_G$ is $\omega$-categorical,  and hence $\+ E_G$ is  $\omega$-categorical as well.
 
 By  Fact~\ref{fact:conj} and since $\+ E_G$ and $\+ E_H$ are $\omega$-categorical,   
\bc  $G, H$ are conjugate   $\LR$ $\+ E_G \cong \+ E_H$   $\LR \Th ( \+ E_G) = \Th(\+ E_H)$,  \ec which  shows smoothness. 
\end{proof}


\subsection{Essential countability of the isomorphism relation} \label{section essential countabilility} 
    Recall that an equivalence relation  $E$ on a Polish space is called \emph{countable} if every equivalence class is countable. One says that $E$ is \emph{essentially countable}  if $E$ is Borel reducible to  a countable Borel equivalence relation. 
    
    We show that the isomorphism relation between   oligomorphic subgroups of $S_\infty$ is essentially countable.  As mentioned in the introduction, we apply   a result of Hjorth and Kechris \cite[Theorem 4.3]{Hjorth.Kechris:95} about  Borel invariant classes~$\+ C$ of countable structures. Given a finite signature, a subset $F$ of $\omc$ is called a \emph{fragment} if it is closed under syntactic  first-order operations such as quantification over elements, or substitution. Suppose first that we had a countable fragment $F$ such that each $M \in  \+ C$ is determined up to isomorphism among the countable structures  by $\Th_F(M)$, its theory in this fragment. Then $\cong_\+ C$ is smooth, because the map $M \mapsto \Th_F(M)$ is Borel.

  Their result uses a weaker hypothesis to yield a weaker conclusion. In  \cite[Theorem 4.3, (iii)$\to$(i)]{Hjorth.Kechris:95} they prove the following. 
   {Suppose  that there is a fixed  fragment $F$ as follows:   each $M \in \+ C$ contains   a tuple  of constants $\ol a$   such that $(M, \ol a)$  is determined up to isomorphism  among the countable structures  by $\Th_F(M, \ol a)$ (i.e, $\Th_F(M, \ol a)$ is $\aleph_0$-categorical).  Then  $\cong_\+ C$ is essentially countable.}
 
  Their proof proceeds as follows. They  need to obtain a countable Borel equivalence relation $E$ on a Borel space $Y$ so that $E$ is Borel  above $\cong_{\+ C}$.  The points of the Borel space $Y$ are $F$-theories of countable   models extended by finitely many constants.  Two theories are $E$-equivalent if they can be realised over isomorphic models in the language of $F$. (They verify as part of   their proof that $Y$ is indeed a Borel space on which $E$ is Borel.) The Borel reduction maps $M$ to $\Th_F(M,\ol a)$ where $\ol a$ is chosen so that   $\Th_F(M,\ol a)$ is $\aleph_0$-categorical. This is possible by a result in descriptive set theory due to Lusin-Novikov: one can in a Borel way uniformise a Borel relation that relates each $x$ to only countably many elements (see e.g.\  \cite[18.10]{Kechris:95}).

  Recall from  Lemma \ref{canonical oligomorphic action} that each   oligomorphic group  $G$ has an open subgroup $W$ such that the left translation action of  $G$ on the left cosets of $W$ is  oligomorphic, and yields a topological embedding of $G$ into $\S$.   The idea in  applying the Hjorth-Kechris result is now as follows. Given a structure~$M$  for the signature with one ternary relation satisfying the axioms so far,    require the existence of $W$ axiomatically for the action of the filter group on the (abstract)  left cosets of $W$. If $F$ is   the least fragment containing all the relevant formulas used in the axioms, then it can be shown that $(M,W) $,  for $W$ as above, is determined by its theory in $F$. Thus, $(W)$ is the tuple of constants one   adds to satisfy the hypothesis of the Hjorth-Kechris result. 
  \label{ss:EC}
 \begin{thm}  \label{thm:EC} The isomorphism relation between   oligomorphic aubgroups of $S_\infty$ is essentially countable. 
 \end{thm}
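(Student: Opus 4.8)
The plan is to combine Theorem~\ref{thm:main} with the result of Hjorth and Kechris \cite[Theorem 4.3, (iii)$\to$(i)]{Hjorth.Kechris:95} quoted in Subsection~\ref{ss:upperbound}. By Theorem~\ref{thm:main} the relation $\cong_{\text{oligomorphic}}$ is Borel bi-reducible with $\cong_{\+ B}$, where $\+ B = \+ C$ is the Borel class of countable coarse groups in the finite signature with one ternary relation. Since essential countability is preserved downward under $\le_B$, it suffices to prove that $\cong_{\+ B}$ is essentially countable. For this I would exhibit a countable fragment $F \subseteq \omc$ and, for each $M \in \+ B$, a single distinguished element $W \in M$ such that $\Th_F(M, W)$ is $\aleph_0$-categorical, i.e.\ $(M,W)$ is determined up to isomorphism among countable structures by its $F$-theory; the Hjorth--Kechris theorem then yields the conclusion directly, the constant $W$ being selected in a Borel way by Lusin--Novikov uniformization as in their proof.

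For the choice of data I would let $F$ be the least fragment containing all formulas occurring in the axioms of Section~\ref{s:coarse} (in their $\omc$ or first-order forms), together with the orbit-defining formula $\mathrm{orb}_k(\ol B, \ol B') := \exists S\, \bigwedge_{j<k}[S B_j \sqsubseteq B'_j]$ for each $k$ and the formula isolating the identity $^*$coset of a $^*$subgroup. As the distinguished constant I would take the $^*$subgroup $W$ furnished by Axiom~\ref{axiom faithful oligomorphic action}, for which $\gamma_W$ is faithful, formally oligomorphic, and strongly continuous; such a $W$ exists for every $M \in \+ B$ by Lemma~\ref{canonical oligomorphic action} and Claim~\ref{characterization of oligomorphic}.

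The heart of the argument is the categoricity claim: if $(N, W')$ is any countable structure with $\Th_F(N, W') = \Th_F(M, W)$, then $(N, W') \cong (M, W)$. Since all axioms lie in $F$, the structure $N$ again lies in $\+ C = \+ B$ and $W'$ witnesses Axiom~\ref{axiom faithful oligomorphic action}; hence by Proposition~\ref{cl:verify 2} we have $(M, W) \cong (\+ M(\+ G(M)), \hat W)$ and $(N, W') \cong (\+ M(\+ G(N)), \hat{W'})$, where $G := \+ G(M)$ and $H := \+ G(N)$ are oligomorphic closed subgroups of $\S$ acting faithfully and oligomorphically on the coset spaces $LC(W)$ and $LC(W')$. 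The formula $\mathrm{orb}_k$ defines, inside $(M,W)$, precisely the relation ``$\ol B, \ol B'$ lie in the same $G$-orbit of $LC(W)^k$'' (one direction by choosing a full filter through $S$, the other by downward directedness together with Axiom~\ref{ax:order}). Because the whole group multiplication — not merely the orbit-equivalence — is encoded by the ternary relation, the equality $\Th_F(M,W) = \Th_F(N,W')$ should transfer the full isomorphism type of the permutation group $(G, LC(W))$, with base point the identity coset $W$, to that of $(H, LC(W'))$. Concretely, I would run a back-and-forth between $(M,W)$ and $(N,W')$ matching $F$-types; the extension step succeeds because formal oligomorphicity guarantees finitely many $\mathrm{orb}_k$-classes for each $k$, while the coarse-group axioms make each $F$-type $F$-isolated and homogeneous — exactly the generalized Ryll--Nardzewski condition needed for categoricity over a countable fragment. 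The resulting isomorphism of permutation groups carries $\hat W$ to $\hat{W'}$, and transporting it through Proposition~\ref{cl:verify 2} gives $(M, W) \cong (N, W')$.

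The main obstacle is precisely this categoricity claim, and within it the point that one must recover the full oligomorphic group $G$ rather than only its orbit-equivalence structure $\+ E_G$: the latter has a strictly larger automorphism group in general (for $G = \Aut(\mathbb Q, <)$ the order-reversal preserves every orbit-equivalence yet does not belong to $G$), so naming orbit-equivalences alone would not determine $(M,W)$. What makes the argument go through is that the coarse group remembers the group operation, so that the individual orbits — and hence the canonical $\aleph_0$-categorical structure whose automorphism group is exactly $G$ — are $F$-definable from $W$; verifying carefully that this yields finitely many $F$-isolated types in each arity is the real work. A technical byproduct to confirm along the way is that $F$ is genuinely countable and that the axioms used admit the first-order or $\omc$ reformulations referenced in Subsections~\ref{ss:replace fo} and~\ref{son_quatsch}, so that $M \mapsto \Th_F(M,W)$ is Borel.
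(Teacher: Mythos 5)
Your overall frame coincides with the paper's: invoke the Hjorth--Kechris criterion from Subsection~\ref{ss:upperbound} with a single distinguished constant $W$, so that everything reduces to showing $\Th_F(M,W)$ is $\aleph_0$-categorical. But the categoricity claim is the entire content of the theorem, and at that point your argument both leaves the real work undone and rejects, on mistaken grounds, the route that actually closes it. Your back-and-forth sketch rests on the assertions that ``the coarse-group axioms make each $F$-type $F$-isolated and homogeneous''; these are precisely what would need to be proved, and formal oligomorphicity only bounds the number of orbits on tuples from $LC(W)$, whereas a back-and-forth on $(M,W)$ must extend partial maps through arbitrary tuples of cosets of arbitrary $^*$subgroups. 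The paper avoids this entirely: given $\Th_F(M,W)=\Th_F(N,Z)$, it passes to $G=\+G_W(M)$ and $H=\+G_Z(N)$, notes that the pointed orbit-equivalence structures $(\+E_G,W)$ and $(\+E_H,Z)$ are interpretable in $(M,W)$ and $(N,Z)$ by the same formulas, hence elementarily equivalent, hence isomorphic via some $\alpha\in\S$ by $\aleph_0$-categoricity; Fact~\ref{fact:conj} then yields $\alpha^{-1}H\alpha=G$ with $\alpha(W)=Z$, and $B\mapsto\alpha B\alpha^{-1}$ is the desired isomorphism of the pointed coarse groups via Proposition~\ref{cl:verify 2}. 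Your stated obstacle --- that $\Aut(\+E_G)$ is strictly larger than $G$, e.g.\ order-reversal for $\Aut(\QQ,<)$ --- misidentifies what is needed: one need not recover $G$ as $\Aut(\+E_G)$, only up to conjugacy, and an isomorphism of the orbit structures \emph{is} a conjugacy by Fact~\ref{fact:conj} (order-reversal normalizes $\Aut(\QQ,<)$, consistently with this). Conjugacy carrying $W$ to $Z$ is exactly an isomorphism $(M,W)\cong(N,Z)$.

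A second, separate gap: the faithfulness clause of Axiom~\ref{axiom faithful oligomorphic action} is $\Pi^1_1$ (it quantifies over all full filters $x$), so it cannot be placed in a countable fragment $F$ of $\omc$; yet your argument needs ``$W'$ witnesses Axiom~\ref{axiom faithful oligomorphic action}'' to be a consequence of $\Th_F(N,W')=\Th_F(M,W)$. The paper handles this by replacing that axiom with Axiom~\ref{axiom faithful Borel version}, built on the first-order formula $\delta(V)$, and separately verifying that $\delta$ holds in $\+M(G)$ for oligomorphic $G$ and that $\delta(W)$ implies faithfulness of $\gamma_W$ (Claim~\ref{cl:ff}). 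Your closing remark points to Subsections~\ref{ss:replace fo} and~\ref{son_quatsch}, but those concern only the associativity Axiom~\ref{axiom product of filters}, not faithfulness, so this reformulation is missing from your plan.
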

 
 \begin{proof}     Recall that $R$ is a ternary relation symbol.  Also recall that in Section~\ref{ss:plan} above  we denoted by $\+ B$  the closure under isomorphism of the range of the map~$\+ M$.  We showed in Section~\ref{s: turning} that $\+ B$ is Borel. So by the L\'opez-Escobar theorem there is   $\sss \in L_{\omega_1, \omega}(R)$ such that $M \in \+ B \LR M \models \sss$ for each model $M$. Let $\cong_\sigma$ denote the isomorphism relation on $\+ B$.  
 
 Let $F$  be the smallest  fragment of $L_{\omega_1, \omega}(R)$ containing $\sss$. Note that $F$ is countable. For a structure $M$   and $n$-tuple $\ol a$ in $M$, by  $\Th_F(M, \ol a)$ one denotes $\{ \phi(x_1, \ldots x_n) \in F \colon  (M, \ol a) \models \phi\}$. 
 
  By  Hjorth and Kechris \cite[Theorem 4.3]{Hjorth.Kechris:95}, the following are equivalent.
  
  \bi \item[(i)] $\cong_\sigma$ is essentially countable  \item[(ii)] for each $M\in \+ B$ there is  a tuple $\ol a $ in $M$ such that $\Th_F(M, \ol a)$ is $\aleph_0$-categorical. \ei
  We will verify (ii), where the tuple $\ol a$  has length 1: it  consists of the witness~$W$ for a stronger    version of  Axiom~\ref{axiom faithful oligomorphic action}.  The problem with our formulation of faithfulness in that axiom is that it is only $\Pi^1_1$ and hence cannot be used in a fragment. Instead, let $\delta(V)$ denote the following first-order formula, which implies  that $\gamma_V$ is faithful, as will be verified shortly: 

\medskip 
$\fa U\     \fa A \in LC(U) \setminus \{U\}\ \ex U' \sq  U $ 

\ \ \ \ \ \ \ $\fa A' \sq A, A' \in LC(U')\ \ex C \in LC(V)\ \ex D\in LC(V)\setminus\{C\}\  A'C\sqsubseteq D$. 

\begin{axiom}[Replaces Axiom~\ref{axiom faithful oligomorphic action}] \label{axiom faithful Borel version} 
There is a $^*$subgroup $W$ in $M$  such that   $M \models \delta(W)$, and  the action  $\gamma_W$ defined in (\ref{eqn:gamma})  is formally oligomorphic and strongly  continuous.
 \end{axiom} 

We claim that this  condition holds in $\+ M(G)$, for any    oligomorphic closed subgroup $G$  of $S_\infty$. 
By Lemma~\ref{canonical oligomorphic action} we may assume that the action of $G$ on~$\omega$  has a single 1-orbit.
Let $W=G_{0}$, the stabilizer of $0$. 

Suppose we are given an open subgroup $U$ of $G$, and let  $A \in LC(U)$, $A \neq U$.  By definition of the subspace topology on $G$,  there is tuple $\vec{y}$ of natural numbers such that  $U'=G_{\vec{y}}$ is contained in $U$. Take a left coset $A'=g U'\sqsubseteq A$. Since $A \neq U$ we have  $A' \neq U'$, and hence $g(y_j) \neq y_j$ for some $j$, say $j=0$. 
 Let  $h\in G$ with $h(0)=y_0$.    By definition of $g $ and since $U' \le h W h^{-1}= G_{y_0}$, 
 
 \bc $A'hW h^{-1} = g U' h W h^{-1}  = g h W h^{-1} \neq h W h^{-1} $. \ec
 Thus,  where  $C = hW$,   $A'C$  is a coset of $W$ different from $C$, as 
  required. 

\begin{claim}  \label{cl:ff}
If $\delta(W)$ holds, then $\gamma_W$ is faithful. 
\end{claim} 
\begin{proof} 
Suppose that $x\neq 1$ is a full filter of  $M$. Then there is a $^*$subgroup $U $ and  $A \in LC(U)$ such that  $A \in x$ and $A \neq U$.   We choose $U'$  as in the statement $\delta(W)$. Let  $A' $  be   the unique  $^*$coset  in $LC(U')$ such that $A' \in x$. Then $A ' \sq A$ by Axiom~\ref{axiom disjointness of cosets} and since $x$ is a filter. Choose $C\in LC(W) $ for this $A'$. Then $A'C \sqsubseteq D \neq C$, so  $x \cdot C \neq C$ as required.
\end{proof} 
Let $F$ be a countable  fragment of $L_{\omega_1, \omega}$ containing $\sss$, $\delta$ and the other  formulas needed to express Axiom~\ref{axiom faithful Borel version}.  The following now verifies Condition~(ii) in the Hjorth-Kechris theorem for this fragment. 
  \begin{claim} Suppose that $M, N \in \+ B$. Let  $W \in M$ be a  witness to Axiom~\ref{axiom faithful Borel version} for $M$. Let    $Z \in N$ be a $^*$subgroup  such that  $\Th_F(M,W ) = \Th_F (N,Z)$. Then $(M,W ) \cong (N,Z)$.   \end{claim}
  
  \begin{proof}	 Note that   $Z \in N$ is a  witness for  Axiom~\ref{axiom faithful Borel version} in  $N$ by definition of the fragment $F$.  Let $LC_M(W)$ denote the set of left $^*$cosets of $W$ in $M$, and similarly let $LC_N(Z)$ denote the set of left $^*$cosets of $Z$ in $N$; both sets are  identified with a set of natural numbers as explained in Remark~\ref{rem:identify with N}.  As in Condition (c) before Axiom~\ref{axiom faithful oligomorphic action} above, by    $  \+ G_W(M)$ we denote  the range of the natural embedding $\+ F(M) \to \S$ given by the action $\gamma_W$ of $\+ F(M) $ on $LC_M(W)$, and similarly for  $  \+ G_Z(N)$.  By the proof of Prop.~\ref{cl:verify 2}, we have  
  \begin{equation} \label{eqn:jjkj} (\+ M( \+ G_W(M)) , \hat W) \cong (M, W) \text{ and }(\+ M( \+ G_Z(N)) , \hat Z) \cong (N, Z). \end{equation} 
  Thus it suffices to show that the structures on the left sides are isomorphic. 
  
  Write $G=  \+ G_W(M)$ and $H =  \+ G_Z(N)$.
   As in   Fact~\ref{fact:conj}, let $\+ E_G$ and $\+ E_H$ be the corresponding orbit equivalence structures for the actions of $G$ on $LC_M(W)$ and of $H$ on $LC_N(Z)$. By our hypothesis we have $(M,W) \equiv  (N,Z)$ (i.e., the two structures have the same first-order theory).  By the definition of the group actions $\gamma_W$ and $\gamma_Z$, the structure $(\+ E_G, W)$ is interpretable in $(M,W)$, and similarly  $(\+ E_H, Z)$ is interpretable in $(N,Z)$ using the same collection of formulas.  This implies that  $(\+ E_G, W) \equiv (\+ E_H, Z)$. 
   
   Since $G$ is  oligomorphic,  the orbit equivalence  structures $\+ E_G$ and $\+ E_H$    are $\aleph_0$-categorical. Hence so are $(\+ E_G, W) $ and $ (\+ E_H, Z)$;  let $\alpha \in \S $ witness  that  $(\+ E_G, W) \cong (\+ E_H, Z)$.  
  
  As in   the proof of Fact~\ref{fact:conj},  $\alpha^{-1} H \alpha = G$. Since $\alpha(W)= Z$ and $\hat W$ is the stabiliser of $W$ and $\hat Z$ is the stabiliser of $Z$, we have $\alpha^{-1} \hat Z  \alpha = \hat W$. 
 Thus the map $B \mapsto \alpha B \alpha^{-1}$, for $B$ an open coset of $G$, is an isomorphism for the left hand side structures in (\ref{eqn:jjkj}),  as required. 
  \end{proof}

%
%
%

%
 
This completes the proof of Theorem~\ref{thm:EC}. 
  \end{proof}
  
%

\subsection{Extension of the upper bound  to the class of  quasi-oligomorphic groups}

  A closed subgroup $G $ of $ \S$ will be called \emph{quasi-oligomorphic} if it is (topologically) isomorphic to a an  oligomorphic subgroup $H$ of $  \S$. 
Note that $H$, and hence $G$, is Roelcke precompact.
\begin{fact} The class of quasi-oligomorphic groups is Borel. \end{fact}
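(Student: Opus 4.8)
The plan is to characterise quasi-oligomorphicity entirely through the coarse group operator and then read off Borelness from results already proved. The key equivalence I would record is that, for a closed subgroup $G$ of $\S$,
\[ G \text{ is quasi-oligomorphic} \iff G \text{ is Roelcke precompact and } \+ M(G)\in \+ B, \]
where $\+ B$ is the class from Subsection~\ref{rem:plan}, i.e.\ the closure under isomorphism of the range of $\+ M$ on oligomorphic groups. Both conditions on the right-hand side are already known to be Borel, and the map $\+ M$ is Borel on the (Borel) domain of Roelcke precompact groups, so the class of quasi-oligomorphic groups will be exhibited as the $\+ M$-preimage of a Borel set.

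First I would prove the equivalence. For the forward direction, suppose $G\cong H$ with $H$ oligomorphic. As observed just before the statement, $H$, and hence $G$, is Roelcke precompact, so $\+ M(G)$ is a countable structure and $\+ M$ is defined at $G$. Since both $G$ and $H$ are Roelcke precompact, equation~(\ref{eqn:KNS}) applies and gives $\+ M(G)\cong \+ M(H)$; as $H$ is oligomorphic, $\+ M(H)$ lies in the range of $\+ M$ on oligomorphic groups, so $\+ M(G)\in \+ B$ by definition of $\+ B$. For the converse, suppose $G$ is Roelcke precompact with $\+ M(G)\in \+ B$. Choose an oligomorphic $H$ with $\+ M(G)\cong \+ M(H)$. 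Both $G$ and $H$ are Roelcke precompact, so by~(\ref{eqn:KNS}) again we get $G\cong H$, whence $G$ is quasi-oligomorphic.

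Finally I would assemble the Borelness statement. Roelcke precompactness of closed subgroups of $\S$ is a Borel property, as noted in the introduction following~\cite{Kechris.Nies.etal:18}; the Borel map $\+ M$ of Subsection~\ref{ss:KNT review} is defined on this Borel set of Roelcke precompact groups; and $\+ B$ is Borel by the argument in Section~\ref{s: turning} (after Proposition~\ref{cl:verify 2}). Consequently $\+ M^{-1}(\+ B)$ is a Borel subset of the Borel domain of $\+ M$, hence Borel in $\+ E(\S)$, and by the equivalence above it is exactly the class of quasi-oligomorphic groups. The proof is short because the real content has been front-loaded into~(\ref{eqn:KNS}) and the Borelness of $\+ B$; the only point requiring care is checking that~(\ref{eqn:KNS}) is legitimately invoked in both directions, which is guaranteed precisely because quasi-oligomorphic groups are automatically Roelcke precompact. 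I expect no further obstacle.
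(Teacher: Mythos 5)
Your proof is correct and follows essentially the same route as the paper: both reduce the statement to the equivalence ``$G$ quasi-oligomorphic $\LR$ $G$ Roelcke precompact and $\+ M(G)\in\+ B$'' and then cite Borelness of Roelcke precompactness, of $\+ M$, and of $\+ B$. The only (harmless) difference is in the backward direction, where you invoke~(\ref{eqn:KNS}) together with the definition of $\+ B$ as an isomorphism closure, while the paper instead passes through $\+ F(\+ M(G))\cong G$ and the operator $\+ G$; both arguments are valid.
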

\begin{proof} Recall from Section~\ref{ss:KNT review}  that Roelcke precompactness  is a Borel property of closed subgroups $G $ of $ \S$, and that the operator $\+ M$ is defined for all Roelcke precompact groups $G$.  We claim that for such a group~$G$, 
\bc $G$ is quasi-oligomorphic $\LR$ $\+ M(G) \in \+ B$. \ec
Since $\+ B$ is Borel, this will suffice to establish the fact.

For the implication  ``$\RA$", suppose that $G \cong H$ where $H$ is oligomorphic.  Then $\+ M(G) \cong \+ M(H)  \in \+ B$, so $\+ M(G)  \in \+ B$ as the class $\+ B$ is closed under isomorphism. 

For the implication  ``$\LA$", first recall that $\+ F (\+ M(G)) \cong G$   since $G$ is Roelcke precompact (Prop.~\ref{fact:standard}). Now   suppose that $\+ M(G) \in \+ B$. Then $\+ G( \+ M(G))$ is defined and oligomophic. Since  $\+ G( \+ M(G)) \cong  \+ F( \+ M(G))$, this implies that $G$ is quasi-oligomorphic. 
\end{proof}

Combining  the following   with Theorem~\ref{thm:EC} shows that the isomorphism relation on the class of quasi-oligomorphic groups is essentially countable.
\begin{cor} Isomorphism on the class of quasi-oligomorphic groups is Borel equivalent to isomorphism on oligomorphic groups. \end{cor}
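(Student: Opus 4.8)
The plan is to establish the two Borel reductions separately. The reduction $\cong_{\text{oligomorphic}} \le_B \cong_{\text{quasi-oligomorphic}}$ is trivial: every oligomorphic closed subgroup of $\S$ is quasi-oligomorphic (it is isomorphic to itself), so the inclusion of the space of oligomorphic groups into the space of quasi-oligomorphic groups is an ``identity'' Borel reduction. The content is therefore entirely in the opposite reduction $\cong_{\text{quasi-oligomorphic}} \le_B \cong_{\text{oligomorphic}}$.

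For that direction I would simply compose the two Borel operators already built. Concretely, I define $\Psi(G) = \+ G(\+ M(G))$ on the class of quasi-oligomorphic groups. This is well-defined: such a $G$ is Roelcke precompact (as noted just before the preceding Fact), so $\+ M(G)$ is a countable $L$-structure, and by that Fact we have $\+ M(G) \in \+ B$, which is exactly the domain on which $\+ G$ is defined. Borel-ness of $\Psi$ follows because it is the composition of $\+ M$ (Borel on Roelcke precompact groups, Section~\ref{ss:KNT review}) with $\+ G$ (Borel on the Borel class $\+ B = \+ C$, Section~\ref{s: turning}). By construction $\+ G(M)$ is always an oligomorphic closed subgroup of $\S$, so $\Psi$ does land in the intended target space.

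The only genuine verification is that $\Psi$ preserves and reflects isomorphism, and for this it suffices to show $\Psi(G) \cong G$ topologically for every quasi-oligomorphic $G$; then $G \cong H \LR \Psi(G) \cong \Psi(H)$ is automatic, since both sides hold iff $G \cong H$. I would obtain this identity in two steps: first, $\+ G(\+ M(G))$ is by definition the range of the topological embedding $\Theta_W$ of the filter group, so $\+ G(\+ M(G)) \cong \+ F(\+ M(G))$; second, since $G$ is Roelcke precompact, Proposition~\ref{fact:standard} yields $\+ F(\+ M(G)) \cong G$. I expect the main point to watch is not a real obstacle: all the hard work — Borelness of $\+ B$, the inversion identities of Proposition~\ref{cl:verify 2}, and the recovery $\+ F(\+ M(G)) \cong G$ — is already in place, so the argument reduces to assembling these facts and confirming that the composite map remains within the Effros-Borel framework.
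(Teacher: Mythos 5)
Your proposal is correct and follows essentially the same route as the paper: the reduction is the composite map $G \mapsto \+ G(\+ M(G))$, which is Borel and satisfies $\+ G(\+ M(G)) \cong G$, together with the trivial inclusion in the other direction. The only (immaterial) difference is that you justify $\+ G(\+ M(G)) \cong G$ directly via $\+ G(\+ M(G)) \cong \+ F(\+ M(G)) \cong G$ using Proposition~\ref{fact:standard}, whereas the paper routes through an oligomorphic $H \cong G$ and the identity $\+ G(\+ M(H)) \cong H$; both justifications are valid and rest on the same established facts.
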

\begin{proof}   If  $G$ is  isomorphic  to an oligomorphic group $H$ then  $\+ M(G) \cong \+ M(H)$ and hence $\+ G(\+ M(G)) \cong H \cong G$. Since $\+ G(\+ M(G))$ is oligomorphic,  the map $G \mapsto \+ G(\+ M(G))$ provides a Borel reduction of the equivalence relation in question to isomorphism of oligomorphic groups.  The converse reduction exists trivially because the two classes are Borel. \end{proof}

\begin{remark} {\rm In contrast, conjugacy of quasi-oligomorphic groups is Borel above isomorphism of oligomorphic groups by the proof of  \cite[Thm.\ 3.1]{Kechris.Nies.etal:18}, and therefore unlikely to be smooth.} \end{remark}

\begin{remark} {\rm We note that the centre $C(G)$ of an oligomorphic group $G$  is finite, and $G/C(G)$ is quasi-oligomorphic in a natural way.  See the post~\cite[Section 4]{LogicBlog:18}, which is joint work with I.\ Kaplan. }
\end{remark}

\subsection{Replacing the $\Pi^1_1$ Axiom \ref{axiom product of filters} \label{son_quatsch}
 by a  first-order axiom} \label{ss:replace fo}
In this subsection, we replace the $\Pi^1_1$ condition in Axiom \ref{axiom product of filters}  (associativity of filter product)  with a   first-order axiom.   {This axiom can be verified  in case that  $M=\+ M(G)$, for any closed subgroup $G$ of $\S$.} The  other axioms  are given by computable  $\omc$ sentences of   finite rank (recall that  we   already replaced  Axiom~\ref{axiom faithful oligomorphic action}   by  Axiom~\ref{axiom faithful Borel version} which is in such a $\omc$ form).  So the class $\+ C$ of coarse groups for oligomorphic groups  is arithmetical.  This class coincides with the class of coarse groups for quasi-oligomorphic groups.


%
The following replaces  Axiom \ref{axiom product of filters}.
Recall that products of appropriate pairs of elements of $M$  are defined immediately after Axiom \ref{axiom products of left and right cosets}. 

\begin{axiom} \label{axiom associativity2} 
If $A\in RC(T)\cap LC(U)$, $B\in RC(U)\cap LC(V)$ and $C\in RC(V)\cap LC(W)$, then $(A\cdot B)\cdot C=A\cdot (B\cdot C)$.  
\end{axiom}  

The products in Axiom \ref{axiom associativity2} are well-defined by Axiom \ref{axiom products of left and right cosets}. 
The axiom holds in $\+M(G)$ since $A\cdot B=AB$ whenever the product $A\cdot B$ is defined. 

\begin{claim}  
The operation $\cdot$ on $\F(M)$ is associative. 
\end{claim}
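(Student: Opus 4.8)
The plan is to show that $x\cdot(y\cdot z)=(x\cdot y)\cdot z$ for arbitrary $x,y,z\in\F(M)$, and to reduce this to a single inclusion. The key structural fact I would use is that full filters are maximal: every element of $M$ is a left $^*$coset of some $^*$subgroup (Axiom~\ref{axiom basics}(b)), and a filter contains at most one left $^*$coset of each $^*$subgroup (two distinct ones are disjoint by Axiom~\ref{axiom disjointness of cosets}, so they have no common lower bound inside a downward directed set). Hence if one full filter is contained in another they coincide. Both $x\cdot(y\cdot z)$ and $(x\cdot y)\cdot z$ are full filters by Claim~\ref{product of filters is a filter}, so it suffices to prove the inclusion $x\cdot(y\cdot z)\subseteq(x\cdot y)\cdot z$; the reverse follows symmetrically, or by maximality.

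To prove the inclusion, I would fix $D\in x\cdot(y\cdot z)$ and unwind the definition of the filter product: there are $A\in x$ and $E\in y\cdot z$ with $AE\sqsubseteq D$, and then $B\in y$, $C\in z$ with $BC\sqsubseteq E$. The goal is to exhibit a single element of $(x\cdot y)\cdot z$ lying $\sqsubseteq D$, which forces $D\in(x\cdot y)\cdot z$ by upward closure. The natural candidate is $(A'\cdot B')\cdot C''$ for suitably refined cosets $A'\sqsubseteq A$ in $x$, $B'\sqsubseteq B$ in $y$, and $C''\sqsubseteq C$ in $z$; the purpose of refining is to make the three cosets \emph{compatible}, so that the first-order Axiom~\ref{axiom associativity2} applies and equates $(A'\cdot B')\cdot C''$ with $A'\cdot(B'\cdot C'')$.

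I expect this refinement to be the main obstacle, since the left and right $^*$subgroups of a single coset are linked and cannot be prescribed independently, so the three cosets cannot be matched in one stroke. I would carry it out in three stages, each invoking fullness of the relevant filter together with the disjointness dichotomy of Axiom~\ref{axiom disjointness of cosets} (the two-element matching device already used in Claim~\ref{product of filters is a filter}). First, match $B$ with $C$: shrink them to $B_1\sqsubseteq B$ in $y$ and $C'\sqsubseteq C$ in $z$ that are respectively a left and a right $^*$coset of a common $^*$subgroup $V$. Second, match $A$ with $B_1$: shrink to $A'\sqsubseteq A$ in $x$ and $B'\sqsubseteq B_1$ in $y$ that are respectively a left and a right $^*$coset of a common $^*$subgroup $U$; this further shrinking of $B_1$ replaces its left $^*$subgroup $V$ by some $V'\sqsubseteq V$. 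Third, repair the $B$--$C$ match: since $B'\in LC(V')$ with $V'\sqsubseteq V$ and $C'\in RC(V)$, use fullness of $z$ to produce a right $^*$coset $C''$ of $V'$ in $z$, which is then forced $\sqsubseteq C'$ because $z$ contains a unique right $^*$coset of $V$ (Axiom~\ref{axiom cosets for smaller subgroups} and Axiom~\ref{axiom disjointness of cosets}). Now $A'\in LC(U)$, $B'\in RC(U)\cap LC(V')$ and $C''\in RC(V')$ are exactly the compatibility hypotheses of Axiom~\ref{axiom associativity2}, and all four intermediate products are defined by Axiom~\ref{axiom products of left and right cosets}, so $(A'\cdot B')\cdot C''=A'\cdot(B'\cdot C'')$.

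The remainder is routine bookkeeping with monotonicity (Axiom~\ref{ax:order}) and the minimality built into $\cdot$. From $BC\sqsubseteq E$ together with $B'\sqsubseteq B$ and $C''\sqsubseteq C$ one gets $B'C''\sqsubseteq E$, hence $B'\cdot C''\sqsubseteq E$; then from $AE\sqsubseteq D$ with $A'\sqsubseteq A$ one gets $A'(B'\cdot C'')\sqsubseteq D$, hence $A'\cdot(B'\cdot C'')\sqsubseteq D$. On the other hand $A'\cdot B'\in x\cdot y$, since $A'B'\sqsubseteq A'\cdot B'$ with $A'\in x$ and $B'\in y$, and therefore $(A'\cdot B')\cdot C''\in(x\cdot y)\cdot z$. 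Combining this with the identity from Axiom~\ref{axiom associativity2} yields an element of $(x\cdot y)\cdot z$ that is $\sqsubseteq D$, so $D\in(x\cdot y)\cdot z$. This establishes the inclusion, and maximality of full filters then gives $x\cdot(y\cdot z)=(x\cdot y)\cdot z$, as required.
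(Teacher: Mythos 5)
Your proposal is correct and follows essentially the same route as the paper's proof: reduce to a single inclusion via maximality of full filters, shrink the three cosets to a compatible triple using fullness and the disjointness dichotomy, apply the first-order Axiom~\ref{axiom associativity2}, and finish with monotonicity and the minimality of $\cdot$. The only (immaterial) differences are that you prove the mirror-image inclusion $x\cdot(y\cdot z)\subseteq(x\cdot y)\cdot z$ and therefore perform the pairwise matching in the opposite order.
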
 
\begin{proof} 
It suffices to show $(x\cdot y)\cdot z \subseteq x\cdot(y\cdot z)$ for any $x,y,z\in \F(M)$ since full filters are maximal filters. 

Let  $S\in (x\cdot y)\cdot z$. Find $T\in x\cdot y$ and $C\in z$ with $T C\sqsubseteq S$. Since $T\in x\cdot y$, there are $A\in x$ and $B\in y$ with $A B\sqsubseteq T$. 

We may assume that $A\in LC(U)$, $B\in RC(U)\cap LC(V)$ and $C\in RC(V)$ for some $^*$subgroups $U,V$ by shrinking $A, B, C$ similar as in  the proof of Claim~\ref{product of filters is a filter}. 
In more detail, suppose that $A\in LC(U_0)$ and $B\in RC(\wt U)$. 
Take a $^*$subgroup $U_1\sqsubseteq U_0, \wt U$ by Axiom \ref{basic axioms}(a). 
There is some $A'\in LC(U_1)\cap x$, since $x$ is a full filter, and $A'\sqsubseteq A$ by Axiom \ref{axiom disjointness of cosets}. 
We can similarly find some $B'\sqsubseteq B$ in $RC(U_1)\cap y$. 

Next, suppose that $B'\in LC(V_0)$, $C\in RC(\wt V)$ and take a $^*$subgroup $V_1\sqsubseteq V_0, \wt V$. 
Find $B''\sqsubseteq B'$ in $LC(V_1 )\cap y$ and $C'\sqsubseteq C$ in $RC(V_1)\cap z$. 
Let $U_2$ be a $^*$subgroup with $B''\in RC(U_2)$. Since $B' \in RC(U_1)$ and $B'' \sq B'$, we have  $U_2\sqsubseteq U_1$ by Axiom \ref{axiom cosets for smaller subgroups}. 
There is some $A''\in LC(U_2)\cap x$, since $x$ is a full filter, and $A''\sqsubseteq A'$ by Axiom \ref{axiom cosets for smaller subgroups}. Thus $A''$, $B''$, $C'$ and $U=U_2$, $V=V_1$ are as required. 

We are now ready to  show that $S\in x\cdot (y\cdot z)$.  
Since $A\cdot B\sqsubseteq T$ and $T C\sqsubseteq S$, $(A\cdot B)C\sqsubseteq S$ by monotonicity. Thus $(A\cdot B)\cdot C\sqsubseteq S$ holds by the definition of the product.  Axiom \ref{axiom associativity2} yields  that $A\cdot ( B\cdot C)\sqsubseteq S$.
Now
$A\cdot (B\cdot C)\in x\cdot(y\cdot z)$ holds by Axiom \ref{axiom products of left and right cosets} and the above assumptions on $A, B, C$.      So $S \in x\cdot(y\cdot z)$ as required.
%
%
\end{proof}

As a consequence we have obtained:
\begin{prop} 
\

\n The class $\+ C$ of coarse groups for oligomorphic groups  is arithmetical. \end{prop}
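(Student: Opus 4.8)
The plan is to exhibit a finite list of first-order and computable $\omc$ sentences of finite quantifier rank whose models (among $L$-structures with domain $\omega$, coded in $X_L=\+ P(\omega^3)$) are exactly the members of $\+ C$; since satisfaction of such sentences is an arithmetical condition on the code of the structure, this gives the claim. Recall from Section~\ref{s: turning} that $\+ C$ equals $\+ B$, the closure under isomorphism of the range of $\+ M$, and was originally defined as the class of models of the full list of axioms collected so far. Only two of those axioms carry a genuine second-order quantifier ranging over full filters: associativity of the filter product (Axiom~\ref{axiom product of filters}) and faithfulness of $\gamma_W$ (the offending clause of Axiom~\ref{axiom faithful oligomorphic action}). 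So the first task is to see that each may be traded for an arithmetical condition without changing the class of models.

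For associativity I would argue that the class is unchanged if Axiom~\ref{axiom product of filters} is replaced by the first-order Axiom~\ref{axiom associativity2}: the Claim just proved shows that Axiom~\ref{axiom associativity2}, over the remaining axioms, forces $\cdot$ on $\+ F(M)$ to be associative, while conversely Axiom~\ref{axiom associativity2} holds in every $\+ M(G)$, hence is satisfied by every $M\in\+ B$. For faithfulness I would use the replacement already carried out in Subsection~\ref{ss:EC}, where Axiom~\ref{axiom faithful oligomorphic action} is superseded by Axiom~\ref{axiom faithful Borel version}: there the faithfulness clause becomes the first-order formula $\delta(W)$, which forces $\gamma_W$ faithful by Claim~\ref{cl:ff} and holds in every $\+ M(G)$ by Lemma~\ref{canonical oligomorphic action}, while the remaining clauses (formally oligomorphic, strongly continuous) are in $\omc$ of finite rank. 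Together with Proposition~\ref{cl:verify 2} these two substitutions yield an axiom system, free of filter quantifiers, whose model class is exactly $\+ B=\+ C$.

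It then remains to account for the infinitary but set-quantifier-free conditions: being formally oligomorphic (Remark~\ref{rem: abc}(b)), strong continuity of $\gamma_W$ in the form~(\ref{eqn: fifi2}) (equivalent to~(\ref{eqn: fifi}) by the Claim preceding Axiom~\ref{axiom faithful oligomorphic action}), and Axiom~\ref{axiom Roelcke}. Each is built from first-order matrices by computable conjunctions $\bigwedge_k$ and disjunctions $\bigvee_n$ over numerical parameters, together with finite blocks of element quantifiers whose length grows with $k$ or $n$ but which I would absorb into single number quantifiers by coding tuples as integers. The point to verify here is that, although the length of a quantifier block grows, the alternation depth stays uniformly bounded, so each condition sits at a fixed finite level of the arithmetical hierarchy.

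Finally I would invoke the standard principle that satisfaction of a computable $\omc$ sentence of finite quantifier rank is an arithmetical condition on the coded structure, uniformly, at a level determined by the rank; exhibiting $\+ C$ as the intersection of the finitely many resulting arithmetical classes then finishes the proof. I do not expect a genuine obstacle: the two essential reductions (associativity and faithfulness to first order) are already in hand, and what is left is the bookkeeping of checking bounded alternation for the three infinitary axioms and appealing to the computable-infinitary-to-arithmetical transfer. The only place demanding care is the verification that the new, filter-quantifier-free axiom system has exactly the models of the original one, which rests on the equivalences just cited rather than on any new computation.
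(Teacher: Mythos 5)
Your proposal is correct and follows essentially the same route as the paper: replace the $\Pi^1_1$ associativity axiom by the first-order Axiom~\ref{axiom associativity2} (justified by the Claim that it forces associativity of the filter product and holds in every $\+ M(G)$), use the already-performed replacement of Axiom~\ref{axiom faithful oligomorphic action} by Axiom~\ref{axiom faithful Borel version} to eliminate the filter quantifier in the faithfulness clause, and observe that the remaining axioms are computable $\omc$ sentences of finite rank, so that satisfaction is arithmetical. The paper's own proof is just this observation stated tersely at the head of Subsection~\ref{ss:replace fo}; your additional bookkeeping about bounded quantifier alternation is consistent with it.
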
  

\def\cprime{$'$} \def\cprime{$'$}

%


\begin{thebibliography}{10}

\bibitem{Ahlbrandt.Ziegler:86}
G.~Ahlbrandt and M.~Ziegler.
\newblock Quasi finitely axiomatizable totally categorical theories.
\newblock {\em Annals of Pure and Applied Logic}, 30(1):63--82, 1986.

\bibitem{Becker.Kechris:96}
H.~Becker and A.~Kechris.
\newblock {\em The descriptive set theory of Polish group actions}, volume 232.
\newblock Cambridge University Press, 1996.

\bibitem{Cameron:90}
P.~Cameron.
\newblock {\em Oligomorphic permutation groups}, volume 152.
\newblock Cambridge University Press, 1990.

\bibitem{LogicBlog:18}
A.~Nies (editor).
\newblock Logic {B}log 2018.
\newblock Available at \url{http://arxiv.org/abs/1902.08725}, 2018.

\bibitem{LogicBlog:20}
A.~Nies (editor).
\newblock Logic {B}log 2020.
\newblock Available at
  \url{https://arxiv.org/pdf/2101.09508}, 2020.

\bibitem{Evans.Hewitt:90}
D.~Evans and P.~Hewitt.
\newblock Counterexamples to a conjecture on relative categoricity.
\newblock {\em Annals of Pure and Applied Logic}, 46(2):201--209, 1990.

\bibitem{Friedman.MottoRos:11}
S.~Friedman and L.~Motto~Ros.
\newblock Analytic equivalence relations and bi-embeddability.
\newblock {\em Journal of Symbolic Logic}, 76(1):243--266, 2011.

\bibitem{Gao:09}
S.~Gao.
\newblock {\em Invariant descriptive set theory}, volume 293 of {\em Pure and
  Applied Mathematics (Boca Raton)}.
\newblock CRC Press, Boca Raton, FL, 2009. 

\bibitem{Henson:72}
C.~W. Henson.
\newblock Countable homogeneous relational structures and
  $\aleph_0$-categorical theories.
\newblock {\em The Journal of Symbolic Logic}, 37(3):494--500, 1972.

\bibitem{Hjorth.Kechris:95}
G.~Hjorth and A.~Kechris.
\newblock Borel equivalence relations and classifications of countable models.
\newblock {\em Annals of pure and applied logic}, 82(3):221--272, 1996.

\bibitem{Hodges:93}
W.~Hodges.
\newblock {\em Model Theory}.
\newblock Encyclopedia of Mathematics. Cambridge University Press, Cambridge,
  1993.

\bibitem{Kechris.Nies.etal:18}
A.~Kechris, A.~Nies, and K.~Tent.
\newblock The complexity of topological group isomorphism.
\newblock {\em The Journal of Symbolic Logic}, 83(3):1190--1203, 2018.

\bibitem{Kechris:95}
A.~S. Kechris.
\newblock {\em Classical descriptive set theory}, volume 156.
\newblock Springer-Verlag New York, 1995.

\bibitem{Lawson:98}
M.~Lawson.
\newblock {\em Inverse semigroups: the theory of partial symmetries}.
\newblock World Scientific, 1998.

\bibitem{Moschovakis:80}
Y.~Moschovakis.
\newblock {\em Descriptive set theory}, volume 100 of {\em Studies in Logic and
  the Foundations of Mathematics}.
\newblock North-Holland Publishing Co., Amsterdam, 1980.

\bibitem{MottoRos:12}
L.~Motto~Ros.
\newblock On the complexity of the relations of isomorphism and
  bi-embeddability.
\newblock {\em Proceedings of the American Mathematical Society},
  140(1):309--323, 2012.

\bibitem{Rubin:94}
M.~Rubin.
\newblock On the reconstruction of categorical structures from their
  automorphism groups.
\newblock {\em Proceedings of the London Mathematical Society}, 3(2):225--249,
  1994.

\bibitem{Tsankov:12}
T.~Tsankov.
\newblock Unitary representations of oligomorphic groups.
\newblock {\em Geometric and Functional Analysis}, 22(2):528--555, 2012.

\end{thebibliography}

\end{document}